\renewcommand*{\backrefalt}[4]{%
    \ifcase #1 \footnotesize{(Not cited.)}%
    \or        \footnotesize{(Cited on page~#2.)}%
    \else      \footnotesize{(Cited on pages~#2.)}%
    \fi}
\newtheorem{theorem}{Theorem}[section]
\newtheorem{corollary}[theorem]{Corollary}
\newtheorem{lemma}[theorem]{Lemma}
\newtheorem{proposition}[theorem]{Proposition}
\newtheorem{definition}{Definition}[section]
\newtheorem{remark}[theorem]{Remark}
\newtheorem{assumption}[theorem]{Assumption}
\def\beq{\begin{equation} }
\def\eeq{\end{equation} }
\def\thetaholder{\boldsymbol{\theta}}
\def\foneholder{\boldsymbol{\varphi}_1}
\def\ftwoholder{\boldsymbol{\varphi}_2}
\def\boneholder{\boldsymbol{\delta}_1}
\def\btwoholder{\boldsymbol{\delta}_2}
\def\arbholder{\mathbf{z}}
\newcommand{\zag}{\zholder^{\text{ag}}}
\newcommand{\zmd}{\zholder^{\text{md}}}
\newcommand{\xholder}{\boldsymbol{x}}
\newcommand{\yholder}{\boldsymbol{y}}
\newcommand{\zholder}{\boldsymbol{z}}
\newcommand{\cX}{\mathcal{X}}
\newcommand{\cY}{\mathcal{Y}}
\newcommand{\cL}{\mathcal{L}}
\newcommand{\RR}{\mathbb{R}}
\newcommand{\btheta}{\bm{\theta}}
\newcommand{\norm}[1]{||#1||}
\def\btheta{\xholder}
\def\bpsi{\yholder}
\def\pnlty{\rho}
\def\rds{\mathcal{R}}
\newcommand{\AlgoName}{AGOG-Avatar }
\newcommand{\Ab}{\mathbf{A}}
\newcommand{\bz}{\bm{z}}
\newcommand{\zstar}{\zholder^*}
\newcommand{\zhp}{\bz_{k+\frac12}}
\newcommand{\zhm}{\bz_{k-\frac12}}
\newcommand{\zp}{\bz_{k+1}}
\newcommand{\ph}{k+\frac12}
\newcommand{\mh}{k-\frac12}
\def\Matrix{H}
\newcommand{\EE}{\mathbb{E}}
\newcommand{\cO}{\mathcal{O}}
\newcommand{\bphi}{\bm{\phi}}
\newcommand{\cK}{\mathcal{K}}
\newcommand{\bu}{\bm{u}}
\newcommand{\cZ}{\mathcal{Z}}
\newcommand{\bx}{\bm{x}}
\newcommand{\by}{\bm{y}}
\newcommand{\Iholder}{I}
\newcommand{\op}{o_{\raisemath{-1.5pt}\PP}}
\renewcommand{\op}{\text{op}}
\definecolor{VioletRed}{rgb}{0.915686,0.025490,0.364706}
\definecolor{LightCyan}{rgb}{0.8, 0.9, 1}
\newcommand{\xmark}{\ding{56}}
\def\normop#1{\left\Vert #1\right\Vert }%
\newenvironment{custom}[1]
  {\innercustom}
  {\endinnercustom}
\def\red#1{}\def\pb{}%\usepackage{fullpage}
\def\blue#1{\textcolor{black}{#1}}
\begin{document}

%%%%%%% TITLE PAGE %%%%%%%%%%%%%%%%%%%%%%%%%%%%%%%%%%%%%%%%%%%%%%%%%%%

\begin{center}

{\bf{\LARGE{Nesterov Meets Optimism: Rate-Optimal Separable Minimax Optimization}}}

\vspace*{.2in}
{\large{
\begin{tabular}{ccccc}
Chris Junchi Li$^{\diamond*}$
&
Angela Yuan$^{\dagger*}$ 
&
Gauthier Gidel$^\ddagger$
&
Quanquan Gu$^\dagger$
&
Michael I.~Jordan$^{\diamond, \S}$
\\
\end{tabular}
}}

\vspace*{.2in}

\begin{tabular}{c}
Department of Electrical Engineering and Computer Sciences, University of California, Berkeley$^\diamond$
\\
Department of Computer Sciences, University of California, Los Angeles$^\dagger$
\\
DIRO, Université de Montréal and Mila$^\ddagger$
\\
Department of Statistics, University of California, Berkeley$^\S$
\end{tabular}

\vspace*{.2in}

\today

\vspace*{.2in}

\let\thefootnote\relax\footnotetext{$^*$Equal contribution.}

\begin{abstract} 
We propose a new first-order optimization algorithm --- AcceleratedGradient-OptimisticGradient (AG-OG) Descent Ascent---for separable convex-concave minimax optimization. The main idea of our algorithm is to carefully leverage the structure of the minimax problem, performing Nesterov acceleration on the individual component and optimistic gradient on the coupling component. Equipped with proper restarting, we show that AG-OG achieves the optimal convergence rate (up to a constant) for a variety of settings, including bilinearly coupled strongly convex-strongly concave minimax optimization (bi-SC-SC), bilinearly coupled convex-strongly concave minimax optimization (bi-C-SC), and bilinear games. We also extend our algorithm to the stochastic setting and achieve the optimal convergence rate in both bi-SC-SC and bi-C-SC settings. AG-OG is the first single-call algorithm with optimal convergence rates in both deterministic and stochastic settings for bilinearly coupled minimax optimization problems.
\end{abstract}
\end{center}

\pb\section{Introduction}\label{sec:intro}
Optimization is the workhorse for machine learning (ML) and artificial intelligence. While many ML learning tasks can be cast as a minimization problem, there is an increasing number of ML tasks, such as generative adversarial networks (GANs)~\citep{goodfellow2020generative}, robust/adversarial training~\citep{bai2020provable, madry2017towards}, Markov games (MGs)~\citep{shapley1953stochastic}, and reinforcement learning (RL)~\citep{sutton2018reinforcement, du2017stochastic, dai2018sbeed}, that are instead formulated as a minimax optimization problem in the following form:
\begin{align}
\label{eq:Minmax_opt}
\min_{\xholder\in \cX} \max_{\yholder\in \cY}~%
\cL(\xholder,\yholder)
.
\end{align}
When $\cL(\xholder,\yholder): \cX\times\cY \rightarrow \RR$ is a smooth function that is convex in $\bx$ and concave in $\by$, we refer to this problem as a \emph{convex-concave minimax problem} (a.k.a., convex-concave saddle point problem).
In this work, we focus on designing fast or even optimal deterministic and stochastic first-order algorithms for solving convex-concave minimax problems of the form~\eqref{eq:Minmax_opt}.

Unlike in the convex minimization setting, where gradient descent is the method of choice, the gradient descent-ascent method can exhibit divergence on convex-concave objectives.
Indeed, examples show the divergence of gradient descent ascent (GDA) on bilinear objectives~\citep{liang2019interaction,gidel2018variational}.
This has led to the development of extrapolation-based methods, including the extragradient (EG) method~\citep{korpelevich1976extragradient} and the optimistic gradient descent ascent (OGDA) method~\citep{popov1980modification}, both of which can be shown to converge in the convex-concave setting.
While the EG algorithm needs to call the gradient oracle twice at each iteration, the OGDA algorithm only needs a single call to the gradient oracle~\citep{gidel2018variational, hsieh2019convergence} and therefore has a practical advantage when the gradient evaluation is expensive. We build on this line of research, aiming to attain improved, and even optimal, convergence rates via algorithms that retain the spirit of simplicity of OGDA.

We focus on a specific instance of the general minimax optimization problem, namely the separable minimax optimization problem, which is formulated as follows
\begin{align}\label{eq:Minmax_separable}
\min_{\bx \in \cX} \max_{\by \in \cY}~
\cL(\xholder, \yholder)
=
f(\xholder)
+
\Iholder(\xholder, \yholder)
-
g(\yholder)
.
\end{align}
We refer to $f(\xholder) - g(\yholder)$ as the \emph{individual component}, and $\Iholder(\xholder, \yholder)$ as the \emph{coupling component} of Problem~\eqref{eq:Minmax_separable}.
Let $f$ be $\mu_f$-strongly convex and $L_f$-smooth and $g$ be $\mu_g$-strongly convex and $L_g$-smooth.
Let $\Iholder(\xholder,\yholder)$ be convex-concave with blockwise smoothness parameters $I_{xx}, I_{xy}, I_{yy}$ where $\norm{\nabla_{xx}^2 I}_{\op} \leq I_{xx}$, $\norm{\nabla_{xy}^2 I}_{\op} \leq I_{xy}$, and $\norm{\nabla_{yy}^2 I}_{\op} \leq I_{yy}$.
Let $\Iholder(\xholder, \yholder)$ be $L_H$-smooth, and it is straightforward to observe that $L_H$ can be picked as small as $I_{xx}\lor I_{yy} + I_{xy}$.
Throughout this paper, we focus on the unconstrained problem where $\cX = \RR^n$ and $\cY = \RR^m$ unless otherwise specified in certain applications.

A notable special case of the separable minimax Problem~\eqref{eq:Minmax_separable} is the so-called \emph{bilinearly coupled strongly convex-strongly concave minimax problem} (bi-SC-SC), which has the following form:
\begin{align}
\label{eq:Minmax_bilinear}
\min_{\xholder \in \cX}
\max_{\yholder \in \cY}~
\cL(\xholder, \yholder)
\equiv
f(\xholder)
+
\xholder^\top \mathbf{B} \yholder
-
g(\yholder)
.
\end{align}
Here we take $\Iholder(\xholder, \yholder)$ as the bilinear coupling function $\xholder^\top \mathbf{B} \yholder$ and is $L_H$-smooth where $L_H$ can be picked as small as the operator norm $\|\mathbf{B}\|_{\op}$ of matrix $\mathbf{B}$.

For the general minimax optimization problem~\eqref{eq:Minmax_opt}, standard algorithms such as mirror-prox \citep{nemirovski2004prox}, EG and OGDA---when operating on the entire objective---can be shown to exhibit a complexity upper bound of $\frac{\bar{L}}{\bar{\mu}}\log\left(\frac{1}{\epsilon}\right)$ for finding an $\epsilon$-accurate solution~\citep{gidel2018variational,mokhtari2020unified}, where $\bar{L}\equiv L_f\vee L_g\vee L_H$ and $\bar{\mu}\equiv \mu_f\wedge \mu_g$.
Such a complexity is \emph{optimal} when $L_f = L_g = L_H$ and $\mu_f = \mu_g$,  since the lower-bound complexity is $\Omega(\tfrac{\bar{L}}{\bar{\mu}} \log(\tfrac{1}{\epsilon}))$~\citet{nemirovskij1983problem,azizian2020accelerating}.
However, in the general case where the strong convexity and smoothness parameters are significantly different in $\bx$ and $\by$, fine-grained convergence rates that depend on the individual strong convexity $\mu_f,\mu_g$ and smoothness parameters $L_f, L_g$ and also $I_{xx}, I_{xy}, I_{yy}$ are more desirable.
In fact, \citet{zhang2021lower} have proved the following iteration complexity lower bound for solving \eqref{eq:Minmax_separable} via any first-order algorithms under the linear span assumption:
\begin{align}\label{eq:lower bound}
\tilde{\Omega}\bigg(
\sqrt{\frac{L_f+I_{xx}}{\mu_f} + \frac{L_g+I_{yy}}{\mu_g} + \frac{I_{xy}^2}{\mu_f\mu_g}}
\bigg)
.
\end{align}

With the goal of attaining this lower bound, several efforts have been made in the setting of bi-SC-SC \eqref{eq:Minmax_bilinear} or separable SC-SC \eqref{eq:Minmax_separable}. Two notable methods are LPD~\citep{thekumparampil2022lifted} and PD-EG~\citep{jin2022sharper}, which utilize techniques from primal-dual lifting and convex conjugate decomposition. Another approach is the APDG algorithm developed by~\citet{kovalev2021accelerated}, which is based on adding an extrapolation step to the forward-backward algorithm. The work of~\citet{du2022optimal} is also closely related to our work, in the sense that it uses iterate averaging and employs scaling reduction with scheduled restarting.
However, these algorithms are either limited to the bi-SC-SC setting~\citep{kovalev2021accelerated, thekumparampil2022lifted, du2022optimal}, or are not single-call algorithms~\citep{kovalev2021accelerated, jin2022sharper, du2022optimal}.\footnote{By single call, we mean the algorithm only needs to call the (stochastic) gradient oracle of the coupling component once in each iteration of the algorithm. This is in accordance with the concept of single-call variants of extragradient in~\citet{hsieh2019convergence}. Previous work  calls $\nabla I(\xholder, \yholder)$ at least twice per iteration.} In addition, only~\citet{kovalev2021accelerated} and~\citet{du2022optimal} can be extended to the stochastic setting \citep{metelev2022decentralized}, while the extension of~\citet{thekumparampil2022lifted,jin2022sharper} to the stochastic setting remains elusive.

In this paper, we design near-optimal single-call algorithms for both deterministic and stochastic separable minimax problems~\eqref{eq:Minmax_separable}.
We focus on accelerating OGDA because of its simplicity and because it enjoys the single-call property. We show that it achieves a \emph{fine-grained}, \emph{accelerated} convergence rate with a sharp dependency on the individual Lipschitz constants.
To the best of our knowledge, this is the first presentation of a single-call algorithm that matches the best-known result for the separable minimax problem~\eqref{eq:Minmax_separable} and the lower bounds under a bi-SC-SC setting~\eqref{eq:Minmax_bilinear}, bilinearly coupled convex-strongly concave (bi-C-SC) setting (i.e., $f$ is convex but not strongly convex in \eqref{eq:Minmax_bilinear}), and the bilinear game setting (i.e., setting $f=g = 0$ in \eqref{eq:Minmax_bilinear}).

\subsection{Contributions}
We highlight our contributions as follows.
\begin{enumerate}[label=(\roman*)]
\setlength{\itemsep}{0pt}%
\item
We present a novel algorithm that blends acceleration dynamics based on the single-call OGDA algorithm for the coupling component and Nesterov's acceleration for the individual component. We refer to this new algorithm as the \emph{AcceleratedGradient-OptimisticGradient (AG-OG)} Descent Ascent algorithm.
\blue{Using a scheduled restarting, we derive an \emph{AcceleratedGradient OptimisticGradient with restarting} (AG-OG with restarting) algorithm that achieves a sharp convergence rate in a variety of settings.}
\blue{We provide theoretical analysis of our algorithm for general separable SC-SC problem~\eqref{eq:Minmax_separable} and compare the results with existing literature under special cases in the form of~\eqref{eq:Minmax_bilinear} (bi-SC-SC, bi-C-SC and Bilinear).}

\item
Using a scheduled restarting, we derive an \emph{AcceleratedGradient-OptimisticGradient with restarting} (AG-OG with restarting) algorithm that achieves a sharp convergence rate in a variety of settings.
For general separable SC-SC setting in~\eqref{eq:Minmax_separable}, our algorithm achieves a complexity of $\left(\sqrt{\frac{L_f}{\mu_f}\vee \frac{L_g}{\mu_g}} + \frac{I_{xx}}{\mu_f}\vee \frac{I_{xy}}{\sqrt{\mu_f\mu_g}} \vee \frac{I_{yy}}{\mu_g}\right)
\log\left(\frac{1}{\epsilon}\right)$, matching the best known upper bound in~\citet{jin2022sharper}.
For the setting of bilinearly coupled SC-SC in \eqref{eq:Minmax_bilinear}, our algorithm achieves a complexity of $
\cO\Big(\sqrt{\frac{L_f}{\mu_f}\lor\frac{L_g}{\mu_g}} + \sqrt{\frac{\normop{\mathbf{B}}}{\mu_f\mu_g}}\Big)\log\left(\frac{1}{\epsilon}\right)
$ [Corollary~\ref{corr:determ_complexity}], which matches the lower bound established by~\citet{zhang2021lower}.
For bi-C-SC, we prove a
$
\cO\bigg(\sqrt{\frac{L_f}{\epsilon} \vee \frac{L_g}{\mu_g}} + \frac{\|\mathbf{B}\|_{\op}}{\sqrt{\epsilon\mu_g}}\bigg)\log\left(\frac{1}{\epsilon}\right)
$
complexity [Theorem~\ref{theo:C-SC}], %\todoq{add the theorem number here},
which matches that of \citet{thekumparampil2022lifted} and is also optimal.

\item
In the stochastic setting where the algorithm can only query a stochastic gradient oracle with bounded noise, we propose a stochastic extension of AG-OG with restarting and establish a sharp convergence rate.
For both bi-SC-SC and bi-C-SC settings, the convergence rate of our algorithm is near-optimal in the sense that its bias error matches the respective deterministic lower bound and its variance error matches the statistical minimax rate, i.e., $\frac{\sigma^2}{\mu_f^2\epsilon^2}$ [Corollary~\ref{corr:stocha_complexity}].

\item
In the special case of the bilinear game (when $f = g = 0$ in \eqref{eq:Minmax_bilinear}), our algorithm has a complexity of $\Omega\left(
\frac{\|\mathbf{B}\|_{\op}}{\sqrt{\lambda_{\min}(\mathbf{B}^\top \mathbf{B})}}
\right)\log\left( \frac{1}{\epsilon}\right)$ [Theorem~\ref{theo:main_bilinear}], which matches the lower bound established by~\citet{ibrahim2020linear}. %This is an improvement over the standard approach, which has a complexity of $\tilde{\cO}\left( \frac{\lambda_{\max}(\mathbf{B}^\top \mathbf{B})}{\lambda_{\min}}\right)$. Furthermore, when applied to the bilinear game, our work improves upon~
Note that prior work \citep{kovalev2021accelerated,thekumparampil2022lifted,jin2022sharper} cannot achieve the optimal rate when applied to bilinear games, which is an unique advantage of our algorithm. 
\end{enumerate}

A summary of the iteration complexity comparisons with the state-of-the-art methods can be found in Table~\ref{table:11}.

\begin{table*}[!tb]
\centering
\resizebox{\textwidth}{!}{%
\begin{tabular}{cccccccc}
\toprule
\diagbox{Method}{Setting}
& SC-SC  &  bi-SC-SC     & Bilinear   & bi-C-SC & \begin{tabular}{@{}c@{}}Stochastic\\ Rate\end{tabular} & \begin{tabular}{@{}c@{}}Single \\ Call\end{tabular}
\\
\midrule
OGDA & & & &
\\
\citep{mokhtari2020convergence}
%\small{\citep{yang2019sample,jin2020provably}} 
& \multirow{-2}{*}{$\tilde{\cO}\bigg(\frac{L_f'\vee L_g'\vee I_{xy}}{\mu_f \wedge \mu_g}\bigg)$}
&\multirow{-2}{*}{$\tilde{\cO}\bigg(\frac{L_f\lor L_g\lor \norm{\mathbf{B}}}{\mu_f \wedge \mu_g}\bigg)$}
& \multirow{-2}{*}{$\tilde{\cO}\bigg(\frac{\norm{\mathbf{B}}^2}{\lambda_{\min}}\bigg)$}
& \multirow{-2}{*}{$\cO\bigg(\frac{L_f\lor L_g\lor \norm{\mathbf{B}}}{\epsilon}\bigg)$}
&\multirow{-2}{*}{$\pmb{\color{OliveGreen} \checkmark}$}
&\multirow{-2}{*}{$\pmb{\color{OliveGreen} \checkmark}$}
\\
\midrule
Proximal Best Response
\\
\citep{wang2020improved}
& \multirow{-2}{*}{$\tilde{\cO}\bigg(
\sqrt{\frac{L_f'}{\mu_f} \vee \frac{L_g'}{\mu_g}}
+
\sqrt{\frac{I_{xy} (L_f'\lor L_g'\lor I_{xy})}{\mu_f\mu_g}}
\bigg)$}
&\multirow{-2}{*}{$\tilde{\cO}\bigg(
\sqrt{\frac{L_f}{\mu_f} \vee \frac{L_g}{\mu_g}}
+
\sqrt{\frac{\norm{\mathbf{B}}(L_f\lor L_g\lor \norm{\mathbf{B}})}{\mu_f\mu_g}}
\bigg)$}
& \multirow{-2}{*}{---}
& \multirow{-2}{*}{---}
&\multirow{-2}{*}{{\color{red} {\color{red} \xmark}}}
& \multirow{-2}{*}{{\color{red} {\color{red} \xmark}}}\\
\midrule 
DIPPA & & & &\\
\citep{xie2021dippa}
& \multirow{-2}{*}{---}
&\multirow{-2}{*}{$\tilde{\cO}\bigg(\bigg(\frac{L_fL_g}{\mu_f\mu_g}\bigg( \frac{L_f}{\mu_f} \vee \frac{L_g}{\mu_g}\bigg)\bigg)^{\frac14} + \frac{\norm{\mathbf{B}}}{\sqrt{\mu_f\mu_g}}\bigg)$}
& \multirow{-2}{*}{---}
& \multirow{-2}{*}{---} &\multirow{-2}{*}{{\color{red} {\color{red} \xmark}}}
& \multirow{-2}{*}{{\color{red} {\color{red} \xmark}}}
\\
\midrule 
LPD & & & &\\
\citep{thekumparampil2022lifted}
& \multirow{-2}{*}{---} &\multirow{-2}{*}{$\tilde{\cO}\bigg(\sqrt{\frac{L_f}{\mu_f} \vee \frac{L_g}{\mu_g}} + \frac{\norm{\mathbf{B}}}{\sqrt{\mu_f\mu_g}}\bigg)$}
& \multirow{-2}{*}{$\tilde{\cO}\bigg(\frac{\norm{\mathbf{B}}^2}{\lambda_{\min}}\bigg)$}
& \multirow{-2}{*}{$\tilde{\cO}\bigg(\sqrt{\frac{L_f}{\epsilon} \vee \frac{L_g}{\mu_g}} + \frac{\|\mathbf{B}\|}{\sqrt{\epsilon\mu_g}}\bigg)$}
&\multirow{-2}{*}{{\color{red} \xmark}}
& \multirow{-2}{*}{$\pmb{\color{OliveGreen} \checkmark}$}\\
\midrule 
APDG & & & &\\
\begin{tabular}{@{}c@{}}\citep{kovalev2021accelerated} \\ \citep{metelev2022decentralized}\end{tabular}
& \multirow{-2}{*}{---} &\multirow{-2}{*}{$\tilde{\cO}\bigg(\sqrt{\frac{L_f}{\mu_f} \vee \frac{L_g}{\mu_g}} + \frac{\norm{\mathbf{B}}}{\sqrt{\mu_f\mu_g}}\bigg)$}
& \multirow{-2}{*}{$\tilde{\cO}\bigg(\frac{\norm{\mathbf{B}}^2}{\lambda_{\min}}\bigg)$}
& \multirow{-2}{*}{$\tilde{\cO}\bigg(
\sqrt{\frac{L_fL_g}{\lambda_{\min}}}
\vee
\frac{\norm{\mathbf{B}}}{\sqrt{\lambda_{\min}}}\sqrt{\frac{L_g}{\mu_g}}
\vee
\frac{\norm{\mathbf{B}}^2}{\lambda_{\min}}\bigg)$}
&\multirow{-2}{*}{$\pmb{\color{OliveGreen} \checkmark}$}
& \multirow{-2}{*}{{\color{red} \xmark}}\\
\midrule 
PD-EG & & & &\\
\citep{jin2022sharper}
& \multirow{-2}{*}{$\tilde{\cO}\bigg(\sqrt{\frac{L_f}{\mu_f} \vee \frac{L_g}{\mu_g}} + \frac{I_{xx}}{\mu_f} \vee \frac{I_{xy}}{\sqrt{\mu_f\mu_g}} \vee \frac{I_{yy}}{\mu_g}\bigg)$} 
&\multirow{-2}{*}{$\tilde{\cO}\bigg(\sqrt{\frac{L_f}{\mu_f} \vee \frac{L_g}{\mu_g}} + \frac{\norm{\mathbf{B}}}{\sqrt{\mu_f\mu_g}}\bigg)$}
& \multirow{-2}{*}{$\tilde{\cO}\bigg(\frac{\norm{\mathbf{B}}^2}{\lambda_{\min}}\bigg)$}
& \multirow{-2}{*}{---}&\multirow{-2}{*}{{\color{red} \xmark}}
& \multirow{-2}{*}{{\color{red} \xmark}}\\
\midrule 
EG+Momentum & & & &\\
\citep{azizian2020accelerating} 
& \multirow{-2}{*}{---} 
&\multirow{-2}{*}{---}
& \multirow{-2}{*}{$\tilde{\cO}\bigg(\frac{\norm{\mathbf{B}}}{\sqrt{\lambda_{\min}}}\bigg)$}
& \multirow{-2}{*}{---}&\multirow{-2}{*}{{\color{red} \xmark}}
& \multirow{-2}{*}{{\color{red} \xmark}}\\
\midrule 
SEG with Restarting& & & &\\
\citep{li2022convergence}
& \multirow{-2}{*}{---} 
&\multirow{-2}{*}{---}
& \multirow{-2}{*}{$\tilde{\cO}\bigg(\frac{\norm{\mathbf{B}}}{\sqrt{\lambda_{\min}}}\bigg)$}
& \multirow{-2}{*}{---}&\multirow{-2}{*}{$\pmb{\color{OliveGreen} \checkmark}$}
& \multirow{-2}{*}{{\color{red} \xmark}}\\
\midrule 
AG-EG with Restarting
\\\citep{du2022optimal}
& \multirow{-2}{*}{---} 
&\multirow{-2}{*}{$\tilde{\cO}\bigg(\sqrt{\frac{L_f}{\mu_f} \vee \frac{L_g}{\mu_g}} + \frac{\norm{\mathbf{B}}}{\sqrt{\mu_f\mu_g}}\bigg)$}
& \multirow{-2}{*}{$\tilde{\cO}\bigg(\frac{\norm{\mathbf{B}}}{\sqrt{\lambda_{\min}}}\bigg)$}
& \multirow{-2}{*}{---}
&  \multirow{-2}{*}{$\pmb{\color{OliveGreen} \checkmark}$}
& \multirow{-2}{*}{{\color{red} \xmark}}
\\
\midrule 
\rowcolor{LightCyan}
AG-OG with Restarting & & & & & &
\\
\rowcolor{LightCyan}
(this work)
& \multirow{-2}{*}{$\tilde{\cO}\bigg(\sqrt{\frac{L_f}{\mu_f} \vee \frac{L_g}{\mu_g}} + \frac{I_{xx}}{\mu_f} \vee \frac{I_{xy}}{\sqrt{\mu_f\mu_g}} \vee \frac{I_{yy}}{\mu_g}\bigg)$}
&\multirow{-2}{*}{$\tilde{\cO}\bigg(\sqrt{\frac{L_f}{\mu_f} \vee \frac{L_g}{\mu_g}} + \frac{\norm{\mathbf{B}}}{\sqrt{\mu_f\mu_g}}\bigg)$}
& \multirow{-2}{*}{$\tilde{\cO}\bigg(\frac{\norm{\mathbf{B}}}{\sqrt{\lambda_{\min}}}\bigg)$}
& \multirow{-2}{*}{$\tilde{\cO}\bigg(\sqrt{\frac{L_f}{\epsilon} \vee \frac{L_g}{\mu_g}} + \frac{\|\mathbf{B}\|}{\sqrt{\epsilon\mu_g}}\bigg)$}
& \multirow{-2}{*}{$\pmb{\color{OliveGreen} \checkmark}$}
& \multirow{-2}{*}{$\pmb{\color{OliveGreen} \checkmark}$}
\\
\midrule 
Lower Bound & & & &\\
\begin{tabular}{@{}c@{}}\citep{zhang2021lower} \\ \citep{ibrahim2020linear}\end{tabular}
& \multirow{-2}{*}{$\tilde{\Omega}\bigg(\sqrt{\frac{L_f'}{\mu_f} \vee \frac{L_g'}{\mu_g}} + \frac{I_{xy}}{\sqrt{\mu_f\mu_g}}\bigg)$}
&\multirow{-2}{*}{$\tilde{\Omega}\bigg(\sqrt{\frac{L_f}{\mu_f} \vee \frac{L_g}{\mu_g}} + \frac{\norm{\mathbf{B}}}{\sqrt{\mu_f\mu_g}}\bigg)$}
& \multirow{-2}{*}{$\tilde{\Omega}\bigg(\frac{\norm{\mathbf{B}}}{\sqrt{\lambda_{\min}}}\bigg)$}
& \multirow{-2}{*}{\blue{$\tilde{\cO}\bigg(\sqrt{\frac{L_f}{\epsilon} \vee \frac{L_g}{\mu_g}} + \frac{\|\mathbf{B}\|}{\sqrt{\epsilon\mu_g}}\bigg)$}}
& \multirow{-2}{*}{---}
& \multirow{-2}{*}{---}
\\
%\midrule
\bottomrule\end{tabular}
}
\caption{%
We present a comparison of the first-order gradient complexities of our proposed algorithm with selected prevailing algorithms for solving bilinearly-coupled minimax problems.
The comparison includes several cases such as general SC-SC, bilinear games, bi-SC-SC (bilinearly-coupled SC-SC), and the bi-C-SC cases.
We denote $\lambda_{\min}\equiv \lambda_{\min}(\mathbf{B}^\top \mathbf{B})$, $L_f'\equiv L_f+I_{xx}$ and $L_g'\equiv L_g+I_{yy}$.
We focus on comparing the gradient complexities of deterministic algorithms, and include a column to indicate whether the stochastic case has been discussed.
The row in blue background is the convergence result presented in this paper.
The "---" indicates that the complexity does not apply to the given case.
}\label{table:11}
\end{table*}

\pb\subsection{More Related Work}\label{sec:related_work}
\paragraph{Deterministic Case.}
Much attention has been paid to obtaining linear convergence rates for gradient-based methods applied to games in the context of strongly monotone operators (which is implied by strong convex-concavity)~\citep{mokhtari2020unified} and several recent works~\citep{yang2020catalyst,zhang2021complexity,cohen2020relative,wang2020improved,xie2021dippa} have bridged the gap with the lower bound provided for \emph{unbalanced} strongly-convex-strongly-concave objective. 
There has been a series of papers along this direction~\citep{mokhtari2020unified, cohen2020relative, lin2020near, wang2020improved, xie2021dippa}, and only very recently have optimal results that reach the lower bound been presented~\citep{kovalev2021accelerated,thekumparampil2022lifted,jin2022sharper}.
This work presented improved methods leveraging convex duality.
Among these works, only~\citet{jin2022sharper} considers non-bilinear coupling terms, and only~\citet{thekumparampil2022lifted} considers single gradient calls.
Note that~\citet{jin2022sharper} consider a finite-sum case, which differs from our setting of a general expectation.
\citet{kovalev2021accelerated, thekumparampil2022lifted} focus solely on the deterministic setting, and \citet{metelev2022decentralized} present a stochastic version of APDG algorithm~\citep{kovalev2021accelerated} and its extension to a decentralized setting, which is comparable and concurrent with the work of \citet{du2022optimal}.

\paragraph{Stochastic Case.}
There exists a rich literature on stochastic variational inequalities with application to solving stochastic minimax problems~\citep{juditsky2011solving,hsieh2019convergence,chavdarova2019reducing,alacaoglu2022stochastic,zhao2022accelerated,beznosikov2022smooth}. However, only a few works have proposed fine-grained bounds suited to the (bi-)SC-SC setting. To the best of our knowledge, most fine-grained bounds have been proposed in the finite-sum setting~\citep{palaniappan2016stochastic,jin2022sharper} or in the proximal-friendly case~\citep{zhang2021robust}. Two closely related works are \citet{li2022convergence}, who provide a convergence rate for stochastic extragradient method in the purely bilinear setting and~\citet{du2022optimal}, who study an accelerated version of extragradient, dubbed as AcceleratedGradient-ExtraGradient (AG-EG) in the bi-SC-SC setting. 
Our work is in the same vein as~\citet{du2022optimal} but instead employs the optimistic gradient instead of extragradient to handle the bilinear coupling component.
Optimistic-gradient-based methods have been considered extensively in the literature due to their need for fewer gradient oracle calls per iteration than standard extragradient and their applicability to the online learning setting~\citep{golowich2020tight}. 
Note that, in general, EG and OG methods  share some similarities in their analyses, but there are also significant differences~\citep[\S 3.1]{golowich2020tight},~\citep[\S2]{gorbunov2022last}.
Specifically in our case, using a \emph{single-call algorithm} that reuses previously calculated gradients alters a key recursion (Eq.~\eqref{eq:bound_diff}).
Although the main part of the proof follows the standard path of estimating Nesterov's acceleration terms first, an additional squared error norm involving the previous iterates is present, intrinsically implying an additional iterative rule (Eq.~\eqref{eq:OGDA_recurs}) in place of the original iterative rule that is essential for proving boundedness of the iterates.
In addition, due to the accumulated error across iterates, the maximum stepsize allowed in single-call algorithms is forced to be smaller. We believe that this is not an artifact of our analysis but is a general feature of OG methods.%
\footnote{Limited by space, we refer readers to~\S\ref{sec:proof_theo_convAG-OG} and~\S\ref{sec:proof_theo_stoc_conv_AG-OG} for technical details.}

\paragraph{Organization.}
The rest of this work is organized as follows.
\S\ref{sec:prelim} introduces the basic settings and assumptions necessary for our algorithm and theoretical analysis.
Our proposed AcceleratedGradient-OptimisticGradient (AG-OG) Descent Ascent algorithm is formally introduced in \S\ref{sec:AG-OG-Main} and further generalized to Stochastic AcceleratedGradient-OptimisticGradient (S-AG-OG) Descent Ascent in \S\ref{sec:S-AG-OG}.
We present our conclusions in \S\ref{sec:conclu}.
Due to space limitations, we defer all proof details along with results of numerical experiments to the supplementary materials.

\paragraph{Notation.}
For two sequences of positive scalars $\{a_n\}$ and $\{b_n\}$, we denote $a_n = \Omega(b_n)$ (resp.~$a_n = \cO(b_n)$) if $a_n \ge C b_n$ (resp.~$a_n \le C b_n$) for all $n$, and also $a_n = \Theta(b_n)$ if both $\Omega(b_n)$ and $a_n = \cO(b_n)$ hold, for some absolute constant $C > 0$, and $\tilde{\cO}$ or $\tilde{\Omega}$ is adopted in turn when $C$ contains a polylogarithmic factor in problem-dependent parameters.
Let $\lambda_{\max}(\Ab)$ and $\lambda_{\min}(\Ab)$ denote the maximal and minimal eigenvalues of a real symmetric matrix $\Ab$, and $\normop{\Ab}$ the operator norm $\sqrt{\lambda_{\max}(\Ab^\top\Ab)}$.
Let vector $\bz = [\xholder; \yholder]\in\RR^{n+m}$ denote the concatenation of $\xholder\in\RR^n$, $\yholder\in\RR^m$.
We use $\wedge$ (resp.~$\vee$) to denote the bivariate $\min$ (resp.~$\max$) throughout this paper.
For natural number $K$ let $[K]$ denote the set $\{1,\dots,K\}$.
Throughout the paper we also use the standard notation $\norm{\cdot}$ to denote the $\ell_2$-norm \blue{and $\normop{\cdot}$ to denote the operator norm of a matrix}.
We will explain other notations at their first appearances.

\section{Preliminaries}\label{sec:prelim}
In minimax optimization the goal is to find an (approximate) Nash equilibrium (or minimax point) of problem~\eqref{eq:Minmax_opt} (or~\eqref{eq:Minmax_separable}), defined as a pair $[\xholder^*; \yholder^*] \in \cX \times \cY$ satisfying:
\begin{align*}
\cL(\xholder^*, \yholder)
\leq 
\cL(\xholder^*, \yholder^*)
\leq
\cL(\xholder, \yholder^*)
.
\end{align*}
In order to analyze first-order gradient methods for this problem, we assume access to the gradients of the objective $\nabla_{\xholder}\cL(\xholder, \yholder)$ and $\nabla_{\yholder}\cL(\xholder, \yholder)$.
Finding the minimax point of the original convex-concave optimization problem~\eqref{eq:Minmax_opt} and~\eqref{eq:Minmax_separable} reduces to finding the point where the gradients vanish.
Accordingly, we use $W$ to denote the gradient vector field and $\bz = [\xholder; \yholder]\in\RR^{n+m}$:
\begin{align}\label{eq:grad_field}
W(\bz)
:=
\begin{pmatrix}
\nabla_{\xholder} \cL(\xholder, \yholder)
\\
-\nabla_{\yholder} \cL(\xholder, \yholder)
\end{pmatrix}
=
\begin{pmatrix}
\nabla f(\xholder) +  \nabla_{\bx}I(\bx, \by
)\\
-\nabla_{\by}I(\bx, \by
) + \nabla g(\yholder)
\end{pmatrix}
.
\end{align}
Based on this formulation, our goal is to find the stationary point of the vector field correponding to the monotone operator $W(\zholder)$, namely a point $\zholder^* = [\xholder^*; \yholder^*] \in \RR^{n + m}$ satisfying (in the unconstrained case) $W(\zholder^*) = 0$, which is referred to as the \emph{variational inequality (VI) formulation} of minimax optimization~\citep{gidel2018variational}.
The compact representation of the convex-concave minimax problem as a VI allows us to simplify the notation.

In the vector field~\eqref{eq:grad_field}, there are individual components that point along the direction optimizing $f, g$ individually, and a coupling component which corresponds to the gradient vector field of a separable minimax problem.
For the individual component, we let $
F(\zholder) := f(\xholder) + g(\yholder)
$ and correspondingly $
\nabla F(\zholder)
=
\left[
\nabla f(\xholder)
;
\nabla g(\yholder)
\right]
$.
For the coupling component, we define the operator $
\Matrix(\zholder) = 
\left[
\nabla_{\bx}I(\bx, \by)
;
-\nabla_{\by}I(\bx, \by)
\right]
$.
Note that the representation allows us to write $W(\zholder)$ as the summation of the two vector fields: $W(\zholder) = \nabla F(\bz) + H(\bz)$.

We introduce our main assumptions as follows:
\begin{assumption}[Convexity and Smoothness]\label{assu:convex_smooth}
We assume that $f(\cdot):\RR^n \rightarrow \RR$ is $\mu_f$-strongly convex and $L_f$-smooth, $g(\cdot): \RR^m \rightarrow \RR$ is $\mu_g$-strongly convex and $L_g$-smooth, and $I(\bx, \by)$ is convex-concave with blockwise smoothness parameters $L_H = I_{xx}\lor I_{yy} + I_{xy}$.
\end{assumption}
This implies that $F(\bz)$ is $(L_f\lor L_g)$-smooth and $(\mu_f\land\mu_g)$-strongly convex.
In addition $H(\cdot)$ is monotone, yielding the property that for all $\zholder, \zholder'\in \RR^{n + m}$:
\begin{align}\label{eq:H_prop}
\left\langle
H(\bz) - H(\bz')
,
\bz - \bz'
\right\rangle
\geq
0
.
\end{align}
The above assumption adds convexity and smoothness constraints to the individual components $f(\xholder)$ and $g(\yholder)$.
In addition, for the coupling component $\xholder^\top \mathbf{B} \yholder$ in the separable minimax problem~\eqref{eq:Minmax_separable}, without loss of generality, we assume that $\mathbf{B} \in \RR^{n\times m}, n \geq m >0$ is a tall matrix.
Note that as $\xholder$ and $\yholder$ are exchangeable, tall matrices cover all circumstances.

In the stochastic setting, we assume access to an unbiased stochastic oracle $\tilde{H}(\bz; \zeta)$ of $H(\bz)$ and an unbiased stochastic oracle $\nabla \tilde{F}(\bz; \xi)$ of $\nabla F(\bz)$. Furthermore, we consider the case where the variances of such stochastic oracles are bounded:

\begin{assumption}[Bounded Variance]\label{assu:bounded_variance}
We assume that the stochastic gradients admit bounded second moments $\sigma_H^2, \sigma_F^2 \ge 0$:
\begin{align*}
\EE_{\xi} \left[
\norm{ \tilde{H}(\bz; \zeta) - H(\bz)}^2
\right] \leq \sigma_H^2
,
\qquad
\EE_{\zeta} \left[
\norm{\nabla \tilde{F}(\bz; \xi) - \nabla F(\bz)}^2
\right] \leq \sigma_F^2
.
\end{align*}
\end{assumption}
For ease of exposition, we introduce the overall variance $\sigma^2 = 3\sqrt{2} \sigma_H^2 + 2\sigma_F^2$.
Note that the noise variance bound assumption is common in the stochastic optimization literature.%
\footnote{We leave the generalization to models of unbounded noise to future work.}
Under the above assumptions, our goal is to find an \emph{$\epsilon$-optimal minimax point}, a notion defined as follows.
\begin{definition}[$\epsilon$-Optimal Minimax Point]\label{defi_nearoptimal}
$[\xholder;\yholder]\in \cX\times\cY$ is called an \emph{$\epsilon$-optimal minimax point} of a convex-concave function $\cL(\xholder,\yholder)$ if $\|\xholder - \xholder^*\|^2 + \|\yholder - \yholder^*\|^2\le \epsilon^2$.
\end{definition}
It is obvious that when the accuracy $\epsilon=0$, $[\xholder;\yholder]$ is an (exact) optimal minimax point of $\cL(\xholder,\yholder)$.

\pb\section{AcceleratedGradient OptimisticGradient Descent Ascent}\label{sec:AG-OG-Main}
In this section, we discuss key elements of our algorithm design---consisting of \emph{OptimisticGradient Descent-Ascent} (OGDA) and \emph{Nesterov's acceleration method}---that together solve the separable minimax problem.
Such an approach allows us to demonstrate the main properties of our approach that will eventually guide our analysis in the discrete-time case.
In~\S\ref{sec:OGDA} and~\S\ref{sec:Nesterov} we review OGDA and Nesterov's acceleration.
In~\S\ref{sec:AG-OG} we present our approach to accelerating OGDA for bilinear minimax problems, yielding the Accelerated Gradient-Optimistic Gradient (AG-OG) algorithm, and we prove its convergence.
Finally in~\S\ref{sec:AG-OG_with_restarting} we show that proper restarting on top of the AG-OG algorithm achieves a sharp convergence rate that matches the lower bound of \citet{zhang2021lower}.

\pb\subsection{Optimistic Gradient Descent Ascent}\label{sec:OGDA}
The OptimisticGradient Descent Ascent (OGDA) algorithm has received considerable attention in the recent literature, especially for the problem of training Generative Adversarial Networks (GANs)~\citep{goodfellow2020generative}.
In the general variational inequality setting, the iteration of OGDA takes the following form~\citep{popov1980modification}:
\begin{align}
&
\zholder_{k+\frac12}=\zholder_k-\eta W(\zholder_{k-\frac12})
,\qquad 
\zholder_{k+1}=\zholder_k-\eta W(\zholder_{k+\frac12})
.\label{eq:OGDA_single}
\end{align}
Note that at step $k$, the scheme performs a gradient descent-ascent step at the \emph{extrapolated point} $\zhp$.
Equivalently, with simple algebraic modification~\eqref{eq:OGDA_single} can be written in a standard form~\citep{gidel2018variational}:
\begin{align}\label{eq:OGDA_oneline}
\zholder_{k+\frac12}
=
\zholder_{k-\frac12}
-
2\eta W(\zholder_{k-\frac12})
+
\eta W(\zholder_{k-\frac32})
.
\end{align}
Treating the difference $W(\zhm) - W(\bz_{t-\frac32})$ as a prediction of the future one $W(\zhp) - W(\zhm)$, this update rule can be viewed as an approximation of the implicit \emph{proximal point (PP) method}:
$$
\zholder_{k+\frac12}
=
\zholder_{k-\frac12}
-
\eta W(\zholder_{k+\frac12}).
$$
Another popular tractable approximation of the PP method is the extragradient (EG) method~\citep{korpelevich1976extragradient}:
Although conceptually similar to OGDA~\eqref{eq:OGDA_single}, EG requires two gradient queries per iteration and hence doubles the overall number of gradient computations.
Both OGDA and EG dynamics~\eqref{eq:OGDA_single} alleviate  cyclic behavior by extrapolation from the past and exhibit a complexity of $\cO(L/\mu \log(1/\epsilon))$~\citep{gidel2018variational, mokhtari2020unified} in general setting~\eqref{eq:Minmax_opt} with $L$-smooth, $\mu$-strongly-convex-$\mu$-strongly-concave objectives.\footnote{In fact an analogous result holds true for general smooth, strongly monotone variational inequalities~\citep{mokhtari2020unified}.}

\pb\subsection{Nesterov's Acceleration Scheme}\label{sec:Nesterov}
Turning to the minimization problem, while vanilla gradient descent enjoys an iteration complexity of $\cO(\kappa \log(1/\epsilon))$ on $L$-smooth, $\mu$-strongly convex problems, with $\kappa=L/\mu$ being the condition number,
Nesterov's method~\citep{nesterov1983method}, when equipped with proper restarting, achieves an improved iteration complexity of $\cO(\sqrt{\kappa}\log(1/\epsilon))$.
We adopt the following version of the Nesterov acceleration, known as the ``second scheme''~\citep{tseng2008accelerated, lin2020accelerated}:
\begin{subequations}\label{eq:nesterov_second}
\begin{numcases}{}
\zmd_k
&=
$
\frac{k}{k + 2} \zag_k + \frac{2}{k + 2} \zholder_k
$, \label{eq:extra_step} \\
\bz_{k+1}
&=
$
\bz_k - \eta_k \nabla F(\zmd_{k})
$, \label{eq:grad_step}            \\
\zag_{k+1}
&=
$
\frac{k}{k + 2} \zag_{k} + \frac{2}{k + 2} \bz_{k+1}
$.
\label{eq:avg_step}
\end{numcases}
\end{subequations}
Subtracting~\eqref{eq:extra_step} from~\eqref{eq:avg_step} and combining the resulting equation with~\eqref{eq:grad_step}, we conclude
\begin{align}
&\zag_{k+1} - \zmd_{k}
=
\frac{2}{k + 2} (\zholder_{k+1} - \zholder_{k})
=
-\frac{2\eta_k}{k+2} \nabla F(\zmd_{k})\notag
\\&\Rightarrow \qquad 
\zag_{k+1}
=
\zmd_{k} - \frac{2\eta_k}{k+2} \nabla F(\zmd_{k})
.
\label{eq:Nesterov_1}
\end{align}
Moreover, shifting the index forward by one in~\eqref{eq:extra_step} and combining it with~\eqref{eq:avg_step} to cancel the $\bz_{k+1}$ term, we obtain
\begin{align}
&\frac{k + 2}{k + 3} \zag_{k+1}
-
\zmd_{k+1}
= 
\frac{k}{k + 3} \zag_{k}
-
\frac{k+1}{k + 3} \zag_{k+1}
\\&\Rightarrow \qquad 
\zmd_{k+1}
=
\zag_{k+1} + \frac{k}{k + 3} \left( 
\zag_{k+1} - \zag_{k}
\right)
.\label{eq:Nesterov_2}
\end{align}
Thus, by a simple notational transformation,  \eqref{eq:Nesterov_1} plus~\eqref{eq:Nesterov_2} (and hence the original update rule~\eqref{eq:nesterov_second}) is exactly equivalent to the original updates of Nesterov's acceleration scheme~\citep{nesterov1983method}.
Here, $\zag_k$ denotes a \emph{$\frac{2}{k}$-weighted-averaged} iteration.
In other words, compared with vanilla gradient descent, $
\zholder_{k+1} = \zholder_{k} - \eta_k \nabla F(\zholder_{k})
$, Nesterov's acceleration conducts a step at the negated gradient direction evaluated at a \emph{predictive iterate} of the weighted-averaged iterate of the sequence.
This enables a larger choice of stepsize, reflecting the enhanced stability.
An analogous interpretation has been discussed in work on a heavy-ball-based acceleration method \citep[\S 1.3]{sebbouh2021almost}.

\pb\subsection{Accelerating OGDA on Separable Minimax Problems}\label{sec:AG-OG}
In this subsection and~\S\ref{sec:AG-OG_with_restarting}, we show that an organic combination of the two algorithms in~\S\ref{sec:OGDA} and~\S\ref{sec:Nesterov} achieves improved convergence rates and when equipped with scheduled restarting, obtains a sharp iteration complexity that matches~\citet{jin2022sharper} while only requiring a single gradient call per iterate.
Our algorithm is shown in Algorithm~\ref{alg:AG_OG}.
In Line~\ref{line:mdupd} and~\ref{line:agupd} the update rules of the evaluated point and the extrapolated point of $f$ follow that in~\eqref{eq:extra_step} and~\eqref{eq:avg_step}, while in Lines~\ref{line:halfupd} and~\ref{line:kupd} the updates follow the OGDA dynamics~\eqref{eq:OGDA_single} with each step modified by~\eqref{eq:grad_step}.
Algorithm~\ref{alg:AG_OG} can be seen as a synthesis of OGDA and Nesterov's acceleration, as it reduces to OGDA when $\nabla F = 0$ and to Nesterov's accelerated gradient when $H = 0$.

\begin{algorithm}[!t]
\caption{AcceleratedGradient-OptimisticGradient (AG-OG)$(\zag_0, \bz_0, \bz_{-1/2}, K)$}
\begin{algorithmic}[1]
\FOR{$k=0, 1,\dots,K-1$}
\STATE 
$
\zmd_{k}
=
(1 - \alpha_{k}) \zag_{k} + \alpha_{k} \bz_{k}
$\label{line:mdupd}
\STATE 
$\bz_{k + \frac12}
= 
\bz_{k} - \eta_k \left(H(\bz_{k-\frac12}) + \nabla F(\zmd_{k}) \right)
$\label{line:halfupd}
\STATE
$
\zag_{k+1}
=
(1 - \alpha_k) \zag_{k} + \alpha_k \bz_{k + \frac12}
$ \label{line:agupd}
\STATE 
$
\bz_{k+1} = 
\bz_{k} - \eta_k \left( H(\bz_{k + \frac12}) + \nabla F(\zmd_{k}) \right)
$\label{line:kupd}
\ENDFOR
\STATE {\bfseries Output:}
$\zag_K$
\end{algorithmic}
\label{alg:AG_OG}
\end{algorithm}

For theoretical analysis, we first state a nonexpansiveness lemma of $\bz_k$ with respect to $\zstar$, the unique solution to Problem~\eqref{eq:Minmax_separable}.
The proof is presented in~\S\ref{sec:proof_lem_deter_bound}.

\begin{lemma}[Nonexpansiveness]\label{lem:deter_bound}
Under Assumptions~\ref{assu:convex_smooth}, we set the parameters as $L = L_f \vee L_g$, $L_H = I_{xx}\lor I_{yy} + I_{xy}$, $\eta_k = \frac{k + 2}{2L + \sqrt{3+\sqrt{3}} L_H(k + 2)}$ and $\alpha_k = \frac{2}{k+2}$ in Algorithm~\ref{alg:AG_OG} \blue{and choose initialization $\bz_{-\frac12}=\zag_0=\bz_0$}, at any iterate $k < K$ we have
\begin{align*}
\norm{\bz_k - \zstar} 
\leq 
\norm{\bz_0 - \zstar}
.
\end{align*}
\end{lemma}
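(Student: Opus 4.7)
The plan is to prove nonexpansiveness by induction on $k$, bootstrapping from a single-step energy inequality that is strong enough to yield a non-increasing Lyapunov potential, rather than directly trying to bound $\|\bz_{k+1}-\zstar\|^2$ by $\|\bz_k-\zstar\|^2$ (which need not hold term by term, since the stepsize $\eta_k$ grows with $k$). First, I would expand the square using Line~\ref{line:kupd}:
\begin{equation*}
\|\bz_{k+1}-\zstar\|^2 = \|\bz_k-\zstar\|^2 - 2\eta_k\langle H(\bz_{k+\frac12})+\nabla F(\zmd_k),\, \bz_k-\zstar\rangle + \eta_k^2\|H(\bz_{k+\frac12})+\nabla F(\zmd_k)\|^2,
\end{equation*}
and rewrite $\bz_k-\zstar = (\bz_{k+\frac12}-\zstar)+(\bz_k-\bz_{k+\frac12})$ with $\bz_k-\bz_{k+\frac12}=\eta_k(H(\bz_{k-\frac12})+\nabla F(\zmd_k))$ via Line~\ref{line:halfupd}. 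This produces an inner product at the midpoint (where monotonicity of $H$ in~\eqref{eq:H_prop} applies) plus a quadratic cross term in the two consecutive operator evaluations whose sum-of-squares telescopes into $\eta_k^2\|H(\bz_{k+\frac12})-H(\bz_{k-\frac12})\|^2 - \eta_k^2\|H(\bz_{k-\frac12})+\nabla F(\zmd_k)\|^2$.

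Next, I would handle the two components separately. For the coupling operator, the monotonicity of $H$ yields $\langle H(\bz_{k+\frac12}),\bz_{k+\frac12}-\zstar\rangle \ge \langle H(\zstar), \bz_{k+\frac12}-\zstar\rangle$, and the $L_H$-Lipschitz bound gives $\|H(\bz_{k+\frac12})-H(\bz_{k-\frac12})\|\le L_H\|\bz_{k+\frac12}-\bz_{k-\frac12}\|$, which I would split into the two adjacent residuals $\bz_{k+\frac12}-\bz_k$ and $\bz_k-\bz_{k-\frac12}$. For the individual term, I would use strong convexity/smoothness of $F$ at $\zmd_k$ together with the convex-combination identity $\zmd_k=(1-\alpha_k)\zag_k+\alpha_k\bz_k$ and its companion $\zag_{k+1}=(1-\alpha_k)\zag_k+\alpha_k\bz_{k+\frac12}$ from Lines~\ref{line:mdupd} and~\ref{line:agupd}, so that $\langle \nabla F(\zmd_k),\bz_{k+\frac12}-\zstar\rangle$ feeds into a standard Nesterov-style descent inequality on $F(\zag_{k+1})-F(\zstar)$ (this is where the weight $\alpha_k = 2/(k+2)$ and the growing stepsize are calibrated).

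I would then aggregate everything into a Lyapunov function of the shape
\begin{equation*}
V_k \;=\; \|\bz_k-\zstar\|^2 \;+\; a_k\,\eta_{k-1}^2\,\|H(\bz_{k-\frac12})-H(\bz_{k-\frac32})\|^2 \;+\; b_k\bigl(F(\zag_k)-F(\zstar)\bigr),
\end{equation*}
and argue that the two-term telescoping from the OGDA side, together with the Nesterov gap decrease, closes to $V_{k+1}\le V_k$ provided
\begin{equation*}
2\eta_k L \le k+2 \qquad\text{and}\qquad \bigl(3+\sqrt{3}\bigr)\eta_k^2 L_H^2 \le 1,
\end{equation*}
which both hold at equality-balance for $\eta_k=(k+2)/(2L+\sqrt{3+\sqrt{3}}\,L_H(k+2))$. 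The initialization $\bz_{-\frac12}=\zag_0=\bz_0$ makes the correction terms vanish at $k=0$, so $V_0=\|\bz_0-\zstar\|^2$, and the stated bound follows by dropping the (nonnegative) extra terms in $V_k$.

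The main obstacle, to my mind, is the extra squared-residual term $\eta_k^2\|H(\bz_{k+\frac12})-H(\bz_{k-\frac12})\|^2$ arising from the single-call reuse of the previous coupling gradient: bounding it by $L_H^2\|\bz_{k+\frac12}-\bz_{k-\frac12}\|^2$ and then by the telescoping squared-norm correction in $V_k$ forces the stepsize cap $(3+\sqrt{3})\eta_k^2 L_H^2\le 1$, which is exactly what produces the unusual constant $\sqrt{3+\sqrt{3}}$. Getting the algebra of this telescoping right, while simultaneously keeping the Nesterov-side inequality sharp enough to tolerate $\eta_k=\Theta(k)$ behavior in the small-$L_H$ regime, is the delicate calibration that the lemma is designed to enable.
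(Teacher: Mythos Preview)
Your approach is viable but it is \emph{not} how the paper proceeds. The paper does not construct a step-by-step monotone Lyapunov. Instead it first derives a \emph{telescoped} inequality (this is Eq.~\eqref{eq:recurs_solve}, obtained via Lemma~\ref{lem:basic}, Proposition~\ref{prop:PRecursion}, and the geometric-sum control of Lemma~\ref{lem:OGDA_recurs}):
\[
\Big(2L + \sqrt{\tfrac{2}{c}}L_H(K+1)\Big)\|\bz_K-\zstar\|^2 \;\le\; \Big(2L + \sqrt{\tfrac{2}{c}}L_H\Big)\|\bz_0-\zstar\|^2 + \sqrt{\tfrac{2}{c}}L_H\sum_{k=0}^{K-1}\|\bz_k-\zstar\|^2,
\]
and then runs a \emph{bootstrapping} argument on $M_K := \max_{0\le k\le K}\|\bz_k-\zstar\|^2$: bounding every summand on the right by $M_{K-1}$ makes the coefficients match exactly, so $\|\bz_K-\zstar\|^2\le M_{K-1}$, hence $M_K=M_0$. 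The OGDA residual $\|\bz_{k+\frac12}-\bz_{k-\frac12}\|^2$ is handled not by a single telescoping correction but by Lemma~\ref{lem:OGDA_recurs}, which expresses it as a geometric sum $2c^k\sum_{\ell\le k}c^{-\ell}\|\bz_{\ell+\frac12}-\bz_\ell\|^2$; after summation and a Fubini swap, the choice $c=\tfrac{2}{3+\sqrt{3}}$ makes this exactly cancel the $-\|\bz_{k+\frac12}-\bz_k\|^2$ budget, which is where the constant $\sqrt{3+\sqrt{3}}$ originates.

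Your single-step Lyapunov can be made to work as well (a constant $a$ in roughly $[\sqrt{3}-1,\tfrac{1+\sqrt{3}}{2}]$ and $b_k\approx\eta_{k-1}(k{+}1)$ do close the recursion), and it is arguably cleaner than the paper's geometric-sum route. There is one genuine slip, however: the term $b_k(F(\zag_k)-F(\zstar))$ is \emph{not} nonnegative in general, because $\zstar$ is a saddle of $\cL$, not a minimizer of $F$; indeed $\nabla F(\zstar)=-H(\zstar)\neq 0$ typically. You cannot ``drop'' this term at the end. The correct potential is the primal-dual gap $V(\zag_k,\zstar)=F(\zag_k)-F(\zstar)+\langle H(\zstar),\zag_k-\zstar\rangle$, which is $\ge\tfrac{\mu}{2}\|\zag_k-\zstar\|^2$ by Lemma~\ref{lemm_QuanBdd}; the $\langle H(\zstar),\cdot\rangle$ piece emerges naturally once you lower-bound $\langle H(\bz_{k+\frac12}),\bz_{k+\frac12}-\zstar\rangle$ by $\langle H(\zstar),\bz_{k+\frac12}-\zstar\rangle$ and pass it through the convex combination in Line~\ref{line:agupd}. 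With that fix, your plan goes through.
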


\begin{remark}
The result in Lemma~\ref{lem:deter_bound} is significant in that it establishes the \emph{last-iterate nonexpansiveness} ruled by the initialization $\bz_0$: the $\bz_k$ iteration stays in the ball centered at $\zstar$ with radius $\norm{\bz_0 - \zstar}$.
This is essential in proving convergence results of iteration $\zag_k$ where the main technical difficulty lies upon the additional recursive analysis due to gradient evaluation in a previous iterate.
From a past extragradient perspective, earlier analysis was focusing on the half iterates in extragradient step~\eqref{line:halfupd} ($\zholder_{k+\frac12}$ in our formulation).
In contrast, we perform a nonexpansiveness analysis on the integer steps ($\zholder_k$), serving as a critical improvement over the best previous result achieved by~\citet[Lemma 2(b)]{mokhtari2020convergence} (consider the bilinear coupling case where $f = 0$, $g = 0$), which merely admits a factor of $\sqrt{2}$ in terms of the Euclidean metric (i.e., $\norm{\zholder_k - \zstar} \leq \sqrt{2} \norm{\zholder_0 - \zstar}$).
\end{remark}

With the parameter choice in Lemma~\ref{lem:deter_bound}, Line~\ref{line:agupd} can also be seen as an average step that makes last iterates shrink toward the center of convergence.
Equipped with Lemma~\ref{lem:deter_bound}, we are ready to state the following convergence theorem for discrete-time AG-OG:
\begin{theorem}\label{theo_convAG-OG}
Under Assumption~\ref{assu:convex_smooth} and setting the parameters as in Lemma~\ref{lem:deter_bound}, the output of Algorithm~\ref{alg:AG_OG} on problem~\eqref{eq:Minmax_separable} satisfies:
\begin{equation}\label{convAG-OG}
\norm{\zag_K - \zstar}^2
\le
\left(
\frac{4L}{\mu(K + 1)^2}
+
\frac{2\sqrt{3+\sqrt{3}} L_H}{\mu(K + 1)}
\right)\|\bz_0 - \zstar\|^2
.
\end{equation}
\blue{Here we use $\mu$ to denote $\mu_f \wedge \mu_g$.}
\end{theorem}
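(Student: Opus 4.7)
The plan is to build a Lyapunov potential that merges a Nesterov-style weighted function-gap with an OGDA-style Euclidean-distance term and a ``memory'' term that absorbs the single-call optimistic error, show this potential is non-increasing along the AG-OG trajectory, and finally translate the resulting bound on the saddle gap into the Euclidean bound \eqref{convAG-OG} via $\mu$-strong convexity of $F = f+g$.

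The starting point is two one-step identities implied by Lines~\ref{line:mdupd}--\ref{line:kupd}:
\begin{align*}
\zag_{k+1} - \zmd_k
&= \alpha_k\bigl(\bz_{k+\frac12} - \bz_k\bigr)
= -\alpha_k\eta_k\bigl(H(\bz_{k-\frac12}) + \nabla F(\zmd_k)\bigr), \\
\bz_{k+1} - \bz_{k+\frac12}
&= -\eta_k\bigl(H(\bz_{k+\frac12}) - H(\bz_{k-\frac12})\bigr).
\end{align*}
The first identity supports an $L$-smoothness expansion of $F$ at $\zmd_k$ which, combined with $\mu$-strong convexity, produces the classical Nesterov one-step decrease that drives the $1/(K+1)^2$ rate on the $F$ side. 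The second identity encodes the OGDA feature: because AG-OG is single-call, the half-step reuses the \emph{previous} operator value $H(\bz_{k-\frac12})$, so the one-step estimate for $\|\bz_{k+1} - \zstar\|^2$ inherits a cross term proportional to $H(\bz_{k+\frac12}) - H(\bz_{k-\frac12})$, which $L_H$-Lipschitzness of $H$ turns into a memory term of the form $\eta_k^2\|\bz_{k+\frac12} - \bz_{k-\frac12}\|^2$.

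Next I would form a potential of the shape
\[
\Phi_k \;=\; A_k\,\mathrm{Gap}(\zag_k) \;+\; \|\bz_k - \zstar\|^2 \;+\; B_k\,\bigl\|\bz_{k-\frac12} - \bz_{k-\frac32}\bigr\|^2,
\]
where $\mathrm{Gap}$ denotes the primal-dual gap of $\mathcal{L}$, $A_k \asymp k(k+1)\eta_k$ is quadratic in $k$, and $B_k$ is tuned so that the optimistic memory telescopes rather than accumulates. Combining (a) the three-point identity $2\langle a-b,a-c\rangle = \|a-b\|^2 + \|a-c\|^2 - \|b-c\|^2$ applied at $\bz_{k+1}$; (b) monotonicity of $H$ from \eqref{eq:H_prop} together with $L_H$-Lipschitzness; (c) $L$-smoothness and $\mu$-strong convexity of $F$; and (d) a Young's inequality on the OGDA cross term, the descent $\Phi_{k+1} \le \Phi_k$ reduces to non-negativity of a quadratic form in $\eta_k$. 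Solving for the critical threshold forces the constant $\sqrt{3+\sqrt{3}}$, which is precisely why $\eta_k = (k+2)/(2L + \sqrt{3+\sqrt{3}}\,L_H(k+2))$: the $2L$ piece pays for the Nesterov smoothness residual on the $F$ side, and the $\sqrt{3+\sqrt{3}}\,L_H(k+2)$ piece pays for the OGDA memory residual on the $H$ side.

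The main obstacle is exactly this single-call coupling. Because $\nabla F$ and $H$ share the stepsize $\eta_k$, the OGDA memory term must be absorbed without eating the Nesterov acceleration budget on $F$; Lemma~\ref{lem:deter_bound} is indispensable here, since it guarantees $\|\bz_k - \zstar\|\le \|\bz_0 - \zstar\|$ uniformly in $k$, preventing the $B_k$-weighted memory from blowing up when it is telescoped across iterations. Once $\Phi_K \le \Phi_0 \lesssim \|\bz_0 - \zstar\|^2$ is established, strong convex-concavity of $\mathcal{L}$ gives $(\mu/2)\|\zag_K - \zstar\|^2 \le \mathrm{Gap}(\zag_K)$, and dividing by $A_K = \Theta((K+1)^2/L) \wedge \Theta((K+1)/L_H)$ splits cleanly into the two terms appearing on the right-hand side of \eqref{convAG-OG}.
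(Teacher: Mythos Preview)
Your high-level architecture---combine a Nesterov-weighted gap on $F$, a Euclidean distance to $\zstar$, and an OGDA memory term, then convert to \eqref{convAG-OG} via $V(\bz,\zstar)\ge\frac{\mu}{2}\|\bz-\zstar\|^2$---matches the paper. The gap is in the claim that a potential of the form $\Phi_k = A_k\,\mathrm{Gap}(\zag_k) + \|\bz_k-\zstar\|^2 + B_k\|\bz_{k-\frac12}-\bz_{k-\frac32}\|^2$ can be made \emph{monotone}. After multiplying the one-step inequality by $(k+2)^2$, the coefficient attached to $\|\bz_k-\zstar\|^2$ is $\frac{k+2}{\eta_k}=2L+\sqrt{3+\sqrt{3}}\,L_H(k+2)$, which \emph{increases} linearly in $k$; telescoping therefore does not close and instead leaves a residual $\sqrt{3+\sqrt{3}}\,L_H\sum_{k=0}^{K-1}\|\bz_k-\zstar\|^2$ on the right-hand side (Eq.~\eqref{eq:recurs_solve} in the paper). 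No choice of $B_k$ can absorb this, because the residual involves $\|\bz_k-\zstar\|^2$, not the memory quantity. This is exactly where Lemma~\ref{lem:deter_bound} is used---to bound each summand by $\|\bz_0-\zstar\|^2$---and the lemma itself is proved not by Lyapunov monotonicity but by a bootstrapping/Gronwall argument on the \emph{same} telescoped inequality. So your proposal is internally inconsistent: if $\Phi_{k+1}\le\Phi_k$ really held, nonexpansiveness would be unnecessary; and you have misplaced its role (it controls the distance residual, not the memory).

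The treatment of the memory term also needs adjustment. A single-step potential $B_k\|\bz_{k-\frac12}-\bz_{k-\frac32}\|^2$ telescopes cleanly in the bilinear case with constant $\eta_k$ (cf.\ the proof of Lemma~\ref{lem:deter_bound_bilinear}), but here the Nesterov weight $(k+2)$ multiplies both the positive memory contribution and the negative $\|\bz_{k+\frac12}-\bz_k\|^2$ term, and these indices are offset. The paper instead unrolls the one-step recursion into a geometric sum, $\|\bz_{k+\frac12}-\bz_{k-\frac12}\|^2\le 2\sum_{\ell\le k} c^{k-\ell}\|\bz_{\ell+\frac12}-\bz_\ell\|^2$ (Lemma~\ref{lem:OGDA_recurs}), sums over $k$, swaps the order of summation, and then chooses $c=\tfrac{2}{3+\sqrt{3}}$ so that the resulting coefficient on each $\|\bz_{\ell+\frac12}-\bz_\ell\|^2$ is dominated by the negative one; this double-sum cancellation, not a Young-inequality threshold on a single step, is what produces $\sqrt{3+\sqrt{3}}$.
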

The proof of Theorem~\ref{theo_convAG-OG} is provided in~\S\ref{sec:proof_theo_convAG-OG}.
The selection of $\alpha_k = \frac{2}{k+2}$ and $\eta_k = \frac{k+2}{2L + \sqrt{3+\sqrt{3}}L_H(k+2)}$ is vital for Nesterov's accelerated gradient descent to achieve desirable convergence behavior~\citep{nesterov1983method}.
This stepsize choice is larger%\todoj{should be smaller?}
than the ones used in previous techniques~\citep{chen2017accelerated, du2022optimal}, which is brought by a fine-tuned analysis of~\eqref{eq:bound_diff} in the proof of Theorem~\ref{theo_convAG-OG}. 
The convergence rate in~\eqref{convAG-OG} for strongly convex problems is slow and not even linear.
However, in the next subsection we show how a simple restarting technique not only achieves the linear convergence rate, but also matches the lower bound in~\citet{zhang2021lower} in a broad regime of parameters.

\begin{algorithm}[!t]
\caption{AcceleratedGradient-OptimisticGradient with restarting (AG-OG with restarting)}
\begin{algorithmic}[1]
\REQUIRE Initialization $\bz_0^{0}$, total number of epochs $N\ge 1$, per-epoch iterates $(K_{n}: n=0,\dots,N-1)$
\FOR{$n=0, 1,\dots,N-1$}
\STATE 
$\bz_{\text{out}} = \text{AG-OG}(\bz_0^{n}, \bz_0^{n}, \bz_0^{n}, K_n)$
\STATE\label{lineoutput}
Set $
\bz_0^{n+1}\leftarrow \bz_{\text{out}}
$
\\
\text{//Warm-starting from the previous output}
\ENDFOR
\STATE {\bfseries Output:}
$\bz_0^{N}$
\end{algorithmic}
\label{alg:AG-OG_with_restarting}
\end{algorithm}

\subsection{Improving Convergence Rates via Restarting and Scaling Reduction}\label{sec:AG-OG_with_restarting}

Our algorithm design (Algorithm~\ref{alg:AG-OG_with_restarting}) utilizes the restarting technique, which is a well-established method to accelerate first-order methods in optimization literature~\citep{o2015adaptive, roulet2017sharpness, renegar2022simple}.
Our variant of restarting accelerates convergence through a novel approach inspired by contemporary variance-reduction strategies, similar to those presented in~\citet{li2022convergence, du2022optimal}. 
Our approach is distinct from previous ones~\citep{kovalev2021accelerated, thekumparampil2022lifted, jin2022sharper} that incorporate the last iterate EG/OGDA with Nesterov's acceleration.
By incorporating the extrapolated step of Nesterov's method as the average step of OGDA and utilizing restarting, we use a two-timescale analysis and scaling reduction technique to achieve optimal results under all regimes.
Although our algorithm is a multi-loop algorithm, the simplicity of restarting does not harm the practical aspect of our approach.

Normally, as $f$ and $g$ have different strong convexity parameters ($\mu_f$ and $\mu_g$), it is preferable in practice to have different stepsizes for the descent step on $f(x)$ and the ascent step on $g(y)$~\citep{du2017stochastic,lin2020near,du2022optimal}.
Accordingly, for our analysis we use a scaling reduction technique~\citep{du2022optimal} that allows us to consider applying a single scaling for all parameters without loss of generality.
Setting $\hat{\yholder} = \sqrt{\frac{\mu_g}{\mu_f}} \yholder$, we have $\nabla_{\hat{\yholder}} H(\zholder) = \sqrt{\frac{\mu_f}{\mu_g}} \nabla_{\yholder} H(\zholder)$ and $\nabla_{\hat{\yholder}} g(\yholder) = \sqrt{\frac{\mu_f}{\mu_g}} \nabla g(\yholder)$.
Other scaling changes are listed as follows:
\begin{align}\label{eq:scal_reduce}
&
L = L_f \vee \frac{\mu_f}{\mu_g} L_g
,\qquad
L_H = I_{xx} \vee I_{xy}\sqrt{\frac{\mu_f}{\mu_g}} \vee I_{yy}\frac{\mu_f}{\mu_g}
,\qquad
\eta_{k, \yholder} = \frac{\eta_k \mu_f}{\mu_g}
,\qquad
\mu = \mu_f
,
\end{align}
where by $\eta_{k, \yholder}$ we mean that when updating $\zholder = [\xholder; \yholder] \in \RR^{n + m}$, we adopt stepsize $\eta_k$ on the $\xholder$-part (first $n$ coordinates) and $\eta_{k, \yholder}$ on the $\yholder$-part (last $m$ coordinates).
Writing out in details, the update rules with adjusted stepsizes on $[\xholder;\yholder]$ are as follows:
$$
\left\{
\begin{aligned}
&
x_k^{{\rm md}}
=
(1-\alpha_k) x_k^{{\rm ag}}+\alpha_k x_k
\\&
y_k^{{\rm md}}
=
(1-\alpha_k) y_k^{{\rm ag}}+\alpha_k y_k
\\&
x_{k+\frac{1}{2}}
=
x_k - \eta_k\left(
I_x(x_{k-\frac{1}{2}}, y_{k-\frac{1}{2}}) + \nabla f(x_k^{{\rm md}})
\right)
\\&
y_{k+\frac{1}{2}}
=
y_k - \eta_{k, \yholder}\left(
-I_y(x_{k-\frac{1}{2}}, y_{k-\frac{1}{2}}) + \nabla g(y_k^{{\rm md}})\right)
\\&
x_{k+1}^{{\rm ag}}
=
(1-\alpha_k) x_k^{{\rm ag}}+\alpha_k x_{k+\frac{1}{2}}
\\&
y_{k+1}^{{\rm ag}}
=
(1-\alpha_k) y_k^{{\rm ag}}+\alpha_k y_{k+\frac{1}{2}}
\\&
x_{k+1}
=
x_k-\eta_k\left(
I_x(x_{k+\frac{1}{2}}, y_{k+\frac{1}{2}}) + \nabla f(x_k^{{\rm md}})
\right)
\\&
y_{k+1}
=
y_k - \eta_{k, \yholder}\left(
-I_y(x_{k+\frac{1}{2}}, y_{k+\frac{1}{2}}) + \nabla g(y_k^{{\rm md}})
\right)
\end{aligned}
\right.
$$
With the new scaling and restarting, we obtain Algorithm~\ref{alg:AG-OG_with_restarting}, which we refer to as ``AG-OG with restarting.''
The iteration complexity of AG-OG with restarting is stated in the following Corollary~\ref{corr:determ_complexity}.

\begin{corollary}\label{corr:determ_complexity}
Algorithm~\ref{alg:AG-OG_with_restarting}  on problem~\eqref{eq:Minmax_separable}  \blue{with $
K_n = \left\lceil
\sqrt{8e\frac{L}{\mu}}
\vee
4e\sqrt{3+\sqrt{3}} \frac{L_H}{\mu}
\right\rceil
$} outputs an $\epsilon$-optimal minimax point within a number $\cO(N)$ of iterates, for $N$ satisfying:
\begin{equation}\label{eq:determ_complexity}
N
=
\left( \sqrt{\frac{L}{\mu}} + \frac{L_H}{\mu}\right)
\log\left(\frac{1}{\epsilon}\right)
=
\left(\sqrt{\frac{L_f}{\mu_f}\vee \frac{L_g}{\mu_g}} + \frac{I_{xx}}{\mu_f}\vee \frac{I_{xy}}{\sqrt{\mu_f\mu_g}} \vee \frac{I_{yy}}{\mu_g}\right)
\log\left(\frac{1}{\epsilon}\right)
.
\end{equation}
\end{corollary}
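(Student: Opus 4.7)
The plan is to combine the single-run bound of Theorem~\ref{theo_convAG-OG} with the outer restarting loop in Algorithm~\ref{alg:AG-OG_with_restarting}, after first invoking the scaling reduction summarized in~\eqref{eq:scal_reduce}. The substitution $\hat{\yholder}=\sqrt{\mu_g/\mu_f}\,\yholder$, together with the $\yholder$-stepsize $\eta_{k,\yholder}=(\mu_f/\mu_g)\eta_k$, makes the rescaled objective $\mu$-strongly-convex--strongly-concave with common parameter $\mu=\mu_f$ and replaces the individual and coupling smoothness constants by $L=L_f\vee(\mu_f/\mu_g)L_g$ and $L_H=I_{xx}\vee I_{xy}\sqrt{\mu_f/\mu_g}\vee I_{yy}\mu_f/\mu_g$. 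Because the change of variables is a linear isometry after rescaling, running Algorithm~\ref{alg:AG_OG} on the original problem with the modified $\yholder$-stepsize is equivalent, iterate-by-iterate, to running standard AG-OG on the rescaled problem, and squared distances transfer up to a factor of at most $\mu_f/\mu_g\vee 1$, which only enters the final logarithmic factor.

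Next I would analyze a single epoch. In Algorithm~\ref{alg:AG-OG_with_restarting} the warm-start is imposed as $\bz_{-1/2}=\zag_0=\bz_0=\bz_0^n$, matching the initialization hypothesis of Lemma~\ref{lem:deter_bound}, so Theorem~\ref{theo_convAG-OG} applies and gives
\[
\|\bz_0^{n+1}-\zstar\|^2
\le\Bigl(\tfrac{4L}{\mu(K_n+1)^2}+\tfrac{2\sqrt{3+\sqrt{3}}\,L_H}{\mu(K_n+1)}\Bigr)\|\bz_0^{n}-\zstar\|^2.
\]
With the prescribed $K_n=\bigl\lceil\sqrt{8eL/\mu}\vee 4e\sqrt{3+\sqrt{3}}\,L_H/\mu\bigr\rceil$, each of the two summands is at most $1/(2e)$, so the per-epoch contraction factor is at most $1/e$. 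Unrolling the recursion over $N$ epochs gives $\|\bz_0^{N}-\zstar\|^2\le e^{-N}\|\bz_0^{0}-\zstar\|^2$, and hence the choice $N=\Theta(\log(\|\bz_0^0-\zstar\|/\epsilon))$ produces an $\epsilon$-optimal minimax point in the rescaled coordinates; translating back into the original coordinates costs at most a further $\log(\mu_f/\mu_g)$ factor that is absorbed into the $\log(1/\epsilon)$ in~\eqref{eq:determ_complexity}.

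The total number of gradient calls is $\sum_{n=0}^{N-1}K_n=N\cdot K_0=\cO\bigl((\sqrt{L/\mu}+L_H/\mu)\log(1/\epsilon)\bigr)$, and substituting the rescaled constants gives exactly the quoted bound $\bigl(\sqrt{L_f/\mu_f\vee L_g/\mu_g}+I_{xx}/\mu_f\vee I_{xy}/\sqrt{\mu_f\mu_g}\vee I_{yy}/\mu_g\bigr)\log(1/\epsilon)$.

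The delicate step is the scaling reduction: one must verify that Algorithm~\ref{alg:AG_OG} with asymmetric stepsizes $(\eta_k,\eta_{k,\yholder})$ on the original problem really is AG-OG on the rescaled problem, so that Theorem~\ref{theo_convAG-OG} transfers with the rescaled $L,L_H,\mu$; once this equivalence is made precise, the rest is the geometric-shrinkage bookkeeping outlined above, and the choice of $K_n$ follows from a direct calculation designed to force each of the two terms in the contraction factor below $1/(2e)$.
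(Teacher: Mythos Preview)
Your proposal is correct and follows essentially the same argument as the paper: apply Theorem~\ref{theo_convAG-OG} to each epoch, choose $K_n$ so that each of the two terms in the contraction factor is at most $1/(2e)$, unroll the resulting $1/e$ contraction over $\log(1/\epsilon)$ epochs, and invoke the scaling reduction~\eqref{eq:scal_reduce} to convert $\sqrt{L/\mu}+L_H/\mu$ into the fine-grained form. The only difference is ordering---you front-load the scaling reduction whereas the paper applies it at the end---and you are slightly more careful than the paper in noting that the rescaling of distances contributes only a $\log(\mu_f/\mu_g)$ term absorbed into the logarithm.
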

We defer the proof of the corollary  to~\S\ref{sec:proof_corr_determ_complexity}.
\blue{
When restricted to the bilinear-coupled problem~\eqref{eq:Minmax_bilinear}, Eq.~\eqref{eq:determ_complexity} reduces to $\left(\sqrt{\frac{L_f}{\mu_f}\vee \frac{L_g}{\mu_g}} +  \frac{\normop{\mathbf{B}}}{\sqrt{\mu_f\mu_g}} \right)
\log\left(\frac{1}{\epsilon}\right)$, which exactly matches the lower bound result in~\citet{zhang2021lower} and therefore is optimal under this special instance.
}

The analysis in~\citet{du2022optimal}, which also adopts a restarted scheme is most similar with ours.
However, although OGDA can be written in past-EG form, the algorithm and theoretical analysis are fundamentally different~\citep{golowich2020tight}.
For example, in contrast to EG, the non-expansiveness argument for OGDA does not achieve a unity prefactor~\citep{mokhtari2020convergence}.
Our work proves a strict non-expansive property with prefactor $1$, and our technique is new compared with existing EG-based analysis and existing the OGDA-based analysis.

\subsection{Application to Separable Convex-Strongly-Concave (C-SC) Problem}
To extend our strongly-convex-strongly-concave (SC-SC) AG-OG algorithm complexity to the convex-strongly-concave (C-SC) setting, we define a \emph{regularization reduction} method that modifies the objective via the addition of a regularization term, which gives the objective function $
\cL_{\epsilon}(\bx, \by) = \cL(\bx, \by) + \epsilon \norm{\bx}^2
$, where $\epsilon$ is the desired accuracy of the solution.
The following Theorem~\ref{theo:C-SC} provides the complexity analysis; see~\S\ref{sec:proof_C-SC} for the proof.
\begin{theorem}\label{theo:C-SC}
The output of Algorithm~\ref{alg:AG_OG} under the same assumptions and stepsize choices of Theorem~\ref{theo_convAG-OG} on the objective function $\cL_{\epsilon}$ achieves the $\epsilon$-optimal minimax point of $L$ within the sample complexity of 
\begin{align*}
\cO\left(
\left(\sqrt{\frac{L_f}{\epsilon}\vee \frac{L_g}{\mu_g}} + \frac{I_{xx}}{\epsilon}\vee \frac{I_{xy}}{\sqrt{\epsilon\mu_g}} \vee \frac{I_{yy}}{\mu_g}\right)
\log\left(\frac{1}{\epsilon}\right)
\right)
\end{align*}
for the original C-SC problem.
\end{theorem}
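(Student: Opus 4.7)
The plan is to reduce the C-SC problem to the SC-SC setting via Tikhonov regularization in the primal variable, so that Corollary~\ref{corr:determ_complexity} can be applied with the regularization magnitude playing the role of $\mu_f$.

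First, I would set $\cL_\epsilon(\bx,\by) = \cL(\bx,\by) + \epsilon\|\bx\|^2$ and observe that this preserves the separable structure of~\eqref{eq:Minmax_separable}: the new individual component $f(\bx) + \epsilon\|\bx\|^2$ is $2\epsilon$-strongly convex and $(L_f+2\epsilon) = \Theta(L_f)$-smooth in the relevant regime $\epsilon \lesssim L_f$, while $g$ and $I(\bx,\by)$ are unchanged. Thus Assumption~\ref{assu:convex_smooth} is met with $(\mu_f, L_f)$ replaced by $(\Theta(\epsilon), \Theta(L_f))$ and the remaining parameters as before. Substituting $\mu_f \to \epsilon$ into the bound~\eqref{eq:determ_complexity} of Corollary~\ref{corr:determ_complexity} applied to $\cL_\epsilon$ directly yields the iteration count stated in Theorem~\ref{theo:C-SC}, producing a point $(\tilde\bx,\tilde\by)$ arbitrarily close to the unique saddle $(\bx_\epsilon^*,\by_\epsilon^*)$ of $\cL_\epsilon$.

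The main technical step is to convert this into an $\epsilon$-optimal minimax point of the original $\cL$ in the sense of Definition~\ref{defi_nearoptimal}. Strong concavity of $g$ implies that any saddle of $\cL$ shares the same $\by^*$; writing the vanishing-gradient relation for $\cL_\epsilon$ at $\bz_\epsilon^*$ against a reference saddle $\bz_0^* = (\bx_0^*, \by^*)$ of $\cL$ and invoking strong monotonicity of the regularized operator together with Cauchy--Schwarz yields the pair of bounds $\|\bx_\epsilon^* - \bx_0^*\| \le \|\bx_0^*\|$ and $\|\by_\epsilon^* - \by^*\| \le \sqrt{2\epsilon/\mu_g}\,\|\bx_0^*\|$. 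Since $\bx_\epsilon^*$ is itself a legitimate $O(\epsilon)$-approximate stationary point of $\cL$ (its residual $\bx$-gradient under $\cL$ is precisely $-2\epsilon\bx_\epsilon^*$), one may designate $\bx_\epsilon^*$ as the reference $\bx^*$; alternatively, choosing the regularization parameter a polynomial factor smaller than the target accuracy makes the drift $\|\bz_\epsilon^* - \bz_0^*\|$ dominated by $\epsilon$, so that the triangle inequality yields an $\epsilon$-optimal point of $\cL$ itself, with the overhead absorbed into the existing $\log(1/\epsilon)$ factor in~\eqref{eq:determ_complexity}.

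The principal obstacle is precisely this transfer step: unlike the $\by$-coordinate, the $\bx$-component of $\bz_\epsilon^*$ need not approach the minimum-norm optimum at a quantitative rate, so the reference saddle in the C-SC setting (where the set of $\bx$-optima may be a continuum) has to be fixed by convention. Once that convention is made consistent with Definition~\ref{defi_nearoptimal}, the remaining argument is a mechanical combination of Corollary~\ref{corr:determ_complexity} applied to $\cL_\epsilon$ with the Tikhonov estimates sketched above, and no new algorithmic analysis is required.
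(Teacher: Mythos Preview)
Your reduction via Tikhonov regularization is the right starting point and matches the paper, but the transfer step you flag as the ``principal obstacle'' is in fact a genuine gap that neither of your two proposed fixes closes. The bound $\|\bx_\epsilon^* - \bx_0^*\|\le\|\bx_0^*\|$ you derive is correct but \emph{independent of the regularization parameter}, so shrinking that parameter does nothing to make the drift $o(1)$; and ``designating $\bx_\epsilon^*$ as the reference $\bx^*$'' is not permissible because $\bx_\epsilon^*$ is only an $O(\epsilon)$-approximate stationary point of $\cL$, not an exact saddle, so Definition~\ref{defi_nearoptimal} does not apply with that choice. In the merely convex regime there is no quantitative rate for $\bx_\epsilon^*\to\bx_{\min\text{-norm}}^*$ available from the assumptions alone, so the distance-based route cannot be completed as written.

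The paper sidesteps this entirely by abandoning the distance criterion in favor of the \emph{primal--dual gap} $\max_\by\cL(\hat\bx,\by)-\min_\bx\cL(\bx,\hat\by)$. The key observation is the one-line inequality
\[
\max_\by\cL(\hat\bx,\by)-\min_\bx\cL(\bx,\hat\by)\;\le\;\max_\by\cL_\epsilon(\hat\bx,\by)-\min_\bx\cL_\epsilon(\bx,\hat\by)\;\le\;\max_{\bz}V_\epsilon(\hat\bz,\bz),
\]
which transfers the regularized gap directly to the original problem \emph{without ever comparing $\bz_\epsilon^*$ to $\bz^*$}. The remaining work is then to bound $\max_\bz V_\epsilon(\zag_K,\bz)$ rather than just $V_\epsilon(\zag_K,\bz_\epsilon^*)$; the paper does this by re-invoking the Step~2 recursion from the proof of Theorem~\ref{theo_convAG-OG} with the reference point $\omega_{\bz}=\arg\max_\bz V_\epsilon(\zag_K,\bz)$ in the final restarted epoch, and a short bootstrapping argument controls $\|\bz_0-\omega_{\bz}\|$ via $\|\bz_0-\bz_\epsilon^*\|$. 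So while your invocation of Corollary~\ref{corr:determ_complexity} gives the right complexity \emph{expression}, the actual guarantee delivered is on the gap, and the paper needs an additional (non-black-box) pass through the recursion to handle the moving reference point.
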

The work of \citet{thekumparampil2022lifted} also provides a C-SC case result that is obtained by utilizing the smoothing technique~\citep{nesterov2005smooth}.
Additionally, they present a direct C-SC algorithm without smoothing.
On the other hand, \citet{kovalev2021accelerated} focuses on a different perspective on the C-SC problem where $\bx$ and $\by$ have strong interactions and obtains superlinear complexity of $\log(\frac{1}{\epsilon})$.%\todoj{Why is this "superlinear complexity"? Might just be "linear" ...}.
However, both of these papers are limited to bilinear coupling terms.
Our result, in contrast, targets a more general separable objective.
Our complexity in Theorem~\ref{theo:C-SC} matches the complexity for regularized reduction in \citet{thekumparampil2022lifted}.
Furthermore, Theorem~\ref{theo:C-SC} is optimal in the bilinear coupling case~\eqref{eq:Minmax_bilinear}.
The reason is that $\sqrt{\frac{L_f}{\epsilon}}$ is optimal for a pure minimization of convex $f$ \citep{nesterov2018lectures}, $\sqrt{\frac{L_g}{\mu_g}}$ is optimal for a pure maximization of strongly-concave $g$ \citep{nesterov2018lectures}, and $\frac{\|\mathbf{B}\|_{\op}}{\sqrt{\epsilon\mu_g}}$ matches the lower bound of bilinearly coupled concave-convex minimax optimization \citep{ouyang2021lower} when $f = 0$.

\subsection{Application to Bilinear Games}
While the complexity result for deterministic case in Corollary~\ref{corr:determ_complexity} has also been obtained in~\citet{thekumparampil2022lifted, kovalev2021accelerated} and \citet{jin2022sharper}, in addition to conceptual simplicity, our algorithm has the significant advantage that it yields a stochastic version and a convergence rate for the stochastic case.
By using proper averaging and scheduled restarting techniques, our algorithm is able to find near-optimal solutions and achieve an optimal sample complexity up to a constant prefactor. Additionally, we demonstrate that our algorithm can be reduced to a combination of the averaged iterates of the OGDA algorithm and a scheduled restarting procedure, which gives rise to a novel single-call algorithm that achieves an accelerated convergence rate on the bilinear minimax problem itself. Finally, we address the situation where there is stochasticity present in the problem.
Throughout this section, we consider Problem~\eqref{eq:Minmax_separable} with $\nabla f(\xholder), \nabla g(\yholder)$ being zero almost surely. Moreover, we assume the following bilinear form:
\begin{align}\label{eq:def_H_bilinear}
I(\xholder, \yholder)
=
\xholder^\top \mathbf{B} \yholder 
+
\xholder^\top \bu_{\bx}
+
\bu_{\by}^\top \yholder,
\end{align}
where $\xholder \in \RR^n$, $\yholder \in \RR^m$ with $n = m$, $\mathbf{B} \in \RR^{n \times n}$ is square and full-rank, and $\bu_{\bx}, \bu_{\by} \in \RR^n$ are two parameter vectors.
Algorithm~\ref{alg:AG_OG} reduces to an equivalent form of the OGDA algorithm (in the past extragradient form) with initial condition $
\zag_{0}
=
z_{-\frac12}
=
\zholder_0
$, which gives for all $k\ge 1$:
\begin{subequations}\label{eq:AG-OG-I}
\begin{numcases}{}
\bz_{k+\frac12}
&=
$
\bz_k - \eta_k H(\bz_{k-\frac12})
$,           \label{eq:agog_extra_step1}   
\\
\zag_{k+1}
&=
$
(1 - \alpha_k)\zag_{k} + \alpha_k \bz_{k+\frac12}
$     \label{eq:agog_ag_step}
\\
\bz_{k+1}
&=
$
\bz_k - \eta_kH(\bz_{k+\frac12})
$\label{eq:agog_extra_step2} 
.
\end{numcases}
\end{subequations}
By selecting the parameters $\alpha_k = \frac{2}{k+2}$ and $\eta_k = \frac{k+2}{2L + c_1 L_H(k+2)}$ with $L = 0$ and $c_1 = 2$ in~\eqref{eq:AG-OG-I}, we can prove a boundedness lemma (Lemma~\ref{lem:deter_bound_bilinear}, presented in~\ref{sec:proof_bilinear}), which is the bilinear game analogue of Lemma~\ref{lem:deter_bound} with an improved scheme of stepsize and demonstrates the non-expansiveness of the last iterate of the OGDA algorithm. The proof is deferred to~\S\ref{sec:proof_lem_deter_bound_bilinear}.

Non-expansiveness of the iterates further yields the following theorem whose proof is in~\S\ref{sec:theo_main_bilinear}.
\begin{theorem}\label{theo:main_bilinear}
When specified to the bilinear game case, setting the parameters as $\alpha_k = \frac{2}{k+2}$ and $\eta_k = \frac{1}{2L_H}$, the output of update rules~\eqref{eq:AG-OG-I} satisfies
\begin{equation}\tag{\ref{convAG-OG}}
\norm{\zag_K - \zstar}^2
\le
\frac{64\lambda_{\max}(\mathbf{B}^\top \mathbf{B})}{\lambda_{\min}(\mathbf{B}^\top \mathbf{B}) (K+1)^2} \norm{\zholder_0 - \zstar}^2
.
\end{equation}
Moreover, using the scheduled restarting technique, we obtain a complexity result that matches the lower bound of~\blue{\citet{ibrahim2020linear}}:
$$
O\left(
\sqrt{
\frac{
\lambda_{\max}\left(\mathbf{B}^{\top} \mathbf{B}\right)
}{
\lambda_{\min}\left(\mathbf{B}^{\top} \mathbf{B}\right)
}}
\log \left(\frac{1}{\varepsilon}\right)
\right)
.
$$
\end{theorem}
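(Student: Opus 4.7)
The statement has two parts: a sublinear $O(1/(K+1)^2)$ bound on $\|\zag_K-\zstar\|^2$ for the pure bilinear game, and the optimal linear complexity $O(\|\mathbf{B}\|_{\op}/\sqrt{\lambda_{\min}(\mathbf{B}^\top\mathbf{B})}\log(1/\epsilon))$ via restarting. The plan is to adapt the Nesterov-style argument behind Theorem~\ref{theo_convAG-OG}, substituting the absent strong convexity with the spectral structure of the skew-symmetric operator $H$, and then to layer a standard restarting argument on top.

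The first step is to invoke Lemma~\ref{lem:deter_bound_bilinear}, which guarantees the nonexpansiveness $\|\zholder_k - \zstar\| \le \|\zholder_0 - \zstar\|$ at the constant stepsize $\eta_k = 1/(2L_H)$. In the bilinear setting~\eqref{eq:def_H_bilinear}, $H$ is affine with $H(\bz)-H(\zstar) = M(\bz-\zstar)$ for the block matrix $M$ having $\mathbf{B}$ in the upper-right block and $-\mathbf{B}^\top$ in the lower-left. Two spectral consequences follow: first, $\langle H(\bz), \bz-\zstar\rangle = 0$ by skew-symmetry (which removes the monotone-decrease term that strong convexity normally provides in Nesterov-type arguments), and second, $\lambda_{\min}(\mathbf{B}^\top\mathbf{B})\|\bz-\zstar\|^2 \le \|H(\bz)\|^2 \le \lambda_{\max}(\mathbf{B}^\top\mathbf{B})\|\bz-\zstar\|^2$ whenever $\mathbf{B}$ is square and full-rank, which plays the role of an effective coercivity on $H$.

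I would then mimic the potential-function argument of Theorem~\ref{theo_convAG-OG}, summing the per-iterate OGDA inequality in past-extragradient form~\eqref{eq:OGDA_oneline} against the linearly increasing weights induced by $\alpha_k = 2/(k+2)$. Unrolling~\eqref{eq:agog_ag_step} yields $\zag_K = \tfrac{2}{K(K+1)}\sum_{k=0}^{K-1}(k+1)\zholder_{k+\frac12}$, and linearity of $H$ extends this averaging identity to $H(\zag_K)$. Combining the telescoped bound with nonexpansiveness should give $\|H(\zag_K)\|^2 \le 64\lambda_{\max}(\mathbf{B}^\top\mathbf{B})/(K+1)^2 \cdot \|\bz_0-\zstar\|^2$; dividing by $\lambda_{\min}(\mathbf{B}^\top\mathbf{B})$ via the spectral equivalence then produces the stated bound~(\ref{convAG-OG}). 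For the linear-complexity part, one picks $K = \lceil 8\sqrt{2\lambda_{\max}/\lambda_{\min}}\rceil$ so that one epoch of Algorithm~\ref{alg:AG_OG} contracts $\|\bz-\zstar\|^2$ by a factor of $1/2$; running Algorithm~\ref{alg:AG-OG_with_restarting} for $N = \Theta(\log(1/\epsilon))$ epochs then reaches an $\epsilon$-optimal point at the matching lower-bound complexity of~\citet{ibrahim2020linear}.

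The main obstacle is establishing the $O(1/(K+1)^2)$ decay of $\|H(\zag_K)\|^2$ in the absence of strong monotonicity: the coercive term $\mu\|\bz-\zstar\|^2$ that anchors the telescoping in the textbook Nesterov analysis is missing, and one must instead lean on the skew-symmetric cancellation $\langle H(\bz), \bz-\zstar\rangle = 0$ together with the single-call past-extragradient character of the update (which forces a smaller admissible stepsize and introduces an additional squared-error term involving the previous half-iterate, as emphasized in~\S\ref{sec:related_work}). Closing the telescoping cleanly in this regime is the delicate step, and the constant $64$ in the final bound absorbs the accumulated cross-iterate error.
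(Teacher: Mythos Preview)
Your proposal lands on the right ingredients---Lemma~\ref{lem:deter_bound_bilinear} for nonexpansiveness, the averaging identity $\zag_K-\zstar=\tfrac{2}{K(K+1)}\sum_{k=0}^{K-1}(k+1)(\bz_{k+\frac12}-\zstar)$, the spectral equivalence $\lambda_{\min}\|\bz-\zstar\|^2\le\|H(\bz)\|^2$, and restarting---and these are exactly what the paper uses. The route through $\|H(\zag_K)\|^2$ is equivalent to the paper's: they apply the spectral lower bound first to convert $\|\zag_K-\zstar\|^2$ into $\|\sum_k(k+1)H(\bz_{k+\frac12})\|^2/\lambda_{\min}$, while you propose to first compute $H(\zag_K)$ by linearity and then divide; same computation, different order.

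Two points of friction, though. First, the sentence about ``mimicking the potential-function argument of Theorem~\ref{theo_convAG-OG}'' and summing ``the per-iterate OGDA inequality in past-extragradient form'' is a red herring: the paper's bilinear proof does \emph{not} go through the gap function $V$ or any Lyapunov telescoping. It is a direct three-line argument, and the skew-symmetry $\langle H(\bz),\bz-\zstar\rangle=0$ you highlight is never used. Second, you leave implicit the step that actually does the work: from the update rule $\bz_{k+1}-\bz_k=-\eta_k H(\bz_{k+\frac12})$ one gets, by Abel summation,
\[
\sum_{k=0}^{K-1}(k+1)\,\eta_k H(\bz_{k+\frac12})
\;=\;-\Bigl(K\bz_K-\sum_{k=0}^{K-1}\bz_k\Bigr)
\;=\;-\sum_{k=0}^{K-1}\bigl[(\bz_K-\zstar)-(\bz_k-\zstar)\bigr],
\]
and it is \emph{this} telescoping---not a potential-function one---that, combined with Cauchy--Schwarz and nonexpansiveness, yields the $64\lambda_{\max}/(K+1)^2$ factor. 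Once you make this explicit your argument is complete; the ``main obstacle'' paragraph overstates the difficulty, since no coercivity surrogate or careful cross-iterate bookkeeping beyond Lemma~\ref{lem:deter_bound_bilinear} is needed here.
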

An extended result is also obtained in the stochastic setting; we refer interested readers to~\S\ref{sec:stoc_bilinear}.

\pb\section{Stochastic AcceleratedGradient OptimisticGradient Descent Ascent}\label{sec:S-AG-OG}
In this subsection, we generalize the theoretical performance of our AG-OG algorithm (Algorithm~\ref{alg:AG_OG} and~\ref{alg:AG-OG_with_restarting}) to the stochastic case where the rate-optimal convergence behavior is maintained.
The stochastic AG-OG algorithm replaces each batch gradient with its unbiased stochastic counterpart, with noise indices represented by $\zeta_t, \xi_t$.
The full stochastic AG-OG algorithm is shown in Algorithm~\ref{alg:AG_OG_stoc} in~\S\ref{sec_exp_sto}.

Based on a generalized nonexpasiveness lemma (Lemma~\ref{lem:stoc_boundedness}, presented in \S\ref{sec:proof_theo_stoc_conv_AG-OG}) which is the stochastic analogue of Lemma~\ref{lem:deter_bound}, we can proceed the analysis and arrive at our stochastic result.
See~\S\ref{sec:proof_theo_stoc_conv_AG-OG} for the proof.
\begin{theorem}\label{theo:stoc_conv_AG-OG}
Under Assumptions~\ref{assu:convex_smooth} and~\ref{assu:bounded_variance}, we take $\eta_k = \frac{k+2}{4L + D + 4\sqrt{2+\sqrt{2}} L_H(k+2)}$ where $D = \frac{\sigma}{C} \frac{A(K)}{\sqrt{\EE \norm{\bz_0 - \zstar}^2}}$ for $A(K) := \sqrt{(K+1)(K+2)(2K+3)/6}$ and some absolute constant $C > 0$.
Then the output of Algorithm~\ref{alg:AG_OG_stoc}  on problem~\eqref{eq:Minmax_separable} satisfies:
\begin{align*}
\EE \norm{\zag_K - \zstar}^2
\le
\left[
\frac{8L}{\mu(K+1)^2}
+
\frac{7.4(1+C^2)L_H}{\mu(K+1)}
\right]
\EE \norm{\bz_0 - \zstar}^2
+
\frac{2(C + \frac1C) \sigma }{\mu\sqrt{K+1}} \sqrt{\EE\norm{\bz_0 - \zstar}^2}
.
\end{align*}
\end{theorem}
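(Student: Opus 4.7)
The plan is to adapt the deterministic potential-function argument behind Theorem~\ref{theo_convAG-OG} to the stochastic setting, introducing an additional noise-tracking term in the potential and balancing bias versus variance via the stepsize parameter $D$. The overall strategy mirrors Lan-style accelerated stochastic analyses: derive a one-step inequality, telescope with weights proportional to $k+1$, and finally convert a Nesterov-type residual into a distance bound using $\mu$-strong convexity.

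First, I would establish a one-step inequality for the Lyapunov quantity $\Phi_k := \tfrac{1}{2}\|\bz_k - \zstar\|^2 + r_k \cdot [\text{Nesterov residual at } \zag_k]$, weighted so that it matches the combination used in~\S\ref{sec:proof_theo_convAG-OG}. The deterministic derivation relies on (i) the smoothness plus strong convexity of $F$ at the mirror-descent point $\zmd_k$, (ii) monotonicity of $H$ through Eq.~\eqref{eq:H_prop}, and (iii) a single-call correction term of the form $\eta_k^2 \|H(\zhp)-H(\zhm)\|^2$ controlled by $L_H$-smoothness. Replacing $H$ and $\nabla F$ by their unbiased stochastic oracles introduces additive zero-mean martingale terms whose cross-products vanish under conditional expectation, and squared terms that are bounded by $\sigma_H^2$ and $\sigma_F^2$ respectively. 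This produces, after taking expectation, a stochastic one-step bound of the form
\begin{equation*}
\mathbb{E}[\Phi_{k+1}] \le \mathbb{E}[\Phi_k] - \alpha_k \cdot (\text{progress})_k + C_1 \eta_k^2 \sigma^2 ,
\end{equation*}
where $(\text{progress})_k$ contains the bias-reduction pieces and the absorbed single-call residual.

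Second, I would telescope this inequality from $k=0$ to $K-1$, using the Nesterov weighting $\alpha_k = 2/(k+2)$ inherited from Algorithm~\ref{alg:AG_OG}. With the prescribed stepsize $\eta_k = (k+2)/[4L + D + 4\sqrt{2+\sqrt{2}} L_H(k+2)]$, the two bias contributions contract at rates $\Theta(L/K^2)$ and $\Theta(L_H/K)$ respectively, matching the two terms in the deterministic bound~\eqref{convAG-OG} up to constants. The noise contribution $\sum_k \eta_k^2 \sigma^2$ telescopes to $\Theta(\sigma^2 A(K)^2 / L_H^2)$ for large $K$, which when divided by the Nesterov weight $\Theta(K^2)$ and scaled by the final conversion to $\|\zag_K - \zstar\|^2$ (via $\mu$-strong convexity of $\cL$) yields the claimed $\sigma/(\mu\sqrt{K+1})$ scaling. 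At this stage the parameter $D$ remains free; it is chosen so that the variance portion of $\eta_k$ precisely trades off against the bias portion, producing the symmetric $C + 1/C$ structure in the final statement.

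Third, the stochastic non-expansiveness bound (Lemma~\ref{lem:stoc_boundedness}) is invoked to control $\mathbb{E}\|\bz_k - \zstar\|^2$ uniformly across all inner iterates, which is necessary to close the recursion since the OGDA single-call feedback couples the current step to the previous half-step. Without such a uniform bound, the noise terms arising from $\tilde H(\bz_{k-\frac12};\zeta_{k-\frac12}) - \tilde H(\bz_{k+\frac12};\zeta_{k+\frac12})$ would not telescope cleanly.

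The main obstacle will be the treatment of the single-call residual in the stochastic regime: unlike extragradient, where each step uses a fresh independent sample at the extrapolated point, the OGDA form reuses $\tilde H(\bz_{k-\frac12};\zeta_{k-\frac12})$ from the previous iterate, so the noise at step $k$ and step $k+1$ are correlated through the iterate $\bz_{k-\frac12}$. Controlling the resulting error propagation forces a smaller admissible stepsize (reflected by the larger prefactor $4\sqrt{2+\sqrt{2}}L_H$ compared to the deterministic $\sqrt{3+\sqrt 3}L_H$) and requires carrying the extra error $\|\tilde H(\bz_{k+\frac12};\zeta_{k+\frac12}) - \tilde H(\bz_{k-\frac12};\zeta_{k-\frac12})\|^2$ as a distinct auxiliary quantity inside the potential, bounded via both the $L_H$-smoothness of $H$ and the variance assumption. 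Once this auxiliary recursion is closed, the remaining steps are routine Nesterov-style telescoping.
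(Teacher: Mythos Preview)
Your proposal is essentially correct and follows the same route as the paper: start from Lemma~\ref{lem:basic} (which holds verbatim in the stochastic case), apply Proposition~\ref{prop:PRecursion} to produce the single-call residual $\|\tilde H(\zhp;\zeta_{\ph})-\tilde H(\zhm;\zeta_{\mh})\|^2$, bound it via $L_H$-smoothness plus the variance assumption (the paper packages this as Lemma~\ref{lem:stoc_nablaH}), carry the auxiliary OGDA recursion~\eqref{eq:OGDA_recurs} in its stochastic form, telescope with weights $(k+2)^2$, invoke Lemma~\ref{lem:stoc_boundedness} for uniform-in-$k$ control of $\EE\|\bz_k-\zstar\|^2$, and finally choose $D$ to balance bias against variance. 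One small inaccuracy: in your one-step bound the noise enters as $\alpha_k\eta_k\,\sigma^2$ (equivalently $(k+2)\eta_k\,\sigma^2$ after weighting), not $\eta_k^2\,\sigma^2$; this is what makes $\sum_{k}(k+2)\eta_k\,\sigma^2\le A(K)^2\sigma^2/D$ the right accumulated variance, consistent with the $C+1/C$ structure you describe.
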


\begin{remark}
Without knowledge of expected initial distance $\EE\norm{\bz_0 - \zstar}^2$ to the true minimax point, we need an alternative selection of stepsize $\eta_k$.
We assume an upper bound on $\norm{\bz_0 - \zstar}^2$ defined as $\Gamma_0$ and let $C=\frac{\Gamma_0}{\sqrt{\EE\norm{\bz_0 - \zstar}^2}}$.
The quantity $D = \frac{\sigma A(K)}{\Gamma_0}$ is hence known.
Thus
\begin{align*}
\EE \norm{\zag_K - \zstar}^2
\le
\left[
\frac{8L}{\mu(K+1)^2}
+
\frac{14.8L_H}{\mu(K+1)}
\right]
\Gamma_0^2
+
\frac{4\sigma}{\mu \sqrt{K+1}}\Gamma_0
.
\end{align*}
\end{remark}
Analogous to the method in~\S\ref{sec:AG-OG_with_restarting}, we restart the S-AG-OG algorithm properly and achieve the following complexity:
\begin{corollary}\label{corr:stocha_complexity}
With scheduled restarting imposed on top of Algorithm~\ref{alg:AG_OG_stoc}, Algorithm~\ref{alg:AG-OG_with_restarting}  on problem~\eqref{eq:Minmax_separable} outputs an $\epsilon$-optimal minimax point within $\cO(N)$ iterations, for $N$ satisfying:
\begin{align}
N
=
\left(
\sqrt{\frac{L}{\mu}} + \frac{L_H}{\mu}
\right)\log\left(\frac{1}{\epsilon}\right)
+ \frac{\sigma^2}{\mu_f^2 \epsilon^2}
\label{eq:determ_complexity}
=
\left(\sqrt{\frac{L_f}{\mu_f}\vee \frac{L_g}{\mu_g}} + \frac{I_{xx}}{\mu_f}\vee \frac{I_{xy}}{\sqrt{\mu_f\mu_g}} \vee \frac{I_{yy}}{\mu_g}\right)
\log\left(\frac{1}{\epsilon}\right)
+ \frac{\sigma^2}{\mu_f^2 \epsilon^2}
.
\end{align}
\end{corollary}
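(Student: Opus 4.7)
The plan is to mirror the restarting argument from Corollary~\ref{corr:determ_complexity} but now use Theorem~\ref{theo:stoc_conv_AG-OG} as the per-epoch guarantee, tuning the epoch length $K_n$ so that the two terms on the right-hand side of the stochastic bound can both be made smaller than a constant fraction of the incoming squared distance $D_n := \EE\|\bz_0^n-\zstar\|^2$. Recall Theorem~\ref{theo:stoc_conv_AG-OG} states, for one epoch of length $K$, that
\begin{align*}
D_{n+1} \le \Bigl[\tfrac{8L}{\mu(K+1)^2}+\tfrac{7.4(1+C^2)L_H}{\mu(K+1)}\Bigr] D_n + \tfrac{2(C+1/C)\sigma}{\mu\sqrt{K+1}}\sqrt{D_n}.
\end{align*}
I would fix the constant $C$ once and for all (say $C=1$), and then ensure that each of the two right-hand side terms is at most $D_n/4$, yielding the halving $D_{n+1}\le D_n/2$.

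Making the bias term at most $D_n/4$ forces $K_n \ge K^\star := \Theta\!\bigl(\sqrt{L/\mu}\vee L_H/\mu\bigr)$, the same deterministic schedule as in Corollary~\ref{corr:determ_complexity}. Making the noise term at most $D_n/4$ forces
\begin{equation*}
K_n+1 \;\ge\; \tfrac{64\,\sigma^2}{\mu^2\, D_n}.
\end{equation*}
Hence I would set $K_n = \bigl\lceil K^\star \,\vee\, 64\sigma^2/(\mu^2 D_n)\bigr\rceil$. Since the $D_n$'s are not directly observable, in practice one would use the geometric upper bound $D_n\le D_0\cdot 2^{-n}$ implied by the previous contractions; this is the standard way to implement variance-aware restart schedules (cf.~the AG-EG restart analysis of~\citet{du2022optimal}). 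One runs the algorithm until $D_N\le \epsilon^2$, which requires $N_{\mathrm{epoch}} = \lceil \log_2(D_0/\epsilon^2)\rceil = O(\log(1/\epsilon))$ outer epochs.

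The total number of inner iterations is $\sum_{n=0}^{N_{\mathrm{epoch}}-1} K_n$. The deterministic contribution is $K^\star\cdot N_{\mathrm{epoch}} = O\!\bigl((\sqrt{L/\mu}+L_H/\mu)\log(1/\epsilon)\bigr)$, which unpacks via the scaling-reduction~\eqref{eq:scal_reduce} into the first group of terms in the stated bound. The stochastic contribution is $\sum_{n} 64\sigma^2/(\mu^2 D_n) = (64\sigma^2/\mu^2)\sum_n 2^n/D_0$; this geometric series is dominated by its last term, which is $\Theta(\sigma^2/(\mu^2\epsilon^2))$, matching the $\sigma^2/(\mu_f^2\epsilon^2)$ term (here $\mu=\mu_f$ after the scaling reduction). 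Summing the two contributions gives exactly the complexity claimed in~\eqref{eq:determ_complexity}.

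The main obstacle, as I see it, is not the arithmetic of the geometric sum but rather the \emph{bias-variance balance} encoded in the schedule $K_n$: one must verify that the halving of $D_n$ is preserved across \emph{all} epochs, and in particular that the square-root factor $\sqrt{D_n}$ in the noise term (rather than $D_n$ itself) does not break the contraction once $D_n$ becomes small. This is precisely why the stepsize in Theorem~\ref{theo:stoc_conv_AG-OG} is parameterized by $D = \sigma A(K)/(C\sqrt{D_n})$; a secondary technical step is to check that the constant $C$ can be chosen (or the upper bound $\Gamma_0$ per epoch updated) so that the stepsize is admissible in every restart. Once this bookkeeping is in place, the statement follows by induction on the epoch index and a standard telescoping of the iteration counts.
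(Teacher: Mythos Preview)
Your approach is correct and matches what the paper intends: the paper does not spell out a separate proof for this corollary but simply states that it is ``analogous to the method in~\S\ref{sec:AG-OG_with_restarting}'' and, for the variance part of the schedule, implicitly defers to the restart argument of~\citet{du2022optimal} (cf.\ the end of the proof of Theorem~\ref{theo:main_bilinear_stoc}). Your sketch---halve the expected squared distance each epoch by choosing $K_n$ as the max of the deterministic threshold $\Theta(\sqrt{L/\mu}\vee L_H/\mu)$ and the variance-driven threshold $\Theta(\sigma^2/(\mu^2 \Gamma_n))$, then sum a geometric series dominated by its last term---is exactly that argument. The one bookkeeping point you flag (replacing the unknown $D_n$ by the inductively maintained upper bound $\Gamma_n = \Gamma_0 2^{-n}$ and using the Remark after Theorem~\ref{theo:stoc_conv_AG-OG} to set the stepsize via $\Gamma_n$ rather than $\sqrt{D_n}$) is the correct way to close the loop, and once done the induction $D_n\le\Gamma_n$ goes through verbatim.
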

In the special case of bilinearly coupled SC-SC, the above result reduces to 
\begin{align*}
\left(\sqrt{\frac{L_f}{\mu_f}\vee \frac{L_g}{\mu_g}} +  \frac{I_{xy}}{\sqrt{\mu_f\mu_g}} \right)
\log\left(\frac{1}{\epsilon}\right)
+ \frac{\sigma^2}{\mu_f^2 \epsilon^2},
\end{align*}
which matches that of \citet{du2022optimal} and is rate-optimal.
The reason is that the first term (i.e., bias error) matches the lower bound of bilinearly coupled SC-SC in \citet{zhang2021lower}, and the second term (i.e., variance error) matches the worst-case statistical minimax rate.

\section{Discussion}\label{sec:conclu}
In this paper, we propose novel algorithms for both the deterministic setting (AG-OG) and a stochastic setting (S-AG-OG) which organically blends optimism with Nesterov's acceleration, featuring structural interpretability and simplicity.
Leveraging novel Lyapunov analysis, these algorithms achieve desirable polynomial convergence behavior.
Further by properly restarting the algorithms, AG-OG and its stochastic version theoretically enjoy rate-optimal sample complexity for finding an $\epsilon$-accurate solution.
Future directions include closing the gap between the upper and lower bounds for general separable minimax optimization, and generalizations to nonconvex settings.%, and to improvement of statistical error from worst-case optimality to (near) instance-dependent optimality.

\section*{Acknowledgements}
We sincerely thank the anonymous reviewers for their helpful comments, and thank Simon Shaolei Du for highly inspiring discussions.
This work is supported in part by Canada CIFAR AI Chair to GG, by the National Science Foundation CAREER Award 1906169 to QG, by the Mathematical Data Science program of the Office of Naval Research under grant number N00014-18-1-2764 and also the Vannevar Bush Faculty Fellowship program under grant number N00014-21-1-2941 and NSF grant IIS-1901252 to MIJ.

\pb
\bibliography{SAILreference}
\bibliographystyle{apalike}

%%%%%%%%%%%%%%%%%%%%%%%%%%%%%%%%%%%%%%%%%%%%%%%%%%%%%%%%%%%%%%%%%%%%%%%%%%%%%%%
%%%%%%%%%%%%%%%%%%%%%%%%%%%%%%%%%%%%%%%%%%%%%%%%%%%%%%%%%%%%%%%%%%%%%%%%%%%%%%%
% APPENDIX
%%%%%%%%%%%%%%%%%%%%%%%%%%%%%%%%%%%%%%%%%%%%%%%%%%%%%%%%%%%%%%%%%%%%%%%%%%%%%%%
%%%%%%%%%%%%%%%%%%%%%%%%%%%%%%%%%%%%%%%%%%%%%%%%%%%%%%%%%%%%%%%%%%%%%%%%%%%%%%%
\newpage
\appendix

\section{Examples of Separable Minimax Optimization}
In this section, we use two examples to showcase the applications of formulation~\eqref{eq:Minmax_separable}.
We refer the readers for other examples in prior works such as~\citet{thekumparampil2022lifted,kovalev2021accelerated,du2022optimal}.
In the first example, we demonstrate how the parameters of a linear state-value function can be estimated by solving~\eqref{eq:Minmax_separable}.
In the second example of robust learning problem, we illustrate how turning the disk constraint into a penalty term allows us to obtain an objective in the form of~\eqref{eq:Minmax_separable}.

\paragraph{Policy Evaluation in Reinforcement Learning.}
The policy evaluation problem in RL can be formulated as a convex-concave bilinearly coupled minimax problem.
We are provided a sequence of four-tuple $\{(s_t,a_t,r_t,s_{t+1})\}_{t=1}^n$, where

\begin{enumerate}[label=(\roman*)]
\setlength{\itemsep}{0pt}%
\item
$s_t$, $s_{t+1}$ are the current state (at time $t$) and future state (at time $t+1$), respectively;
\item
$a_t$ is the action at time $t$ generated by policy $\pi$, that is, $a_t = \pi(s_t)$;
\item
$r_t = r(s_t,a_t)$ is the reward obtained after taken action $a_t$ at state $s_t$.
\end{enumerate}

Our goal is to estimate the value function of a fixed policy $\pi$ in the discounted, infinite-horizon setting with discount factor $\gamma\in (0,1)$, where for each state $s$ the discounted reward
$$
V^\pi(s) \equiv \mathbb{E}\left[
\sum_{t=0}^\infty \gamma^t r_t
\,\bigg|\,
s_0 = s, a_t=\pi(s_t),\ \forall t\ge 0
\right]
.
$$
If a linear function approximation is adopted, i.e.~$V^\pi(s)=\phi(s)^{\top} \btheta$ where $\phi(\cdot)$ is a feature mapping from the state space to feature space, we estimate the model parameter $\btheta$ via minimizing the empirical \emph{mean-squared projected Bellman error (MSPBE)}:
\begin{equation}\label{MSPBE}
\min_{\btheta}~%
\frac12\|\mathbf{A}\btheta - \boldsymbol{b}\|_{\mathbf{C}^{-1}}^2
.
\end{equation}
where $\norm{\btheta}_{\mathbf{M}} \equiv \sqrt{\btheta^\top \mathbf{M} \btheta}$ denotes the $\mathbf{M}$-norm, for positive semi-definite matrix $\mathbf{M}$, of an arbitrary vector $\btheta$, and
\begin{align*}
\mathbf{A}
=
\frac{1}{n}\sum_{t=1}^n \phi(s_t) \left(\phi(s_t) - \gamma \phi(s_{t+1})\right)^\top
,
\quad
\boldsymbol{b}
=
\frac{1}{n}\sum_{t=1}^n r_t \phi(s_t)
,
\quad
\mathbf{C}
=
\frac{1}{n}\sum_{t=1}^n \phi(s_t) \phi(s_t)^\top
.
\end{align*}
Applying first-order optimization directly to \eqref{MSPBE} would necessitate computing (and storing) the inversion of matrix $\mathbf{C}$, or at least, the matrix-vector product $\mathbf{C}^{-1}\boldsymbol{v}$ for given vector $\boldsymbol{v}$ at each step, which would be computationally costly or even prohibited.
To circumvent inverting matrix $\mathbf{C}$ a reformulation via \emph{conjugate function} can be resorted to; that is, solving~\eqref{MSPBE} is equivalent to solving the following minimax problem~\citep{du2017stochastic,du2019linear}:
$$
\min_{\btheta} \max_{\bpsi}~%
- \bpsi^\top\mathbf{A}\btheta
- \frac12\|\bpsi\|_{\mathbf{C}}^2
+ \boldsymbol{b}^{\top} \bpsi
.
$$
Such an instance falls under the category of minimax problem~\eqref{eq:Minmax_separable} where the individual component is convex-concave, and is further enhanced to be strongly-convex-strongly-concave when a quadratic regularizer term is imposed and $\mathbf{C}$ is strictly positive definite.

\paragraph{Robust Learning.}
A robust learning or robust optimization problem targets to minimize an objective function (here the sum of squares) formulated as a minimax optimization problem~\citep{BEN[Robust],du2019linear,thekumparampil2022lifted}
\begin{equation}\label{ROBUST}
\min_{\xholder} \max_{\yholder: \|\yholder-\yholder_0\|\le\rds}~\frac12\|\mathbf{A}\xholder - \yholder\|^2
,
\end{equation}
where $\mathbf{A}$ is a coefficient matrix and $\yholder$ is a noisy observation vector, which is perturbed by a vector of $\rds$-bounded norm.
Transforming \eqref{ROBUST} to a penalized objective gives a formulation of
$
\min_{\xholder} \max_{\yholder}~%
\frac12\|\mathbf{A}\xholder - \yholder\|^2 - \pnlty\|\yholder - \yholder_0\|^2
.
$
When $\pnlty$ is selected to be strictly greater than $\frac12$, we get a strongly-convex-strongly-concave bilinearly coupled minimax optimization problem.

\section{Experiments}\label{sec_exp}
In this section, we empirically study the performance of our \AlgoName algorithm.
In these experimental results, we study both deterministic [\S\ref{sec_exp_det}] and stochastic settings [\S\ref{sec_exp_sto}], each of which we compare the state-of-the-art algorithms.
Throughout this section, the $x$-axis represents the number of gradient queries while the $y$-axis represents the squared distance to the minimax point.

\subsection{Deterministic Setting}\label{sec_exp_det}
We present results on synthetic quadratic game datasets:
\begin{align}\label{eq:quadratic_game}
\xholder^\top A_1 \xholder + \yholder^\top A_2 \xholder - \yholder^\top A_3 \yholder
,
\end{align}
with various selections of the eigenvalues of $A_1, A_2, A_3$.

\begin{figure*}[!tb]
\centering
\subfigure[$L_g = 64$, $\mu_g = 1$]{
\includegraphics[width=0.31\linewidth]{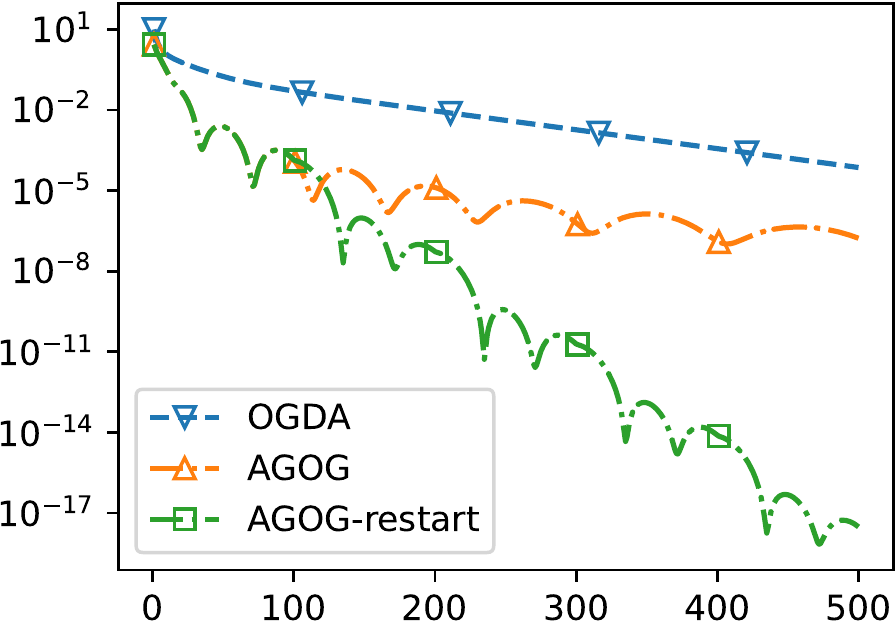}
\label{fig:large_bi_OGDA}
}
\subfigure[$L_g = 1$, $\mu_g = 1/64$]{
\includegraphics[width=0.31\linewidth]{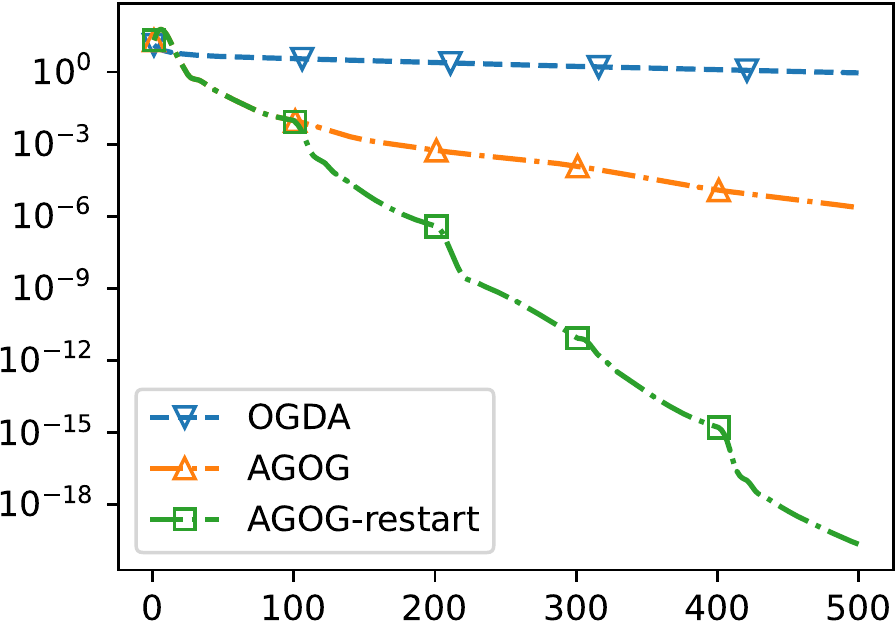}
\label{fig:larger_bi_OGDA}
}
\subfigure[$L_g = 4096$, $\mu_g = 64$]{
\includegraphics[width=0.3\linewidth]{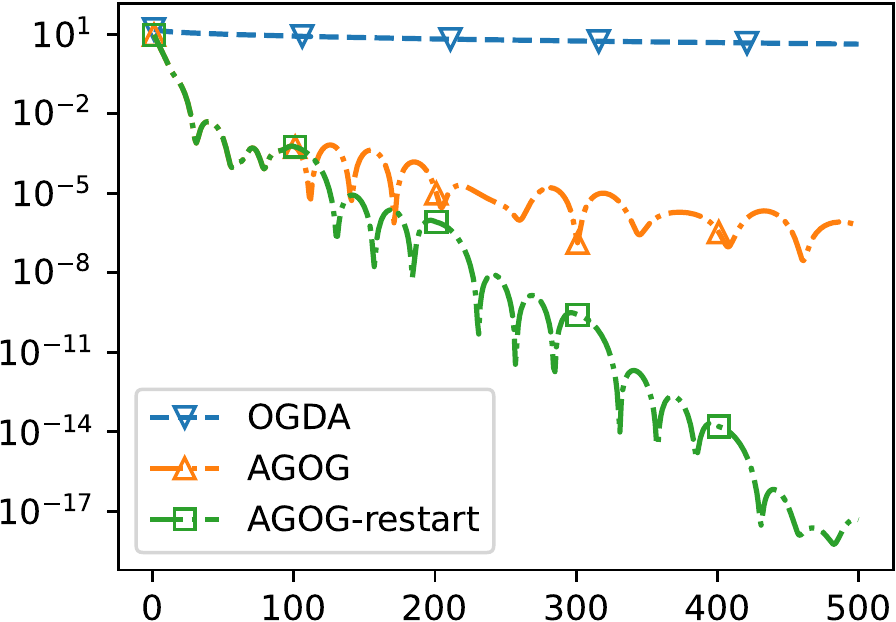}
\label{fig:large_L_OGDA}
}
\caption{Comparison with OGDA on different problem sets (Deterministic)}
\label{fig:figures_OGDA}
\end{figure*}

\paragraph{Comparison with OGDA.}

We use the single-call OGDA algorithm~\citep{gidel2018variational, hsieh2019convergence} as the baseline.
In Figure~\ref{fig:figures_OGDA} we plot the AG-OG algorithm and the AG-OG with restarting algorithm under three different instances.
We use stepsize $\eta_k = \frac{k+2}{2L + \sqrt{3+\sqrt{3}} L_H (k+2)}$ in both the AG-OG and the AG-OG with restarting algorithms and restart AG-OG with restarting once every $100$ iterates.
For the OGDA algorithm, we take stepsize $\eta = \frac{1}{2(L \vee L_H)}$ as is indicated by recent arts e.g.~\citep{mokhtari2020convergence}.
For the parameters of the problem~\eqref{eq:quadratic_game},
we fix $L_H = 1, L_f = 64, \mu_f = 1$ and scatter various values of $L_g, \mu_g$.
In Figure~\ref{fig:large_bi_OGDA} we take $L_g = 64$, $\mu_g = 1$.
In Figure~\ref{fig:larger_bi_OGDA} we take $L_g = 1$, $\mu_g = 1/64$ and in Figure~\ref{fig:large_L_OGDA} we take $L_g = 4096$, $\mu_g = 64$.
We see from Figures~\ref{fig:large_bi_OGDA},~\ref{fig:larger_bi_OGDA} and~\ref{fig:large_L_OGDA} when the problem has different $L_f, \mu_f$ and $L_g, \mu_g$, changing $L_g, \mu_g$ has larger impact on OGDA than on AG-OG, which matches our theoretical results.

\paragraph{Comparison with LPD.}

Next, we focus on comparison to the Lifted Primal-Dual (LPD) algorithm~\citep{thekumparampil2022lifted}.
We implement the AG-OG algorithm and its restarted version, the AG-OG with restarting.
Additionally, inspired by the technique of a single-loop direct-approach in~\citet{du2022optimal}, we consider a single-loop algorithm named \texttt{AG-OG-Direct} that takes advantage of the strongly-convex-strongly-concave nature of the problem.
We refer readers to~\citet{du2022optimal} for the ``direct" method.
The parameters of LPD are chosen as described in~\citet{thekumparampil2022lifted}.
For our AG-OG and AG-OG with restarting algorithms, we take $\eta_k = \frac{k+2}{2L + \sqrt{3+\sqrt{3}} L_H (k+2)}$ and the scaling parameters are taken as in Eq.~\eqref{eq:scal_reduce}.
For the AG-OG-direct algorithm, we take $\eta = \frac{1}{(1 + \sqrt{L/\mu_f + (\sqrt{3+\sqrt{3}}L_H)^2/\mu_f^2}) \mu_f}$ with the same set of scaling parameters.  We restart AG-OG with restarting once every 100 iterates.
\begin{figure*}[!tb]
\centering
\subfigure[$L_f = L_g = \mu_f = \mu_g = 1$, $L_H = 356, \mu_H = 101$]{
\includegraphics[width=0.31\linewidth]{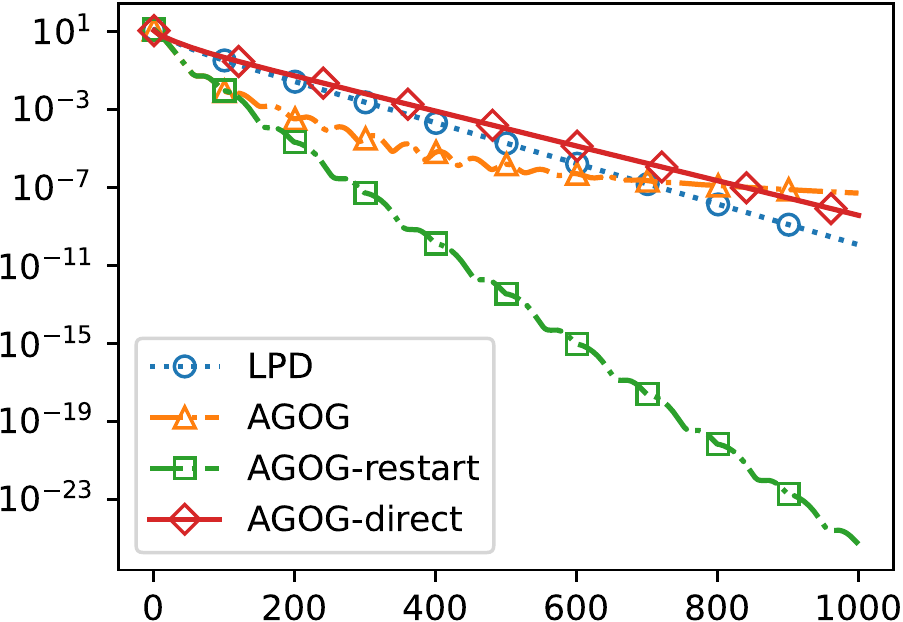}
\label{fig:large_bi_LPD}
}
\subfigure[$L_f = L_g = \mu_f = \mu_g = 1$, $L_H = 725, \mu_H = 101$]{
\includegraphics[width=0.31\linewidth]{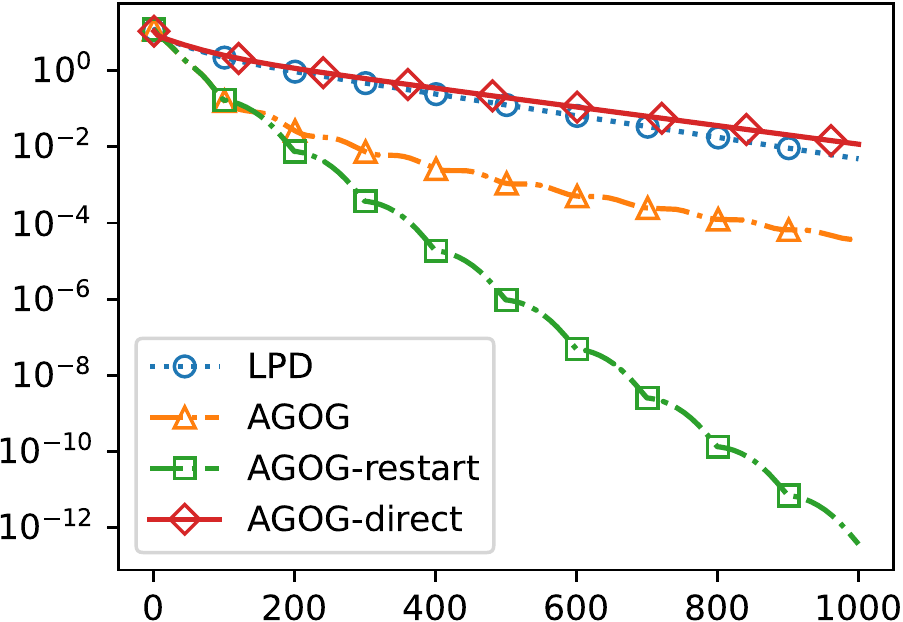}
\label{fig:larger_bi_LPD}
}
\subfigure[$L_f = L_g = 100$, $\mu_f = \mu_g = 1$, $L_H =\mu_H = 1$]{
\includegraphics[width=0.3\linewidth]{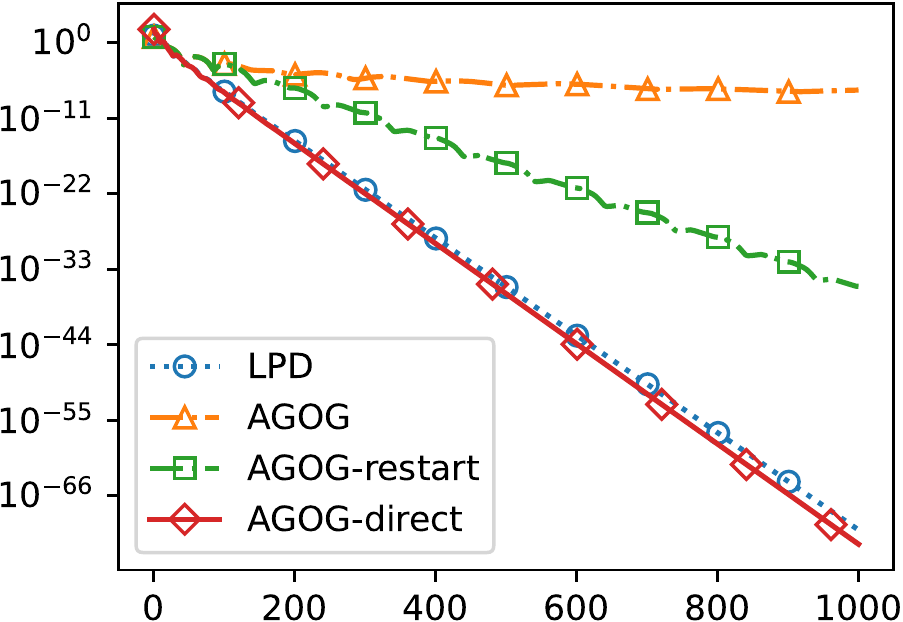}
\label{fig:large_L_LPD}
}
\caption{Comparison with LPD on different problem sets (Deterministic)}
\label{fig:figures_LPD}
\end{figure*}

In Figure~\ref{fig:large_bi_LPD}, the bilinear coupling component $\yholder^\top A_2 \xholder$ is the dominant part.
In Figure~\ref{fig:larger_bi_LPD}, we set the eigenvalues of $A_2$ even larger than in Figure~\ref{fig:large_bi_LPD}.
In Figure~\ref{fig:large_L_LPD}, $\xholder^\top A_1\xholder $ and $\yholder^\top A_3\yholder$ are the dominant terms.
More details on the specific designs of the matrices are shown in the caption of the corresponding figures.

We see from Figures~\ref{fig:large_bi_LPD} and~\ref{fig:larger_bi_LPD} that AG-OG with restarting (green line) outperforms LPD and MP in regimes where the bilinear term dominates, and when the eigenvalues of the coupling matrix increase, the performance of AG-OG with restarting relative to other algorithms is enhanced.
This is in accordance with our theoretical analysis.
In addition, AG-OG with restarting outperforms its non-restarted version (orange line) which has a gentle slope at the end.
On the other hand, when the individual component dominates, our AG-OG-direct (red line) slightly outperforms LPD.
Moreover, AG-OG-direct and LPD almost overlap in~\ref{fig:large_bi_LPD} and~\ref{fig:larger_bi_LPD}.

\begin{figure}[!tb]
\centering
\subfigure[$L_f = L_g = \mu_f = \mu_g = 1, L_H = 356, \mu_H = 101, \sigma=0.1$]{
\includegraphics[width=0.3\linewidth]{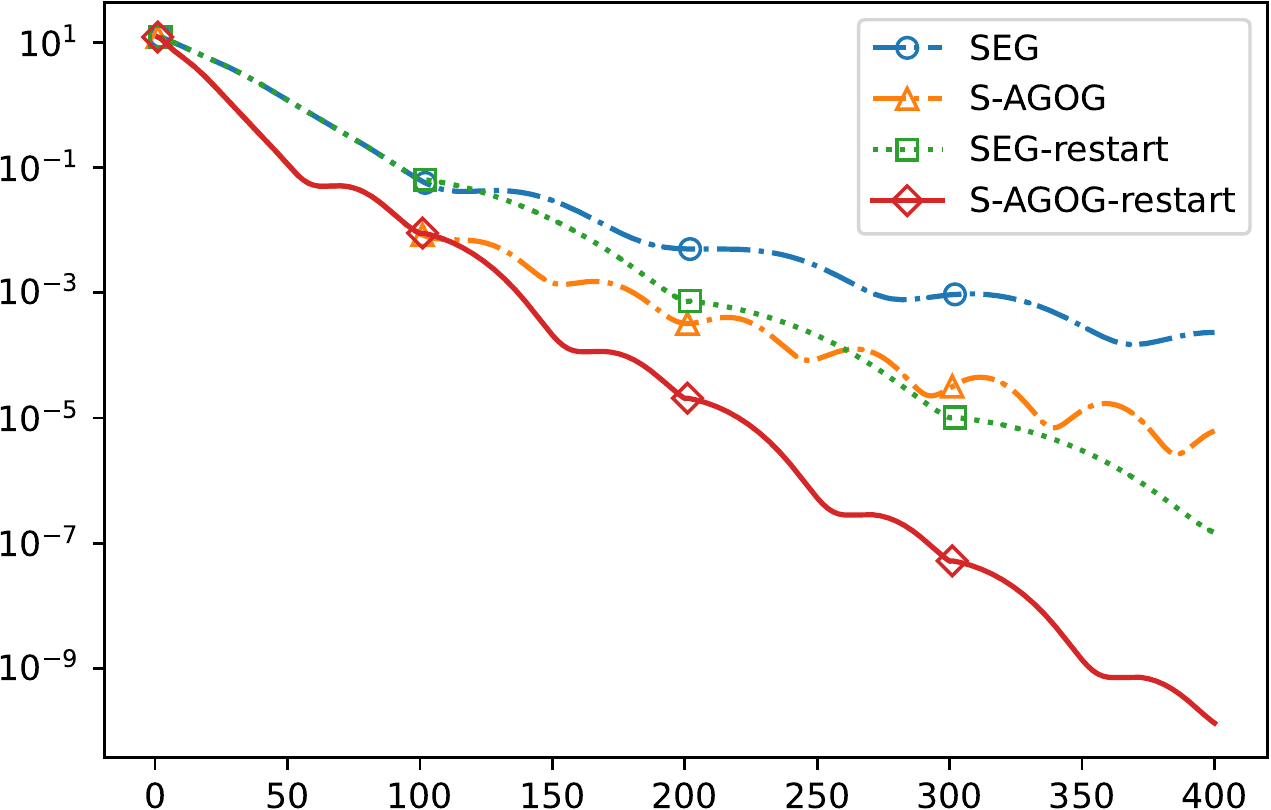}
\label{fig:dominate}
}
\subfigure[$L_f = L_g = 10, \mu_f = \mu_g = \mu_H = 1$, $L_H = 11, \sigma=0.1$]{
\includegraphics[width=0.3\linewidth]{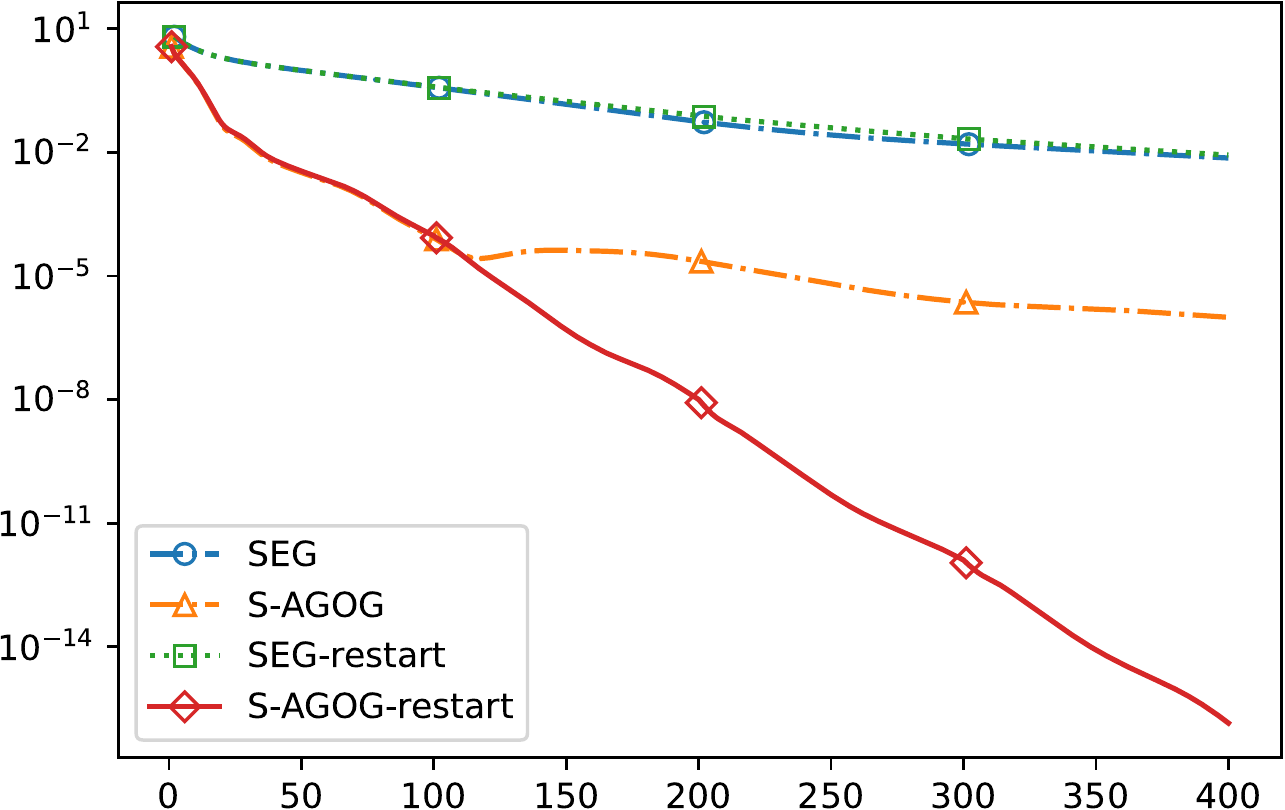}
\label{fig:balance_small_noise}
}
\subfigure[$L_f = L_g = 1, \mu_f = \mu_g = 1/8$, $L_H = \mu_H = 1, \sigma=0.1$]{
\includegraphics[width=0.3\linewidth]{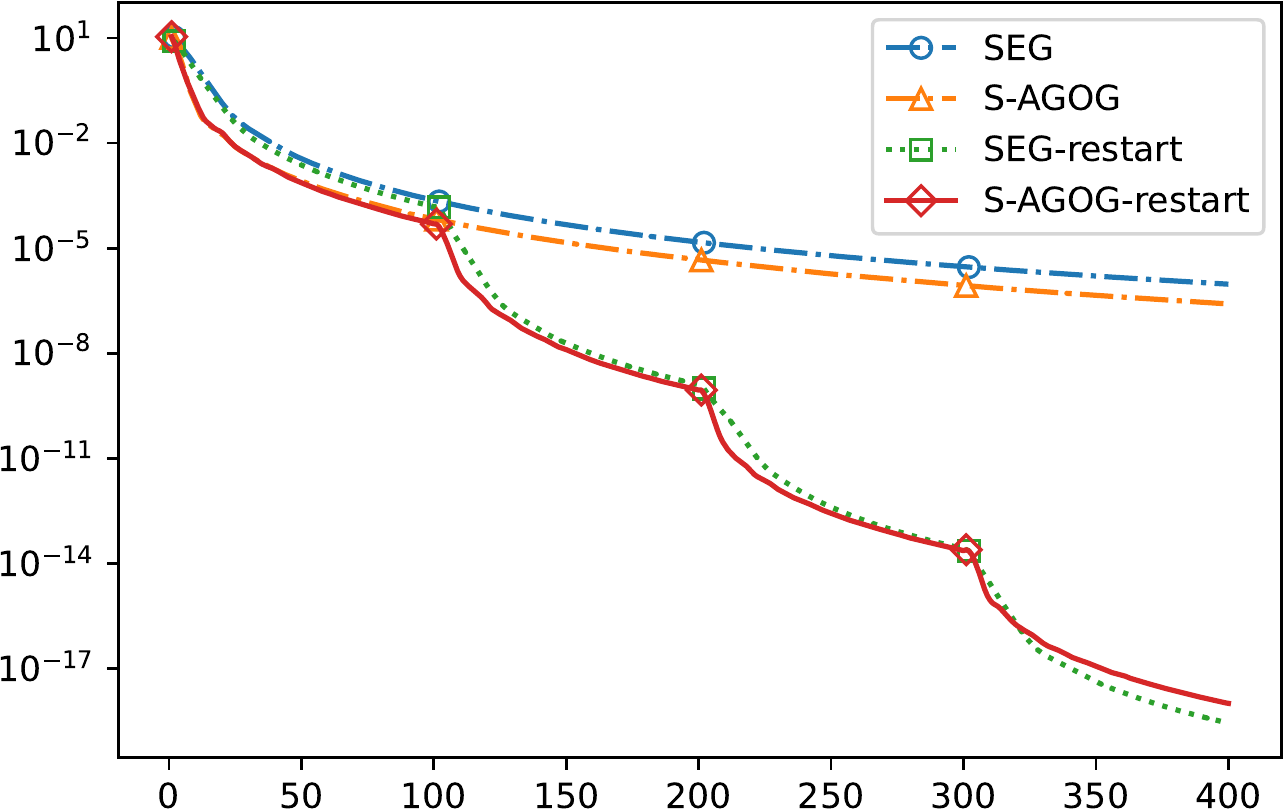}
\label{fig:balance_large_noise}
}
\caption{Comparison of algorithms on different problem sets (Stochastic)}
\label{fig:figures_stoc}
\end{figure}

\begin{algorithm}[!t]
\caption{Stochastic AcceleratedGradient-OptimisticGradient (S-AG-OG)$(\zag_0, \bz_0,\bz_{-1/2}, K)$}
\begin{algorithmic}[1]
\FOR{$k=0, 1,\dots,K-1$}
\STATE \label{line:mdupd_stoc}
$
\zmd_{k}
=
(1 - \alpha_{k}) \zag_{k} + \alpha_{k} \bz_{k}
$
\STATE \label{line:halfupd_stoc}
$\bz_{k + \frac12}
= 
\bz_{k} - \eta_k \left( \tilde{H}(\bz_{k-\frac12}; \zeta_{k-\frac12}) + \nabla \tilde{F}(\zmd_{k}; \xi_k) \right)
$
\STATE \label{line:agupd_stoc}
$
\zag_{k+1}
=
(1 - \alpha_k) \zag_{k} + \alpha_k \bz_{k + \frac12}
$
\STATE \label{line:kupd_stoc}
$
\bz_{k+1} = 
\bz_{k} - \eta_k \left( \tilde{H}(\bz_{k + \frac12}; \zeta_{k+\frac12}) + \nabla \tilde{F}(\zmd_{k}; \xi_k) \right)
$
\ENDFOR
\STATE {\bfseries Output:}
$\zag_K$
\end{algorithmic}
\label{alg:AG_OG_stoc}
\end{algorithm}

\subsection{Stochastic Setting}\label{sec_exp_sto}
We compared stochastic AG-OG and its restarted version (S-AG-OG) with Stochastic extragradient (SEG) SEG with restarting, respectively~\citep[cf.][]{li2022convergence}. The complete algorithm is shown in~\ref{alg:AG_OG_stoc}.
We note that we refer to the averaged iterates version of SEG everywhere when using SEG.
For SEG and SEG-restart, we use stepsize $\eta_k = \frac{1}{2(L\vee L_H)}$.
For AG-OG and AG-OG with restarting, we use stepsize $\eta_k = \frac{k+2}{2L + \sqrt{3 + \sqrt{3}}L_H(k+2)}$.
We restart every 100 gradient calculations for both SEG-restart and AG-OG-restart.

We use the same quadratic game setting as in~\eqref{eq:quadratic_game} except that we assume access only to noisy estimates of $A_1, A_2, A_3$. We add Gaussian noise to $A_1, A_2, A_3$ with $\sigma = 0.1$ throughout this experiment. We plot the squared norm error with respect to the number of gradient computations in Figure~\ref{fig:figures_stoc}.
In~\ref{fig:dominate} we consider larger eigenvalues for $A_2$ than $A_1$, $A_3$.
In~\ref{fig:balance_small_noise}, we let $A_1, A_2, A_3$ to be approximately of the same scale.
In~\ref{fig:balance_large_noise}, as the scale of the eigenvalues shrinks, the noise is relatively larger than in~\ref{fig:dominate} and~\ref{fig:balance_small_noise}.
The specific choice of parameters are shown in the caption of the corresponding figures.
We see from~\ref{fig:dominate},~\ref{fig:balance_large_noise} and~\ref{fig:balance_large_noise} that stochastic AG-OG with restarting achieves a more desirable convergence speed than SEG-restart. Also, the restarting technique significantly accelerates the convergence, validating our theory.

\section{Proof of Main Convergence Results}
This section collects the proofs of our main results,
Theorem~\ref{theo_convAG-OG} [\S\ref{sec:proof_theo_convAG-OG}],
Corollary~\ref{corr:determ_complexity} [\S\ref{sec:proof_corr_determ_complexity}],
and
Theorem~\ref{theo:stoc_conv_AG-OG} [\S\ref{sec:proof_theo_stoc_conv_AG-OG}].

\subsection{Proof of Theorem~\ref{theo_convAG-OG}}\label{sec:proof_theo_convAG-OG}
\begin{proof}[Proof of Theorem~\ref{theo_convAG-OG}]
We define the point-wise primal-dual gap function as:
\begin{align}\label{eq:def_gap}
V(\bz, \bz')
:=
F(\bz) - F(\bz') + \left\langle H(\bz'), \bz - \bz' \right\rangle
.
\end{align}
We first provide the following property for the primal-dual gap function:
\begin{lemma}\label{lemm_QuanBdd}
For $L$-smooth and $\mu$-strongly convex $F(\zholder)$, and for any $\zholder\in \RR^{n+m}$ we have
\beq\label{QuanBddx}
V(\zholder, \zstar)
=%%%%
F(\zholder) - F(\zstar)
+ 
\left\langle H(\zstar),	\zholder - \zstar \right\rangle
\ge
\frac{\mu}{2} \left\|\zholder - \zstar \right\|^2
.
\eeq
\end{lemma}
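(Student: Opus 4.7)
The plan is to combine the strong convexity of $F$ with the first-order optimality condition at the saddle point $\zstar$. Concretely, since $\zstar$ satisfies $W(\zstar) = 0$ in the unconstrained setting, and since $W(\zholder) = \nabla F(\zholder) + H(\zholder)$ by the decomposition introduced just after equation~\eqref{eq:grad_field}, we have the identity
\begin{equation*}
\nabla F(\zstar) = -H(\zstar).
\end{equation*}

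Next, I would invoke the $\mu$-strong convexity of $F(\zholder) = f(\xholder) + g(\yholder)$ (which follows from Assumption~\ref{assu:convex_smooth} with $\mu = \mu_f \wedge \mu_g$, as noted immediately after that assumption). Strong convexity yields the standard lower bound
\begin{equation*}
F(\zholder) - F(\zstar) \geq \langle \nabla F(\zstar), \zholder - \zstar \rangle + \frac{\mu}{2}\|\zholder - \zstar\|^2.
\end{equation*}
Substituting $\nabla F(\zstar) = -H(\zstar)$ into this inequality and rearranging gives
\begin{equation*}
F(\zholder) - F(\zstar) + \langle H(\zstar), \zholder - \zstar \rangle \geq \frac{\mu}{2}\|\zholder - \zstar\|^2,
\end{equation*}
which is precisely the claim \eqref{QuanBddx}, since the left-hand side matches the definition of $V(\zholder,\zstar)$ in \eqref{eq:def_gap}.

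There is no real obstacle here: the result is essentially a direct application of strong convexity combined with the variational characterization of $\zstar$. The $L$-smoothness hypothesis on $F$ is not actually used in this particular bound (it will presumably be needed in the broader proof of Theorem~\ref{theo_convAG-OG}). The only mild subtlety is to be explicit about why $\nabla F(\zstar) = -H(\zstar)$, which relies on the unconstrained setting $\cX = \RR^n$, $\cY = \RR^m$ stated in the problem setup; in a constrained setting one would instead use the variational inequality $\langle W(\zstar), \zholder - \zstar\rangle \geq 0$ for all $\zholder \in \cX \times \cY$, which would lead to the same conclusion since the strong convexity inequality and the VI inequality combine analogously.
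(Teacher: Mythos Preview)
Your proposal is correct and follows essentially the same approach as the paper: both invoke the first-order stationary condition $\nabla F(\zstar) + H(\zstar) = 0$ at the saddle point and combine it with the $\mu$-strong convexity inequality for $F$ to obtain the bound. Your observation that the $L$-smoothness hypothesis is not actually used here is also accurate.
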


Proof of Lemma \ref{lemm_QuanBdd} is provided in \S\ref{sec_proof,lemm_QuanBdd}.

Our proof proceeds in the following steps:

\paragraph{Step 1: Estimating weighted temporal difference in squared norms.}
We first prove a result on bounding the temporal difference of the point-wise primal-dual gap between $\zag_k$ and $\zstar$, whose proof is delayed to~\S\ref{sec:proof_lem:basic}.

\begin{lemma}\label{lem:basic}
For arbitrary $\alpha_k \in (0, 1]$ and any $\omega_{\bz} \in \RR^{n+m}$ the iterates of Algorithm~\ref{alg:AG_OG} satisfy for $k = 1, \ldots, K$ almost surely
\begin{align}
&
V(\zag_{k+1}, \omega_{\bz})
-
(1 - \alpha_k) V(\zag_{k}, \omega_{\bz})
\leq 
\alpha_k\underbrace{
\left\langle 
\nabla F(\zmd_{k}) + \Matrix(\bz_{k + \frac12})
,
\bz_{k + \frac12} - \omega_{\bz}
\right\rangle 
}_{\mbox{I}}
+
\underbrace{
\frac{L \alpha_k^2}{2} \left\| \bz_{k + \frac12} - \bz_{k} \right\|^2
}_{\mbox{II}}
.
\label{eq:basic}
\end{align}
\end{lemma}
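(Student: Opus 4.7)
My plan is to prove Lemma~\ref{lem:basic} by a direct per-step calculation that combines smoothness and convexity of $F$ at the point $\zmd_{k}$ with monotonicity of the coupling operator $H$, while exploiting the algorithmic identities $\zag_{k+1} - \zmd_{k} = \alpha_k(\bz_{k+\frac12} - \bz_k)$ and $\zmd_{k} - \zag_{k} = \alpha_k(\bz_k - \zag_{k})$ that are built into Lines~\ref{line:mdupd} and~\ref{line:agupd} of Algorithm~\ref{alg:AG_OG}. Note this is a purely structural identity: it uses neither the specific choice of $\eta_k$ in Lemma~\ref{lem:deter_bound} nor the OGDA extrapolation from Lines~\ref{line:halfupd} and~\ref{line:kupd} (these only matter when one later tries to control term $\mbox{I}$).

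First, I would unfold the left-hand side with the definition of $V$ in \eqref{eq:def_gap}. The $H(\omega_{\bz})$-piece telescopes cleanly because $\zag_{k+1} - (1-\alpha_k)\zag_{k} = \alpha_k \bz_{k+\frac12}$, leaving the contribution $\alpha_k\langle H(\omega_{\bz}), \bz_{k+\frac12} - \omega_{\bz}\rangle$, while the $F$-piece reduces to $F(\zag_{k+1}) - (1-\alpha_k) F(\zag_{k}) - \alpha_k F(\omega_{\bz})$. Second, I would bound this $F$-difference from above by using $L$-smoothness of $F$ at $\zmd_{k}$,
\[
F(\zag_{k+1}) \leq F(\zmd_{k}) + \langle \nabla F(\zmd_{k}), \zag_{k+1} - \zmd_{k}\rangle + \tfrac{L}{2}\|\zag_{k+1}-\zmd_{k}\|^2,
\]
and substituting $\zag_{k+1} - \zmd_{k} = \alpha_k(\bz_{k+\frac12} - \bz_k)$ produces the quadratic term $\mbox{II}$ of~\eqref{eq:basic}. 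Next, I would bound $(1-\alpha_k)F(\zag_{k}) + \alpha_k F(\omega_{\bz})$ from below by taking the $(1-\alpha_k,\alpha_k)$-convex combination of the two first-order convexity inequalities $F(\zag_{k}) \geq F(\zmd_{k}) + \langle \nabla F(\zmd_{k}), \zag_{k} - \zmd_{k}\rangle$ and $F(\omega_{\bz}) \geq F(\zmd_{k}) + \langle \nabla F(\zmd_{k}), \omega_{\bz} - \zmd_{k}\rangle$. The $\zag_{k}$-dependent pieces cancel after using $\zmd_{k} = (1-\alpha_k)\zag_{k} + \alpha_k \bz_k$, collapsing this lower bound into $F(\zmd_{k}) + \alpha_k\langle \nabla F(\zmd_{k}), \omega_{\bz} - \bz_k\rangle$. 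Subtracting this lower bound from the smoothness upper bound leaves $\alpha_k\langle \nabla F(\zmd_{k}), \bz_{k+\frac12} - \omega_{\bz}\rangle + \tfrac{L\alpha_k^2}{2}\|\bz_{k+\frac12}-\bz_k\|^2$.

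Finally, I would invoke monotonicity of $H$ at the pair $(\bz_{k+\frac12},\omega_{\bz})$ (Eq.~\eqref{eq:H_prop}) to upgrade $\langle H(\omega_{\bz}), \bz_{k+\frac12} - \omega_{\bz}\rangle \leq \langle H(\bz_{k+\frac12}), \bz_{k+\frac12} - \omega_{\bz}\rangle$. Merging this with the $\nabla F(\zmd_{k})$ inner product coming from the $F$-difference step yields the combined first-order quantity $\alpha_k\langle \nabla F(\zmd_{k}) + H(\bz_{k+\frac12}), \bz_{k+\frac12} - \omega_{\bz}\rangle = \alpha_k\cdot\mbox{I}$, completing~\eqref{eq:basic}.

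The main obstacle here is a bookkeeping one rather than a conceptual one: one must verify that the three convex combinations -- the one defining $\zag_{k+1}$, the one defining $\zmd_{k}$, and the lower-bound combination with weights $(1-\alpha_k,\alpha_k)$ -- telescope cleanly into the term $\alpha_k\langle \nabla F(\zmd_{k}), \omega_{\bz} - \bz_k\rangle$ without leaving any residual $\zag_{k}$-dependence. A misplaced $\alpha_k$ or a sign slip at that step would either inflate the quadratic constant beyond $\tfrac{L}{2}$ or produce an undesired $(1-\alpha_k)$ factor multiplying $\mbox{I}$, so I would verify that algebraic step carefully before proceeding.
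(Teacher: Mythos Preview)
Your proposal is correct and follows essentially the same approach as the paper's proof: both combine $L$-smoothness and convexity of $F$ at the anchor point $\zmd_{k}$, exploit the algorithmic identities $\zag_{k+1}-\zmd_{k}=\alpha_k(\bz_{k+\frac12}-\bz_k)$ and $\zmd_{k}=(1-\alpha_k)\zag_{k}+\alpha_k\bz_k$ to collapse the $F$-difference, and then invoke monotonicity of $H$ for the coupling term. The only cosmetic difference is that the paper bundles smoothness and convexity into a single inequality $F(\zag_{k+1})-F(\bu)\le\langle\nabla F(\zmd_{k}),\zag_{k+1}-\bu\rangle+\tfrac{L}{2}\|\zag_{k+1}-\zmd_{k}\|^2$ applied at $\bu=\omega_{\bz}$ and $\bu=\zag_{k}$ before taking the $(\alpha_k,1-\alpha_k)$ combination, whereas you apply smoothness once and convexity twice and then combine; the algebra is identical.
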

Note that in Lemma~\ref{lem:basic}, the term $\mbox{I}$ is an inner product that involves a gradient term.%
\footnote{In fact, this term reduces to $\left\langle \nabla f(\bz_k), \bz_k - \omega_{\bz} \right\rangle$ of the vanilla gradient algorithm if the features of accelerations and optimistic gradients are removed.}
The term $\mbox{II}$ is brought by gradient evaluated at $\zmd_k$.

Additionally, throughout the proof of Lemma~\ref{lem:basic}, we only leverage the convexity and $L$-smoothness of $f$ and the monotonicity of $H$ as in~\eqref{eq:H_prop}, as well as the update rules as in Line~\ref{line:mdupd} and Line~\ref{line:agupd}.
The proof involves no update rules regarding the gradient updates and hence Lemma~\ref{lem:basic} holds for the stochastic case as well.

Next, to further bound the inner product term $\mbox{I}$, we introduce a general proposition that holds for two updates starting from the same point.
Proposition~\ref{prop:PRecursion} is a slight modification from the proof of Proposition 4.2 in~\citet{chen2017accelerated} and analogous to Lemma 7.1 in~\citet{du2022optimal}.
We omit the proof here as the argument comes from simple algebraic tricks.
Readers can refer to~\citet{du2022optimal} for more details.

\begin{proposition}[Proposition 4.2 in~\citet{chen2017accelerated} and Lemma 7.1 in~\citet{du2022optimal}]\label{prop:PRecursion}
Given an initial point $\thetaholder \in \RR^d$, two update vectors $\boneholder, \btwoholder \in \RR^d$ and the corresponding outputs $\foneholder, \ftwoholder \in \RR^d$ satisfying:
\begin{align}\label{eq:Prox1}
\foneholder=\thetaholder - \boneholder
,\qquad
\ftwoholder=\thetaholder - \btwoholder
.
\end{align}
For any point $\arbholder\in \RR^d$ we have
\beq\label{eq:PRecursion}
\langle \btwoholder,\foneholder - \arbholder \rangle
\le
\frac{1}{2}\|\btwoholder - \boneholder\|^2 
+ 
\frac{1}{2} \left[
\|\thetaholder - \arbholder\|^2 - \|\ftwoholder - \arbholder\|^2 - \|\thetaholder - \foneholder\|^2
\right]
.
\eeq
\end{proposition}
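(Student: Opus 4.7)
The plan is to show that the claimed inequality is in fact a clean algebraic identity that falls out of two standard polarization moves, so no optimization ingredient (smoothness, convexity, monotonicity) is actually needed—only the two defining relations $\foneholder = \thetaholder - \boneholder$ and $\ftwoholder = \thetaholder - \btwoholder$. I would begin by decomposing the inner product along an intermediate point:
\begin{equation*}
\langle \btwoholder, \foneholder - \arbholder \rangle
=
\langle \btwoholder, \foneholder - \ftwoholder \rangle
+
\langle \btwoholder, \ftwoholder - \arbholder \rangle.
\end{equation*}
The first piece simplifies using $\foneholder - \ftwoholder = \btwoholder - \boneholder$ (direct subtraction of the two defining relations), reducing it to $\langle \btwoholder, \btwoholder - \boneholder\rangle$. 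The second piece simplifies using $\btwoholder = \thetaholder - \ftwoholder$, reducing it to $\langle \thetaholder - \ftwoholder, \ftwoholder - \arbholder\rangle$.

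Next I would invoke the two standard polarization identities. For the first piece I use $\langle a, a - b\rangle = \tfrac12\|a\|^2 + \tfrac12\|a - b\|^2 - \tfrac12\|b\|^2$ with $a=\btwoholder$, $b=\boneholder$ to get
\begin{equation*}
\langle \btwoholder, \btwoholder - \boneholder\rangle
=
\tfrac12\|\btwoholder\|^2 + \tfrac12\|\btwoholder - \boneholder\|^2 - \tfrac12\|\boneholder\|^2.
\end{equation*}
For the second piece I use the three-point identity $\langle a - b, b - c\rangle = \tfrac12\|a-c\|^2 - \tfrac12\|a-b\|^2 - \tfrac12\|b-c\|^2$ with $a = \thetaholder$, $b = \ftwoholder$, $c = \arbholder$ to obtain
\begin{equation*}
\langle \thetaholder - \ftwoholder, \ftwoholder - \arbholder\rangle
=
\tfrac12\|\thetaholder - \arbholder\|^2 - \tfrac12\|\thetaholder - \ftwoholder\|^2 - \tfrac12\|\ftwoholder - \arbholder\|^2.
\end{equation*}

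Finally I would add the two expressions. The $\tfrac12\|\btwoholder\|^2$ term on the first line cancels with $\tfrac12\|\thetaholder - \ftwoholder\|^2 = \tfrac12\|\btwoholder\|^2$ on the second line, and the remaining $-\tfrac12\|\boneholder\|^2$ is rewritten as $-\tfrac12\|\thetaholder - \foneholder\|^2$ using the first defining relation. The sum is then exactly the right-hand side of \eqref{eq:PRecursion}, with equality. The $\le$ in the stated proposition is thus cosmetic (it accommodates situations where a projection would turn one of the identities into an inequality, but in the unconstrained form here the bound is tight). There is no genuine obstacle—every step is a one-line expansion—so the only care needed is in bookkeeping the signs when collapsing $\|\boneholder\|^2$ into $\|\thetaholder - \foneholder\|^2$ and $\|\btwoholder\|^2$ into $\|\thetaholder - \ftwoholder\|^2$.
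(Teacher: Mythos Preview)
Your proof is correct and in fact establishes \eqref{eq:PRecursion} as an \emph{equality}, which is stronger than stated. The paper itself omits the proof entirely, noting only that ``the argument comes from simple algebraic tricks'' and referring readers to \citet{chen2017accelerated} and \citet{du2022optimal}; your polarization-based derivation is exactly the kind of elementary computation the authors had in mind, and your observation that the inequality is tight in the unconstrained setting (with the $\le$ being a placeholder for projected variants) is a nice clarification.
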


Noting that the gradient term $\nabla F(\zmd_k) + H(\bz_{k+\frac12})$ in Term $\mbox{I}$ of inequality~\eqref{eq:basic} of Lemma~\ref{lem:basic} has been used in updating $\bz_k$ to $\bz_{k+1}$ in Line~\ref{line:kupd} in Algorithm~\ref{alg:AG_OG}.
Comparing Line~\ref{line:kupd} with Line~\ref{line:halfupd} and by letting $\thetaholder=\bz_{k}, \foneholder=\bz_{k+\frac12}, \ftwoholder=\bz_{k+1}$ in Proposition~\ref{prop:PRecursion}, we obtain an upper bound for the inner product term $\mbox{I}$:
\begin{align}
\notag
\eta_k \cdot \mbox{I} 
&\leq 
\frac{\eta_k^2}{2}\norm{H(\bz_{k+\frac12}) - H(\bz_{k-\frac12})}^2
+
\frac12 \left[ 
\norm{\bz_{k} - \omega_{\bz}}^2 - \norm{\bz_{k+1} - \omega_{\bz}}^2 - \norm{\bz_{k+\frac12} - \bz_{k}}^2    
\right]
\\&\leq 
\frac{L_H^2\eta_k^2}{2} \norm{\bz_{k+\frac12} - \bz_{k-\frac12}}^2
+
\frac12 \left[ 
\norm{\bz_{k} - \omega_{\bz}}^2 - \norm{\bz_{k+1} - \omega_{\bz}}^2 - \norm{\bz_{k+\frac12} - \bz_{k}}^2    
\right]
,
\label{eq:bound_I}
\end{align}
where the last inequality is due to properties of $H$ and the definition of $L_H$.
Combining Eqs.~\eqref{eq:basic} and~\eqref{eq:bound_I} we obtain
\begin{align}\label{eq:bound_diff}
&\notag
V(\zag_{k+1}, \omega_{\bz})
-
(1 - \alpha_k) V(\zag_{k}, \omega_{\bz})
\leq 
\frac{L_H^2\eta_k\alpha_k}{2} \norm{\bz_{k+\frac12} - \bz_{k-\frac12}}^2
\\&~\quad
+
\frac{\alpha_k}{2\eta_k} \left[ 
\norm{\bz_{k} - \omega_{\bz}}^2 - \norm{\bz_{k+1} - \omega_{\bz}}^2 - \norm{\bz_{k+\frac12} - \bz_{k}}^2    \right] 
+
\frac{L \alpha_k^2}{2}
\norm{\bz_{k+\frac12} - \bz_{k}}^2
.
\end{align}
This finishes Step 1.

\paragraph{Step 2: Building and solving the recursion.}
We first apply the following lemma to build connections between $\norm{\zhp - \zhm}^2$ and $\norm{\zhp - \bz_k}^2$, reducing Eq.~\eqref{eq:bound_diff} to the composition of sequences $\{\norm{\zholder_k - \omega_{\bz}}^2\}_{0\le k\le K-1}$ and $\{\norm{\bz_{k+\frac12} - \bz_k}^2\}_{0\le k\le K-1}$.
The proof of Lemma~\ref{lem:OGDA_recurs} is deferred to~\S\ref{sec:proof_lem:OGDA_recurs}.

\begin{lemma}\label{lem:OGDA_recurs}
For any stepsize sequence $\{\eta_k\}_{0\le k\le K-1}$ satisfying for some positive constant $c > 0$ and the Lipschitz parameter $L_H$ such that $
L_H\eta_k\leq \sqrt{\frac{c}{2}}
$ holds for all $k$.
Algorithm~\ref{alg:AG_OG} with initialization $\bz_{-\frac12}=\zag_0=\bz_0$ gives the following for any $k=0,\dots,K-1$:
\begin{equation}\label{eq:OGDA_recurs}
\left\|\bz_{\ph} - \bz_{\mh}\right\|^2
\le
2c^k\sum_{\ell=0}^k c^{-\ell} \left\|\bz_{\ell+\frac12} - \bz_{\ell}\right\|^2
.
\end{equation}
\end{lemma}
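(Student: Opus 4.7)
The plan is to set up a one-step recursion that relates $\|\bz_{k+\frac12} - \bz_{k-\frac12}\|^2$ to $\|\bz_{k+\frac12} - \bz_k\|^2$ and $\|\bz_{k-\frac12} - \bz_{k-\frac32}\|^2$, and then unroll it back to the initialization. Write $a_k := \|\bz_{k+\frac12} - \bz_{k-\frac12}\|^2$ and $b_k := \|\bz_{k+\frac12} - \bz_k\|^2$; the goal is to show $a_k \le 2b_k + c\, a_{k-1}$ for $k \ge 1$ and $a_0 \le 2b_0$.

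First I would use the algorithm's update rules (Lines~\ref{line:halfupd} and~\ref{line:kupd}) to derive the identity
\[
\bz_k - \bz_{k-\frac12} = -\eta_{k-1}\bigl(H(\bz_{k-\frac12}) - H(\bz_{k-\frac32})\bigr),
\]
which follows by subtracting the expressions $\bz_{k-\frac12} = \bz_{k-1} - \eta_{k-1}(H(\bz_{k-\frac32}) + \nabla F(\zmd_{k-1}))$ and $\bz_{k} = \bz_{k-1} - \eta_{k-1}(H(\bz_{k-\frac12}) + \nabla F(\zmd_{k-1}))$. Splitting $\bz_{k+\frac12} - \bz_{k-\frac12} = (\bz_{k+\frac12} - \bz_k) + (\bz_k - \bz_{k-\frac12})$ and applying the elementary inequality $\|u+v\|^2 \le 2\|u\|^2 + 2\|v\|^2$, combined with the $L_H$-Lipschitz property of $H$ and the stepsize hypothesis $2L_H^2\eta_{k-1}^2 \le c$, yields
\[
a_k \le 2 b_k + 2\eta_{k-1}^2 L_H^2\, a_{k-1} \le 2 b_k + c\, a_{k-1}.
\]

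For the base case $k=0$, the initialization $\bz_{-\frac12} = \bz_0$ gives $a_0 = \|\bz_{\frac12} - \bz_0\|^2 = b_0 \le 2b_0$. Then I would unroll the recursion to obtain
\[
a_k \le 2\sum_{\ell=0}^{k} c^{k-\ell} b_\ell = 2c^k \sum_{\ell=0}^{k} c^{-\ell} b_\ell,
\]
which is exactly \eqref{eq:OGDA_recurs}.

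There is no serious obstacle in this argument; it is essentially a careful bookkeeping step. The only subtle point is making sure the initialization $\bz_{-\frac12} = \bz_0$ is used correctly so that the base case of the recursion terminates cleanly without introducing a phantom $\bz_{-\frac32}$ term, and this is precisely why the statement assumes $\bz_{-\frac12} = \zag_0 = \bz_0$.
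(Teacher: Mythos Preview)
Your proposal is correct and follows essentially the same approach as the paper: both split $\bz_{k+\frac12}-\bz_{k-\frac12}$ through $\bz_k$, use the identity $\bz_k-\bz_{k-\frac12}=-\eta_{k-1}(H(\bz_{k-\frac12})-H(\bz_{k-\frac32}))$ together with Lipschitzness and the stepsize bound to obtain the one-step recursion $a_k\le 2b_k+c\,a_{k-1}$, and then unroll (the paper writes it as a telescoping sum after multiplying by $c^{-k}$, which is the same thing). The handling of the base case via $\bz_{-\frac12}=\bz_0$ is also identical.
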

Combining Eqs.~\eqref{eq:bound_diff} and~\eqref{eq:OGDA_recurs}, bringing in the stepsize choice $\alpha_k = \frac{2}{k+2}$ and rearranging the terms, we obtain the following relation:
\begin{align*}
&
V(\zag_{k+1}, \omega_{\bz})
-
\frac{k}{k+2} V(\zag_{k}, \omega_{\bz})
\leq 
\frac{1}{\eta_k (k+2)} \left[ 
\norm{\bz_{k} - \omega_{\bz}}^2 - \norm{\bz_{k+1} - \omega_{\bz}}^2\right] 
\\&~\quad
-
\left( 
\frac{1}{\eta_k(k+2)}
-
\frac{2L}{(k+2)^2}
\right)\norm{\bz_{k+\frac12} - \bz_{k}}^2   
+
\frac{2L_H^2\eta_k}{k+2}
\sum_{\ell=0}^{k} c^{k-\ell}\norm{\bz_{\ell+\frac12} - \bz_{\ell}}^2
.
\end{align*}
Multiplying both sides by $(k+2)^2$, we obtain
\begin{align*}
&\notag
(k+2)^2V(\zag_{k+1}, \omega_{\bz})
-
[(k+1)^2 - 1] V(\zag_{k}, \omega_{\bz})
\leq 
\frac{k+2}{\eta_k} \left[ 
\norm{\bz_{k} - \omega_{\bz}}^2 - \norm{\bz_{k+1} - \omega_{\bz}}^2\right] 
\\&~\quad
-
\left(\frac{k+2}{\eta_k} - 2L\right)\norm{\bz_{k+\frac12} - \bz_{k}}^2   
+
2L_H^2 (k+2) \eta_k
\sum_{\ell=0}^{k} c^{k-\ell}\norm{\bz_{\ell+\frac12} - \bz_{\ell}}^2.
\end{align*}
Taking $
\eta_k = \frac{k+2}{2L + \sqrt{\frac{2}{c}}L_H(k+2)}
$, we have $
\frac{k+2}{\eta_k} - 2L
=
\sqrt{\frac{2}{c}}L_H (k+2)
$, and the previous inequality reduces to
\begin{align*}
&\notag
(k+2)^2V(\zag_{k+1}, \omega_{\bz})
-
[(k+1)^2 - 1] V(\zag_{k}, \omega_{\bz})
\\&\leq 
\left(2L + \sqrt{\frac{2}{c}}L_H(k+2)\right) \left[ 
\norm{\bz_{k} - \omega_{\bz}}^2 - \norm{\bz_{k+1} - \omega_{\bz}}^2\right]
\\&~\quad
- \sqrt{\frac{2}{c}}L_H(k+2)\norm{\bz_{k+\frac12} - \bz_{k}}^2   
+
\sqrt{2c}L_H(k+2)\sum_{\ell=0}^{k} c^{k-\ell}\norm{\bz_{\ell+\frac12} - \bz_{\ell}}^2
.
\end{align*}
Subtracting off term $V(\zag_{k+1}, \omega_{\bz})$ on both sides and summing over $k$ from $0$ to $K-1$, we have
\begin{align*}
&
\left[(K + 1)^2 - 1 \right] V(\zag_{K}, \omega_{\bz})
+
\left(2L + \sqrt{\frac{2}{c}}L_H(K+1)\right)
\|\bz_{K} - \omega_{\bz}\|^2
\\&\leq 
\left(2L + \sqrt{\frac{2}{c}} L_H \right) \norm{\bz_0 - \omega_{\bz}}^2 
+
\sqrt{\frac{2}{c}} L_H \sum_{k = 0}^{K-1} \left\|\bz_k - \omega_{\bz} \right\|^2
-
\sum_{k = 0}^{K-1} V(\zag_{k+1}, \omega_{\bz})
\\&~\quad
- \sqrt{\frac{2}{c}}  L_H \underbrace{\sum_{k = 0}^{K-1} (k + 2) \norm{\bz_{k+\frac12} - \bz_{k}}^2}_{\mbox{III}_1}
+
\sqrt{2c} L_H \underbrace{\sum_{k = 0}^{K-1} (k + 2) \sum_{\ell = 0}^{k} c^{k - \ell} \norm{\bz_{\ell+\frac12} - \bz_{\ell}}^2}_{\mbox{III}_2} 
.
\end{align*}
Simple algebra yields
\begin{align*}
\mbox{III}_2
&=
\sum_{\ell=0}^{K-1} \norm{\bz_{\ell+\frac12} - \bz_{\ell}}^2 \sum_{k = \ell}^{K-1} (k + 2) c^{k - \ell}
\leq 
\sum_{\ell=0}^{K-1} \left[\frac{\ell+2}{1 - c} + \frac{c}{(1 - c)^2}\right]\norm{\bz_{\ell + \frac12} - \bz_{\ell}}^2
.
\end{align*}
Straightforward derivations give that if we choose $c = \frac{2}{3 + \sqrt{3}}$, the inequality $
\sqrt{\frac{2}{c}} (k+2) 
\geq 
\sqrt{2c}
\left[ 
\frac{k+2}{1-c}+ \frac{c}{(1-c)^2}
\right]
$ holds for all $k \geq 0$.
Thus, summing $\mbox{III}_1$ and $\mbox{III}_2$ terms we have
\begin{align*}
&-
\sqrt{\frac{2}{c}} L_H \mbox{III}_1
+
\sqrt{2c} L_H \mbox{III}_2
\leq 0
.
\end{align*}
Finally, we solve the recursion and conclude
\begin{align}\label{eq:recurs_solve}
&\notag
\left[(K + 1)^2 - 1 \right] V(\zag_{K}, \omega_{\bz})
+
\left(2L + \sqrt{\frac{2}{c}}L_H(K+1)\right)
\|\bz_{K} - \omega_{\bz}\|^2
\\&\leq 
\left(2L + \sqrt{\frac{2}{c}} L_H \right) \norm{\bz_0 - \omega_{\bz}}^2 
+
\sqrt{\frac{2}{c}} L_H \sum_{k = 0}^{K-1} \left\|\bz_k - \omega_{\bz} \right\|^2
-
\sum_{k = 0}^{K-1} V(\zag_{k+1}, \omega_{\bz})
.
\end{align}
finishing Step 2.

\paragraph{Step 3: Proving $\bz_k$ stays nonexpansive with respect to $\zstar$.}
In Lemma~\ref{lem:deter_bound}, we show that $\bz_k$ always stays in the ball centered at $\zstar$ with radius $\norm{\bz_0 - \zstar}$.
The proof of this lemma is presented in~\S\ref{sec:proof_lem_deter_bound}.
\blue{
\begin{custom}{Lemma~\ref{lem:deter_bound}}[Nonexpansiveness, restated]
Under Assumptions~\ref{assu:convex_smooth}, we set the parameters as $L = L_f \vee L_g$, $L_H = I_{xx}\lor I_{yy} + I_{xy}$, $\eta_k = \frac{k + 2}{2L + \sqrt{3+\sqrt{3}} L_H(k + 2)}$ and $\alpha_k = \frac{2}{k+2}$ Algorithm~\ref{alg:AG_OG} with initialization $\bz_{-\frac12}=\zag_0=\bz_0$, at any iterate $k < K$ we have
\begin{align*}
\norm{\bz_k - \zstar} 
\leq 
\norm{\bz_0 - \zstar}
.
\end{align*}
\end{custom}
}

\paragraph{Step 4: Combining everything together.}
Bringing the nonexpansiveness result in Lemma~\ref{lem:deter_bound} into the solved recursion~\eqref{eq:recurs_solve}, setting $\omega_{\bz} = \zstar$ and rearranging, we obtain the following:
\begin{align*}
(K + 1)^2 V(\zag_{K}, \zstar)
&\leq (K + 1)^2 V(\zag_{K}, \zstar)
+
\left(2L + \sqrt{\frac{2}{c}}L_H(K+1)\right)
\|\bz_{K} - \zstar\|^2\notag
\\&\leq 
\left(2L + \sqrt{\frac{2}{c}}L_H(K+1)\right)
\|\bz_0 - \zstar\|^2
.
\end{align*}
Dividing both sides by $(K+1)^2$ and noting that Lemma~\ref{lemm_QuanBdd} implies $
V(\zag_K, \zstar) \geq \frac{\mu}{2}  \norm{\zag_K - \zstar}^2
$.
Hence, bringing in the choice of $c = \frac{2}{3+\sqrt{3}}$ concludes our proof of Theorem~\ref{theo_convAG-OG}.
\end{proof}
We finally remark that a limitation of this convergence rate bound is that the coefficient for $L_H$ in our stepsize choosing scheme is $\sqrt{3 + \sqrt{3}} \approx 2.175$ while an improved stepsize in this special case is $\frac{1}{2 L_H}$, yielding a sharper coefficient $2$.
Although the slight difference in constant factors does not harm the practical performance drastically, we anticipate that this constant might be further improved and leave it to future work.

\subsection{Proof of Corollary~\ref{corr:determ_complexity}}\label{sec:proof_corr_determ_complexity}
\begin{proof}[Proof of Corollary~\ref{corr:determ_complexity}]
The proof of restarting argument is direct.
By Eq.~\eqref{convAG-OG}, if we want $\norm{\zag_K - \zstar}^2 \leq \frac1e \norm{\bz_0 - \zstar}^2$ to hold, we can choose $K$ such that
\begin{align*}
\frac{4L}{\mu (K+1)^2} \leq \frac{1}{2e}
,\qquad 
\frac{2\sqrt{3+\sqrt{3}} L_H}{\mu (K+1)} \leq \frac{1}{2e}
.
\end{align*}
This is equivalent to 
\begin{align*}
K + 1 \geq \sqrt{\frac{8eL}{\mu}}, \qquad K+1 \geq \frac{4e\sqrt{3+\sqrt{3}} L_H}{\mu}
.
\end{align*}
For a given threshold $\epsilon > 0$, with the output of every epoch satisfying $\norm{\zag_K - \zstar}^2\leq \frac1e \norm{\bz_0 - \zstar}^2$, the total epochs required to obtain an $\epsilon$-optimal minimax point would be $\log\left(\frac{\norm{\bz_0 - \zstar}^2}{\epsilon}\right)$.
Thus, the total number of iterates required to get within the $\epsilon$ threshold would be:
\begin{align*}
\cO \left( \sqrt{\frac{L}{\mu}} + \frac{L_H}{\mu} \right)\cdot \log \left(\frac{1}{\epsilon}\right)
.
\end{align*}
Bringing the choice of scaling parameters in~\eqref{eq:scal_reduce} and we conclude our proof of Corollary~\ref{corr:determ_complexity}.
\end{proof}

\subsection{Proof of Theorem~\ref{theo:C-SC}}\label{sec:proof_C-SC}
\begin{proof}[Proof of Theorem~\ref{theo:C-SC}]  
For minimax problem, we recall that we define the primal-dual gap function as:
$$
V(\bz, \bz') = F(\bz) - F(\bz') + \left\langle 
H(\bz'), \bz - \bz'
\right\rangle
.
$$
We have for any pair of parameters $(\hat{\bx}, \hat{\by})$ and $(\bx, \by)$:
\begin{align*}
\cL(\hat{\bx}, \by) - \cL(\bx, \hat{\by})
&=
f(\hat{\bx}) - f(\bx)
+ 
g(\hat{\by})
-
g(\by) 
+
I(\hat{\bx}, \by) - I(\bx, \hat{\by})
\\&= 
f(\hat{\bx}) - f(\bx)
+ 
g(\hat{\by})
-
g(\by) 
+
I(\hat{\bx}, \by) 
- 
I(\bx, \by) 
+ 
I(\bx, \by)
-
I(\bx, \hat{\by})
\\&\leq 
f(\hat{\bx}) - f(\bx)
+ 
g(\hat{\by})
-
g(\by) 
+
\left\langle H(\bz),\hat{\bz} - \bz
\right\rangle 
\leq 
V(\hat{\bz}, \bz)
.
\end{align*}
Similarly for the regularized problem we define 
\begin{align*}
V_{\epsilon}(\bz, \bz') = V(\bz, \bz') + \frac{\epsilon}{2} \norm{\bx}^2 - \frac{\epsilon}{2} \norm{\bx'}^2
,
\end{align*}
and by the definition of $\cL_{\epsilon}$ we have
\begin{align*}
\cL_{\epsilon}(\hat{\bx}, \by) 
-
\cL_{\epsilon}(\bx, \hat{\by}) 
\leq 
V_{\epsilon}(\hat{\bz}, \bz)
,
\end{align*}
and moreover,
\begin{align*}
\max_{\by \in \cY} \cL_{\epsilon}(\hat{\bx}, \by) 
-
\min_{\bx \in \cX} \cL_{\epsilon}(\bx, \hat{\by})
\leq 
\max_{\bz \in \cX \times \cY} V_{\epsilon}(\hat{\bz}, \bz)
.
\end{align*}
By applying the AG-OG algorithm (Algorithm~\ref{alg:AG_OG}) onto the regularized objective $\cL_{\epsilon}$, if we can find a pair $\hat{\bz} = (\hat{\bx},\hat{\by})$ such that
\begin{align*}
\max_{\by \in \cY} \cL_{\epsilon}(\hat{\bx}, \by) 
-
\min_{\bx \in \cX} \cL_{\epsilon}(\bx, \hat{\by})
\leq 
\max_{\bz \in \cX \times \cY} V_{\epsilon}(\hat{\bz}, \bz) 
\leq \epsilon
.
\end{align*}
The result would imply for the original C-SC problem that
\begin{align*}
\max _{\by \in \mathcal{Y}} \cL\left(\hat{\bx}, \by\right)-\min _{\bx \in \mathcal{X}} \cL\left(\bx, \hat{\by}\right) 
&=
\max_{\by \in \mathcal{Y}} \cL\left(\hat{\bx}, \by\right) + \frac{\epsilon}{2} \norm{\hat{\bx}}^2 -\left(\min _{\bx \in \mathcal{X}} \cL\left(\bx, \hat{\by}\right)  + \frac{\epsilon}{2} \norm{\hat{\bx}}^2\right)
\\&\leq 
\max _{\by \in \mathcal{Y}} \left(\cL\left(\hat{x}, \by\right) + \frac{\epsilon}{2} \norm{\hat{\bx}}^2\right) -\min _{\bx \in \mathcal{X}}\left( \cL\left(\bx, \hat{\by}\right)  + \frac{\epsilon}{2} \norm{\bx}^2\right)
\\&\leq 
\max_{\by \in \mathcal{Y}} \cL_{\epsilon}\left(\hat{\bx}, \by\right)-\min _{\bx \in \mathcal{X}} \cL_{\epsilon}\left(\bx, \hat{\by}\right) 
\leq
\epsilon
.
\end{align*}
The left of this subsection is devoted to finding the gradient complexity of finding a $\hat{z} \in \cZ$ such that $\max_{\bz \in \cZ} V_{\epsilon}(\hat{\bz}, \bz)$.
\newcommand{\oz}{\boldsymbol{\omega}_{\bz}}
\\
By utilizing the results in \textbf{Step 2} in the proof of Theorem~\ref{theo_convAG-OG} to the objective $V_{\epsilon}$, we have
\begin{align*}
&V_{\epsilon}(\zag_{k+1}, \omega_{\bz})
-
\frac{k}{k+2} V_{\epsilon}(\zag_{k}, \omega_{\bz})\notag
\leq 
\frac{1}{\eta_k (k+2)} \left[ 
\norm{\bz_{k} - \omega_{\bz}}^2 - \norm{\bz_{k+1} - \omega_{\bz}}^2\right] 
\\&~\quad - 
\left( 
\frac{1}{\eta_k(k+2)}
-
\frac{2L}{(k+2)^2}
\right)\norm{\bz_{k+\frac12} - \bz_{k}}^2   
+
\frac{2\eta_k L_H^2}{k+2}
\sum_{\ell=0}^{k} c^{k-\ell}\norm{\bz_{\ell+\frac12} - \bz_{\ell}}^2.
\end{align*}
Multiplying both sides by $(k+2)(k+1)$, we obtain
\begin{align*}
&(k+2)(k+1)V_{\epsilon}(\zag_{k+1}, \omega_{\bz})
-
(k+1)k V_{\epsilon}(\zag_{k}, \omega_{\bz})\notag
\leq 
\frac{k+1}{\eta_k} \left[ 
\norm{\bz_{k} - \omega_{\bz}}^2 - \norm{\bz_{k+1} - \omega_{\bz}}^2\right] 
\\&~\quad - 
\left( 
\frac{k+1}{\eta_k}
-
2L
\right)\norm{\bz_{k+\frac12} - \bz_{k}}^2   
+
2(k+1)\eta_k L_H^2
\sum_{\ell=0}^{k} c^{k-\ell}\norm{\bz_{\ell+\frac12} - \bz_{\ell}}^2.
\end{align*}
Taking 
$
\eta_k = \frac{k+1}{2L + \sqrt{\frac{2}{c}}L_H(k+1)}
$, $c = \frac{2}{3 + \sqrt{5}}$ and adopting similar techniques as in the proof of Theorem~\ref{theo_convAG-OG}, we have
\begin{align}
&(K+2)(K+1) V_{\epsilon}(\zag_{K}, \oz)
+
\left(2L + \sqrt{\frac{2}{c}}L_HK\right)
\|\bz_{K} - \oz\|^2
\leq 
2L \norm{\bz_0 - \oz}^2 
+
\sqrt{\frac{2}{c}} L_H \sum_{k = 0}^{K-1} \left\|\bz_k - \oz \right\|^2
.
\label{eq:C-SC}
\end{align}
Taking $\oz = \zstar_{\epsilon}$ where $\zstar_{\epsilon}$ is the solution of the objective. Similarly as in Lemma~\ref{lem:deter_bound} in the proof of Theorem~\ref{theo_convAG-OG}, we can apply the same bootstrapping argument and derive for $\mu_{\epsilon}$ being the strongly convexity parameter of $V_{\epsilon}$, $L_{\epsilon}$ being the smoothness parameter of the regularized $F$, the following inequality
\begin{align*}
\norm{\zag_K - \zstar_{\epsilon}}^2
\le
\left(
\frac{4L}{\mu_{\epsilon}(K + 1)K}
+
\frac{2\sqrt{3+\sqrt{5}} L_H}{\mu_{\epsilon}(K + 1)}
\right)\|\bz_0 - \zstar_{\epsilon}\|^2
.
\end{align*}
Applying the same restarting as in Corollary~\ref{corr:determ_complexity}, the total number of iterates required to get within the $\epsilon$ threshold (in terms of $\norm{\zag_K - \zstar_{\epsilon}}^2$) should be 
\begin{align*}
\cO \left( 
\sqrt{\frac{L_{\epsilon}}{\mu_{\epsilon}}} 
+
\frac{L_H}{\mu_{\epsilon}}
\right)
\cdot 
\log \left(\frac{1}{\epsilon}\right)
.
\end{align*}
We note that in previous iterates $n = 0, \ldots, N-2$ in Algorithm~\ref{alg:AG-OG_with_restarting}, we have obtained a $\bz_0$ such that $\norm{\bz_0 - \zstar_{\epsilon}}^2 \leq \epsilon$. We then analyze at iteration $n = N - 1$.
Again from Equation~\eqref{eq:C-SC}, letting 
$\oz := \zstar_K = \arg\max_{\bz \in \cZ}V_{\epsilon}(\zag_K, \bz)$, we have
\begin{align}
&(K+2)(K+1) V_{\epsilon}(\zag_K, \zstar_K)
+
\left(2L + \sqrt{\frac{2}{c}}L_HK\right)
\|\bz_{K} - \zstar_K\|^2\notag
\leq 
2L \norm{\bz_0 - \zstar_K}^2 
+
\sqrt{\frac{2}{c}} L_H \sum_{k = 0}^{K-1} \left\|\bz_k - \zstar_K\right\|^2
.
\end{align}
As $V_{\epsilon}(\zag_K, \zstar_K) \geq 0$, we can apply the same boostrapping argument and derive 
\begin{align}
\frac{\mu_{\epsilon}}{2} \norm{\zag_K - \zstar_K}^2
\leq V_{\epsilon}(\zag_{K}, \zstar_K)
&\leq 
\frac{2L_{\epsilon} + \sqrt{3 + \sqrt{5}}L_HK}{K(K+1)}\norm{\bz_0 - \zstar_K}^2
.
\label{eq:needed_1}
\end{align}
On the other hand, we also have that 
\begin{align}
\frac{\mu_{\epsilon}}{2} \norm{\zag_K - \zstar_{\epsilon}}^2
\leq V_{\epsilon}(\zag_{K}, \zstar_{\epsilon})
&\leq 
\frac{2L + \sqrt{3 + \sqrt{5}}L_HK}{K(K+1)}\norm{\bz_0 - \zstar_{\epsilon}}^2
.
\label{eq:needed_2}
\end{align}
By analyzing the two inequalities~\eqref{eq:needed_1} and~\eqref{eq:needed_2}, we obtain that for sufficiently large $K$ (in an order of $\cO \left( 
\sqrt{\frac{L_{\epsilon}}{\mu_{\epsilon}}} 
+
\frac{L_H}{\mu_{\epsilon}}
\right)$) such that $\frac{4L + 2\sqrt{3 + \sqrt{5}}L_HK}{\mu_{\epsilon}K(K+1)} \leq \frac{1}{c_2}$,
\begin{align*}
\norm{\zstar_K - \zstar_{\epsilon}} 
&\leq 
\norm{\zag_K - \zstar_K}  +
\norm{\zag_K - \zstar_{\epsilon}
}
\leq 
\frac{1}{c_2} \left[ 
\norm{\bz_0 - \zstar_K}
+
\norm{\bz_0 - \zstar_{\epsilon}}
\right]
\\&\leq 
\frac{1}{c_2} \left[ 
\norm{\bz_0 - \zstar_{\epsilon}}
+
\norm{\zstar_{\epsilon} - \zstar_K}
+
\norm{\bz_0 - \zstar_{\epsilon}}
\right]
\leq 
\frac{2}{c_2 - 1}
\norm{\bz_0 - \zstar_{\epsilon}}
.
\end{align*}
Furthermore, we apply~\eqref{eq:needed_1} again and derive
\begin{align*}
\max_{\bz \in \cZ} V_{\epsilon}(\zag_K, \bz) = 
V_{\epsilon}(\zag_K, \zstar_K)
\leq 
\frac{1}{c_2}\norm{\bz_0 - \zstar_L}^2
\leq 
\frac{
\norm{\bz_0 - \zstar_K}}{c_2}
\leq 
\frac{\norm{\bz_0 - \zstar_{\epsilon}}
+
\norm{\zstar_{\epsilon} - \zstar_K}}{c_2}
\leq 
\frac{c_2 + 1}{c_2(c_2 - 1)}\norm{\bz_0 - \zstar_{\epsilon}}
.
\end{align*}
Taking $c_2 = 3$, and noting that we have obtained $\norm{\bz_0 - \zstar_{\epsilon}}^2\leq \epsilon$ in previous restarted iterates and combining the technique at the beginning of the proof of Theorem~\ref{theo:C-SC}, the total number of iterates in order to get 
$
\max _{y \in \mathcal{Y}} \cL_{\epsilon}\left(\hat{\bx}, \by\right)-\min _{\bx \in \mathcal{X}} \cL_{\epsilon}\left(\bx, \hat{\by}\right)
\leq 
\max_{z \in \cZ} V_{\epsilon}(\hat{\bz}, \bz)
\leq \epsilon 
$
is 
$
\cO \left( \sqrt{\frac{L}{\epsilon \wedge \mu_g}} + \frac{L_H}{\epsilon \wedge \mu_g} \right)\cdot \log \left(\frac{1}{\epsilon}\right)
$.
Applying a scaling reduction argument as in~\eqref{eq:scal_reduce} (the stepsize is dependent on $\epsilon$ after scaling reduction) gives a final complexity of
\begin{align*}
\cO\left(
\left(\sqrt{\frac{L_f}{\epsilon}\vee \frac{L_g}{\mu_g}} + \frac{I_{xx}}{\epsilon}\vee \frac{I_{xy}}{\sqrt{\epsilon\mu_g}} \vee \frac{I_{yy}}{\mu_g}\right)
\log\left(\frac{1}{\epsilon}\right)
\right).
\end{align*}
\end{proof}

\subsection{Proof of Theorem~\ref{theo:stoc_conv_AG-OG}}\label{sec:proof_theo_stoc_conv_AG-OG}
\begin{proof}[Proof of Theorem~\ref{theo:stoc_conv_AG-OG}]
For the stochastic case, we use the primal-dual gap function~\eqref{eq:def_gap} and proceeds in the following

\paragraph{Step 1: Estimating weighted temporal difference in squared norms.}
We mentioned in the proof of Theorem~\ref{theo_convAG-OG} that Lemma~\ref{lem:basic} holds for the stochastic case as well. Thus, we have 
\begin{align}
&
V(\zag_{k+1}, \omega_{\bz})
-
(1 - \alpha_k) V(\zag_{k}, \omega_{\bz})
\notag
\leq 
\alpha_k
\underbrace{\left\langle 
\nabla F(\zmd_{k}) + \Matrix(\bz_{k + \frac12}),
\bz_{k + \frac12} - \omega_{\bz}
\right\rangle 
}_{\mbox{I}}
+
\underbrace{\frac{L \alpha_k^2}{2} 
\left\| \bz_{k + \frac12} - \bz_{k} \right\|^2}_{\mbox{II}}
.
\tag{\ref{eq:basic}}
\end{align}
By applying Proposition~\ref{prop:PRecursion} to the iterates of Algorithm~\ref{alg:AG_OG_stoc}. Taking $\btheta = \bz_k, \bphi_1 = \bz_{k+\frac12}, \bphi_2 = \bz_{k+1}$ in Proposition~\ref{prop:PRecursion} and recalling the update rules in Algorithm~\ref{alg:AG_OG_stoc}, we obtain the following stochastic version of inequality~\eqref{eq:bound_I}:
\begin{align*}
&\eta_k \cdot \left\langle 
\nabla \tilde{F}(\zmd_{k}; \xi_k) + \nabla \tilde{\Matrix}(\bz_{k + \frac12};\zeta_{k+\frac12}),
\bz_{k + \frac12} - \omega_{\bz}
\right\rangle 
\\&\leq 
\frac12 \eta_k^2 \underbrace{\norm{\tilde{H}(\bz_{k+\frac12};\zeta_{k+\frac12}) - \tilde{H}(\bz_{k-\frac12};\zeta_{k-\frac12})}^2}_{(a)}
+
\frac12 \left[ 
\norm{\bz_{k} - \omega_{\bz}}^2 - \norm{\bz_{k+1} - \omega_{\bz}}^2 - \norm{\bz_{k+\frac12} - \bz_{k}}^2
\right]
.
\end{align*}

\paragraph{Step 2: Building and solving the recursion.}
Note that in the stochastic case, unlike Step 2 in the proof of Theorem~\ref{theo_convAG-OG}, before connecting $\norm{\bz_{k+\frac12} - \bz_{k-\frac12}}^2$ with $\norm{\bz_{k+\frac12} - \bz_k}^2$ to get an iterative rule, we need to bound the expectation of $(a)$ with additional noise first.

Throughout the rest of the proof of Theorem~\ref{theo:stoc_conv_AG-OG}, we denote
\begin{align*}
\Delta_h^{\ph}
=
\tilde{H}(\zhp; \zeta_{\ph})
-
H(\zhp)
,\qquad 
\Delta_f^{k}
=
\nabla \tilde{F}(\zmd_k; \xi_{k})
-
\nabla F(\zmd_k)
.
\end{align*}
Taking expectation over term $(a)$ in above, we use the following lemma to depict the upper bound of the quantity. The proof is delayed to~\S\ref{sec:proof_lem:stoc_nablaH}.
\begin{lemma}\label{lem:stoc_nablaH}
For any $\beta > 0$, under Assumption~\ref{assu:bounded_variance}, we have
\begin{align}\label{eq:stoc_nablaH}
\EE \norm{\tilde{H}(\zhp; \zeta_{\ph}) - \tilde{H}(\zhm; \zeta_{\mh})}^2
\leq 
(1 + \beta) L_H^2 \EE\norm{\zhp - \zhm}^2
+
\left(2 + \frac{1}{\beta}\right)\sigma_H^2
.
\end{align}
\end{lemma}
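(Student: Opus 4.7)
}
My plan is to decompose the stochastic difference into a deterministic Lipschitz-controlled piece plus mean-zero noise perturbations, then expand the square and carefully track which cross-terms vanish by conditioning. Specifically, I would write
\begin{align*}
\tilde{H}(\zhp;\zeta_{\ph})-\tilde{H}(\zhm;\zeta_{\mh})
=
[H(\zhp)-H(\zhm)]
+\Delta_h^{\ph}
-\Delta_h^{\mh},
\end{align*}
expand $\|\cdot\|^2$ into the three squared terms and the three cross inner products, and then take expectations.

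Next I would identify the filtration structure implied by Algorithm~\ref{alg:AG_OG_stoc}. Inspecting Line~\ref{line:halfupd_stoc}, the iterate $\zhp$ is measurable with respect to $\mathcal{F}_{\ph-1/2}$, the $\sigma$-algebra generated by $\{\zeta_{\ell-1/2},\xi_\ell\}_{\ell\le k}$; in particular, $\zhp$ does \emph{not} depend on the fresh sample $\zeta_{\ph}$. Combined with Assumption~\ref{assu:bounded_variance} (which yields $\EE[\Delta_h^{\ph}\mid \mathcal{F}_{\ph-1/2}]=0$ and $\EE\|\Delta_h^{\ph}\|^2\le \sigma_H^2$), two of the three cross terms then vanish under the tower property: both $\EE\langle \Delta_h^{\ph}, H(\zhp)-H(\zhm)\rangle$ and $\EE\langle \Delta_h^{\ph}, \Delta_h^{\mh}\rangle$ are zero, since $H(\zhp)-H(\zhm)$ and $\Delta_h^{\mh}$ are $\mathcal{F}_{\ph-1/2}$-measurable.

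The main obstacle, and the reason a free parameter $\beta$ appears, is the remaining cross term $-2\EE\langle H(\zhp)-H(\zhm), \Delta_h^{\mh}\rangle$. Unlike the other two, this one does not vanish: $\zhp$ depends on $\zeta_{\mh}$ via the update of $\bz_k$ and the reused gradient $\tilde H(\zhm;\zeta_{\mh})$, so $\Delta_h^{\mh}$ is correlated with $H(\zhp)-H(\zhm)$. I would control it by Young's inequality with parameter $\beta>0$:
\begin{align*}
-2\langle H(\zhp)-H(\zhm), \Delta_h^{\mh}\rangle
\le
\beta\|H(\zhp)-H(\zhm)\|^2
+\tfrac{1}{\beta}\|\Delta_h^{\mh}\|^2.
\end{align*}

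Finally I would assemble: using $L_H$-Lipschitzness of $H$ on the deterministic piece gives $\EE\|H(\zhp)-H(\zhm)\|^2\le L_H^2\EE\|\zhp-\zhm\|^2$, and the two noise squared norms contribute at most $\sigma_H^2$ each by Assumption~\ref{assu:bounded_variance}. Collecting terms yields
\begin{align*}
\EE\|\tilde H(\zhp;\zeta_{\ph})-\tilde H(\zhm;\zeta_{\mh})\|^2
\le
(1+\beta)L_H^2\,\EE\|\zhp-\zhm\|^2
+\Bigl(2+\tfrac{1}{\beta}\Bigr)\sigma_H^2,
\end{align*}
exactly as claimed. The only conceptually delicate step is recognizing which cross term must be paid for with the Young parameter versus which cross terms are eliminated for free by conditioning; everything else is expansion of a square and Lipschitz/variance bounds.
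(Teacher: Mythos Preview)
Your proposal is correct and matches the paper's approach essentially verbatim: the paper also decomposes $\tilde H(\zhp;\zeta_{\ph})-\tilde H(\zhm;\zeta_{\mh})$ into $[H(\zhp)-H(\zhm)]+\Delta_h^{\ph}-\Delta_h^{\mh}$, uses conditioning on $\zeta_{\ph}$ to drop the cross terms with $\Delta_h^{\ph}$, and applies Young's inequality with parameter $\beta$ to the remaining pair $H(\zhp)-H(\zhm)$ and $\Delta_h^{\mh}$, followed by the $L_H$-Lipschitz and variance bounds. The only cosmetic difference is that the paper groups first (writing $\|a-c\|^2+\|b\|^2$ and then applying $\|a-c\|^2\le(1+\beta)\|a\|^2+(1+\tfrac{1}{\beta})\|c\|^2$) whereas you fully expand the square and apply Young's to the single surviving cross term; the arithmetic is identical.
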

Taking $\beta = 1$ in Lemma~\ref{lem:stoc_nablaH} and bringing the result into the expectation of~\eqref{eq:basic}, we obtain that
\begin{align}\label{eq:stoc_starting_from}
&\EE V(\zag_{k+1}, \omega_{\bz})
-
(1 - \alpha_k) \EE V(\zag_{k}, \omega_{\bz})\notag
\\&\leq 
\frac{\alpha_k\eta_k}{2} \left[2L_H^2 \EE\norm{\bz_{k+\frac12}-\bz_{k-\frac12}}^2 
+ 
3\sigma_H^2\right]
+
\alpha_k\EE \left\langle 
\Delta_f^k
+
\Delta_h^{k+\frac12},
\bz_{k+\frac12} - \omega_{\bz}
\right\rangle \notag
\\&~\quad
+
\frac{L \alpha_k^2}{2} 
\EE \norm{\bz_{k + \frac12} - \bz_{k}}^2
+
\frac{\alpha_k}{2\eta_k} \EE \left[ 
\norm{\bz_{k} - \omega_{\bz}}^2 - \norm{\bz_{k+1} - \omega_{\bz}}^2 - \norm{\bz_{k+\frac12} - \bz_{k}}^2
\right]
.
\end{align}
Following the above inequality and following similar techniques as in Step 2 of the proof of Theorem~\ref{theo_convAG-OG}, we can derive the following Lemma~\ref{lem:ineq_stoc}, whose proof is delayed to~\S\ref{sec:proof_lem:ineq_stoc}.

\begin{lemma}\label{lem:ineq_stoc}
For the choice of stepsize such that $\eta_k L_H \leq \frac{\sqrt{c}}{2}$ holds for all $k$ and any constant $r > 0$, we have
\begin{align*}
&\EE V(\zag_{k+1}, \omega_{\bz})
-
(1 - \alpha_k) \EE V(\zag_{k}, \omega_{\bz})
\leq 
\frac{\alpha_k}{2\eta_k} \EE \left[ 
\norm{\bz_{k} - \omega_{\bz}}^2 - \norm{\bz_{k+1} - \omega_{\bz}}^2 
\right]
+
\frac{3\alpha_k\eta_k}{2(1-c)} \sigma_H^2 
\\&~\quad
+
2\alpha_k\eta_k L_H^2 \sum_{\ell=0}^{k} 
c^{k-\ell} 
\EE \norm{\bz_{\ell+\frac12} - \bz_{\ell}}^2
- 
\left(\frac{r\alpha_k}{2\eta_k}
-
\frac{L \alpha_k^2}{2} 
\right)
\EE \norm{\bz_{k+\frac12} - \bz_{k}}^2
+
\frac{\alpha_k\eta_k}{2(1-r)}\sigma_F^2
.
\end{align*}
\end{lemma}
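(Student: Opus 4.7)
The plan is to bound the right-hand side of the starting inequality~\eqref{eq:stoc_starting_from} term by term, addressing two sources of complication: the stochastic noise inner product $\alpha_k\EE\langle\Delta_f^k + \Delta_h^{k+\frac12}, \bz_{k+\frac12}-\omega_{\bz}\rangle$, and the displacement $\EE\|\bz_{k+\frac12}-\bz_{k-\frac12}\|^2$, which must be unrolled into a weighted sum over earlier increments $\EE\|\bz_{\ell+\frac12}-\bz_\ell\|^2$.

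For the noise inner product I would set $\omega_{\bz}=\zstar$ (deterministic) and decompose $\bz_{k+\frac12}-\omega_{\bz}=(\bz_{k+\frac12}-\bz_k)+(\bz_k-\omega_{\bz})$. Since $\bz_k$ is measurable with respect to the filtration preceding the fresh draws $\xi_k$ and $\zeta_{k+\frac12}$, the $(\bz_k-\omega_{\bz})$ contribution vanishes by unbiasedness. Furthermore, inspection of Line~\ref{line:halfupd_stoc} reveals that $\bz_{k+\frac12}$ uses $\zeta_{k-\frac12}$ rather than $\zeta_{k+\frac12}$, so $\bz_{k+\frac12}$ is independent of $\zeta_{k+\frac12}$, making $\EE\langle\Delta_h^{k+\frac12},\bz_{k+\frac12}-\bz_k\rangle=0$ by conditioning. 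The one remaining piece, $\alpha_k\EE\langle\Delta_f^k,\bz_{k+\frac12}-\bz_k\rangle$, is controlled by Young's inequality with parameter $s=(1-r)/\eta_k$ together with the variance bound $\EE\|\Delta_f^k\|^2\leq\sigma_F^2$, yielding
$$
\alpha_k\EE\langle\Delta_f^k,\bz_{k+\frac12}-\bz_k\rangle
\leq
\frac{\alpha_k\eta_k\sigma_F^2}{2(1-r)}
+
\frac{(1-r)\alpha_k}{2\eta_k}\EE\|\bz_{k+\frac12}-\bz_k\|^2.
$$
The first summand exactly matches the $\frac{\alpha_k\eta_k}{2(1-r)}\sigma_F^2$ claimed in the lemma, and the second combines with the existing $-\frac{\alpha_k}{2\eta_k}\EE\|\bz_{k+\frac12}-\bz_k\|^2$ and $\frac{L\alpha_k^2}{2}\EE\|\bz_{k+\frac12}-\bz_k\|^2$ in~\eqref{eq:stoc_starting_from} to produce the coefficient $-\bigl(\frac{r\alpha_k}{2\eta_k}-\frac{L\alpha_k^2}{2}\bigr)$ in the conclusion.

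For the displacement term I would establish a stochastic counterpart of Lemma~\ref{lem:OGDA_recurs}. Writing $\bz_k-\bz_{k-\frac12}=-\eta_{k-1}(\tilde{H}(\bz_{k-\frac12};\zeta_{k-\frac12})-\tilde{H}(\bz_{k-\frac32};\zeta_{k-\frac32}))$ and applying Lemma~\ref{lem:stoc_nablaH} with a free parameter $\beta$ chosen so that $(1+\beta)L_H^2\eta_{k-1}^2\leq c$ (which is consistent with the stepsize assumption $\eta_kL_H\leq\sqrt{c}/2$) produces the one-step geometric contraction $\EE\|\bz_k-\bz_{k-\frac12}\|^2\leq c\,\EE\|\bz_{k-\frac12}-\bz_{k-\frac32}\|^2+\kappa\eta_{k-1}^2\sigma_H^2$. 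Combined with the triangle inequality $\EE\|\bz_{k+\frac12}-\bz_{k-\frac12}\|^2\leq 2\EE\|\bz_{k+\frac12}-\bz_k\|^2+2\EE\|\bz_k-\bz_{k-\frac12}\|^2$ and unrolled across $\ell=0,\dots,k$, this yields
$$
\EE\|\bz_{k+\frac12}-\bz_{k-\frac12}\|^2
\leq
2\sum_{\ell=0}^{k}c^{k-\ell}\EE\|\bz_{\ell+\frac12}-\bz_\ell\|^2+\frac{\kappa'\sigma_H^2}{1-c}.
$$
Multiplying by $\alpha_k\eta_k L_H^2$, the deterministic portion matches the $2\alpha_k\eta_k L_H^2\sum_\ell c^{k-\ell}\EE\|\bz_{\ell+\frac12}-\bz_\ell\|^2$ summand, and the residual noise term---together with the existing $\frac{3\alpha_k\eta_k}{2}\sigma_H^2$ from~\eqref{eq:stoc_starting_from} and the stepsize bound $\eta_kL_H\leq\sqrt{c}/2$---upgrades the $\sigma_H^2$ prefactor to $\frac{3\alpha_k\eta_k}{2(1-c)}$.

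The main obstacle is the careful constant-tracking in the stochastic extension of Lemma~\ref{lem:OGDA_recurs}: the free parameter $\beta$ in Lemma~\ref{lem:stoc_nablaH} must be coordinated with the decay rate $c$ so that the geometric contraction preserves the deterministic rate, while the accumulated variance residual telescopes precisely into the claimed $(1-c)^{-1}$ inflation of the $\sigma_H^2$ coefficient. Once these constants are pinned down, the remainder of the proof is direct substitution into~\eqref{eq:stoc_starting_from} and term-by-term comparison against the claimed inequality.
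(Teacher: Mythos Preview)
Your proposal is correct and follows essentially the same route as the paper: identical handling of the noise inner product (conditioning to kill the $\Delta_h^{k+\frac12}$ and $\langle\Delta_f^k,\bz_k-\omega_{\bz}\rangle$ pieces, then Young's inequality with split parameter $r$), and the same stochastic unrolling of $\EE\|\bz_{k+\frac12}-\bz_{k-\frac12}\|^2$, where the paper simply fixes your free parameter at $\beta=1$ so that $4\eta_{k-1}^2L_H^2\leq c$ delivers the geometric rate and the accumulated $6\eta_{\ell-1}^2\sigma_H^2$ residuals collapse via $L_H^2\eta_\ell^2\leq c/4$ into the claimed $\tfrac{3\alpha_k\eta_k}{2(1-c)}\sigma_H^2$ term. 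One small over-restriction: you need not set $\omega_{\bz}=\zstar$, since $\bz_{k+\frac12}$ is already measurable with respect to the $\sigma$-algebra preceding $\zeta_{k+\frac12}$ and the lemma (and the paper's proof) is stated for an arbitrary deterministic $\omega_{\bz}$.
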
 

Recalling that $\alpha_k = \frac{2}{k+2}$, we have
\begin{align*}
&\EE V(\zag_{k+1}, \omega_{\bz})
-
\frac{k}{k+2} \EE V(\zag_{k}, \omega_{\bz})
\leq 
\frac{1}{\eta_k(k+2)} \EE \left[ 
\norm{\bz_{k} - \omega_{\bz}}^2 - \norm{\bz_{k+1} - \omega_{\bz}}^2 
\right]
\\&~\quad
+
\frac{4\eta_k L_H^2}{k+2} \sum_{\ell=0}^{k} 
c^{k-\ell} 
\EE \norm{\bz_{\ell+\frac12} - \bz_{\ell}}^2
- 
\left(\frac{r}{\eta_k(k+2)}
-
\frac{2L}{(k+2)^2} 
\right)
\EE \norm{\bz_{k+\frac12} - \bz_{k}}^2
\\&~\quad +
\frac{3\eta_k}{(1-c)(k+2)} \sigma_H^2 
+
\frac{\eta_k}{(1-r)(k+2)}\sigma_F^2
.
\end{align*}
Multiplying both sides by $(k+2)^2$ and taking $r = \frac12$, we obtain
\begin{align*}
&(k+2)^2 \EE V(\zag_{k+1}, \omega_{\bz})
-
[(k+1)^2 - 1] \EE V(\zag_{k}, \omega_{\bz})
\\&\leq 
\frac{k+2}{\eta_k} \EE \left[ 
\norm{\bz_k - \omega_{\bz}}^2 
-
\norm{\zp - \omega_{\bz}}^2
\right]
+
4\eta_k L_H^2(k+2)
\sum_{\ell=0}^{k} 
c^{k-\ell} 
\EE \norm{\bz_{\ell+\frac12} - \bz_{\ell}}^2
\\&~\quad - 
\left(\frac{r(k+2)}{\eta_k} - 2L\right) \EE \norm{\bz_{\ph} - \bz_k}^2 
+
\frac{3\eta_k(k+2)}{1-c}  \sigma_H^2
+
\frac{\eta_k(k+2)}{1-r} \sigma_F^2
\\&\leq 
\frac{k+2}{\eta_k} \EE \left[ 
\norm{\bz_k - \omega_{\bz}}^2 
-
\norm{\zp - \omega_{\bz}}^2
\right]
+
4\eta_k L_H^2(k+2)
\sum_{\ell=0}^{k} 
c^{k-\ell} 
\EE \norm{\bz_{\ell+\frac12} - \bz_{\ell}}^2
\\&~\quad - 
\left(\frac{k+2}{2\eta_k} - 2L\right) \EE \norm{\bz_{\ph} - \bz_k}^2 
+
\frac{3\eta_k(k+2)}{1-c}  \sigma_H^2
+
2\eta_k(k+2) \sigma_F^2
.
\end{align*}
Telescoping over $k=0, 1, \ldots K-1$ and using the same techniques as in the proof of Theorem~\ref{theo_convAG-OG}, we have for $\frac{k+2}{2\eta_k} \geq  2L + \frac{1}{\sqrt{c}} L_H(k+2)$ and $c = \frac{1}{2 + \sqrt{2}}$ ($c/(1-c) = \sqrt{2} - 1$, and recall $\sigma^2 = 3\sqrt{2} \sigma_H^2 + 2\sigma_F^2$ so that
\begin{align}\label{eq:stoc_control_V}
&\left[(K + 1)^2 - 1 \right] \EE V(\zag_{K}, \zstar)
+
\frac{K+1}{\eta_{K-1}}
\EE \norm{\bz_{K} - \zstar}^2\notag
\\&\leq 
\frac{2}{\eta_0} \EE \norm{\bz_0 - \zstar}^2 
+
\frac{2}{\sqrt{c}} L_H \sum_{k = 1}^{K-1} \EE \left\|\bz_k - \zstar \right\|^2
+
\sum_{k=0}^{K-1} (k+2) \eta_k \sigma^2
-
\sum_{k = 0}^{K-1} \EE V(\zag_{k+1}, \zstar)
.
\end{align}

\paragraph{Step 3: Proving $\bz_k$ stays within a neighbourhood of $\zstar$.}
We introduce the following Lemma~\ref{lem:stoc_boundedness}, whose proof is in~\S\ref{sec:proof_lem_stoc_boundedness}
\begin{lemma}\label{lem:stoc_boundedness}
Given the maximum epoch number $K>0$ and stepsize sequence $\{\eta_k\}_{k\in[K]}$ satisfying

\begin{enumerate}[label=(\alph*)]
\item\label{item:a}
$\frac{k+2}{\eta_k} - \frac{k+1}{\eta_{k-1}} = \frac{2}{\sqrt{c}}L_H$ for any $k < K$, we have for $\forall k \in [K-1]$:
\begin{align*}
\norm{\bz_{k} - \zstar}^2
\leq 
\norm{\bz_0 - \zstar}^2 
+
\frac{\eta_0}{2} \sum_{k=0}^{K-1} (k+2) \eta_k \sigma^2
.
\end{align*}

\item\label{item:b}
In addition if $\eta_k \leq \frac{k+2}{D}$ for $\forall k \in [K-1]$ where $D$ will be specified in~\ref{item:c} and taking $A(K) := \sqrt{(K+1)(K+2)(2K+3)/6}$, we have
\begin{align}\label{eq:stoc_bounded}
\norm{\bz_k - \zstar}^2 
&\leq 
\norm{\bz_0 - \zstar}^2
+
\frac{A(K)^2\sigma^2}{D^2}
.
\end{align}

\item\label{item:c}
Taking $D = \frac{\sigma}{C} \frac{A(K)}{\sqrt{\EE \norm{\bz_0 - \zstar}^2}}$ for some absolute constant $C > 0$ , bound~\eqref{eq:stoc_bounded} reduces to
\begin{align}\label{eq:stoc_bounded_final}
\norm{\bz_k - \zstar}^2
\leq 
\left(1 + C^2 \right)\norm{\bz_0 - \zstar}^2
.
\end{align}
\end{enumerate}
\end{lemma}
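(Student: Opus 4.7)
The plan is to prove the three claims of Lemma~\ref{lem:stoc_boundedness} by a bootstrap induction starting from the inequality~\eqref{eq:stoc_control_V} already established in Step~2 of the proof of Theorem~\ref{theo:stoc_conv_AG-OG}. The same telescoping derivation that produced~\eqref{eq:stoc_control_V} applies when the sum is truncated at any intermediate index $k<K$, and Lemma~\ref{lemm_QuanBdd} guarantees $\EE V(\zag_{j+1},\zstar)\ge 0$ so those terms can be discarded; after relaxing the noise sum $\sum_{j=0}^{k-1}(j+2)\eta_j\sigma^2$ to the full sum $S:=\sum_{j=0}^{K-1}(j+2)\eta_j\sigma^2$ so that the right-hand side no longer depends on $k$ outside the inductive contribution, one obtains the skeleton inequality
\begin{equation*}
\frac{k+1}{\eta_{k-1}}\,\EE\|\bz_{k}-\zstar\|^2
\;\le\;
\frac{2}{\eta_0}\,\EE\|\bz_{0}-\zstar\|^2
+\frac{2 L_H}{\sqrt{c}}\sum_{j=1}^{k-1}\EE\|\bz_{j}-\zstar\|^2
+S.
\end{equation*}

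For part~(a), the stepsize condition $(k+2)/\eta_k-(k+1)/\eta_{k-1}=2L_H/\sqrt{c}$ makes the sequence $\{(k+2)/\eta_k\}$ arithmetic with common difference $2L_H/\sqrt{c}$, so telescoping gives the closed-form identity $(k+1)/\eta_{k-1}=2/\eta_0+(k-1)\cdot 2L_H/\sqrt{c}$. With $R:=\EE\|\bz_0-\zstar\|^2+\tfrac{\eta_0}{2}S$, the plan is to induct on $k$ with hypothesis $\EE\|\bz_j-\zstar\|^2\le R$ for all $j<k$; the base case $k=0$ is immediate from $R\ge\EE\|\bz_0-\zstar\|^2$. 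Substituting the inductive hypothesis into the skeleton inequality and invoking the arithmetic identity reduces the target bound $\EE\|\bz_k-\zstar\|^2\le R$ to the single algebraic inequality $\tfrac{2}{\eta_0}\EE\|\bz_0-\zstar\|^2+S\le \tfrac{2}{\eta_0}R$, which holds by the very definition of $R$; this closes the induction.

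Part~(b) then follows by routine estimation: $\eta_k\le(k+2)/D$ gives both $\eta_0\le 2/D$ and $(k+2)\eta_k\le(k+2)^2/D$, hence $\tfrac{\eta_0}{2}S\le \tfrac{\sigma^2}{D^2}\sum_{k=0}^{K-1}(k+2)^2\le \tfrac{\sigma^2}{D^2}\cdot\tfrac{(K+1)(K+2)(2K+3)}{6}=A(K)^2\sigma^2/D^2$ via the standard sum-of-squares formula, and inserting this into part~(a) gives~\eqref{eq:stoc_bounded}. Part~(c) is then pure algebraic substitution: with $D=\sigma A(K)/(C\sqrt{\EE\|\bz_0-\zstar\|^2})$ we have $A(K)^2\sigma^2/D^2=C^2\EE\|\bz_0-\zstar\|^2$, producing~\eqref{eq:stoc_bounded_final}. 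The only subtle step is the algebraic matching in part~(a)---the prefactor $2L_H/\sqrt{c}$ in front of the inductive sum is exactly the common difference telescoped by $(k+1)/\eta_{k-1}$---which is precisely why the stepsize condition is imposed; once this observation is made, everything else is mechanical.
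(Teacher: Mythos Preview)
Your proof is correct and takes a genuinely different (and cleaner) route than the paper for part~(a). The paper does not run the induction you describe. Instead, starting from the same skeleton inequality~\eqref{eq:stoc_bound_midstep}, it rewrites $\EE\|\bz_K-\zstar\|^2$ as a difference of partial sums $\sum_{k=1}^K-\sum_{k=1}^{K-1}$, absorbs the coefficient $\tfrac{2}{\sqrt{c}}L_H$ via the arithmetic step condition to obtain a recursion for the weighted sums $\tfrac{\eta_K}{K+2}\sum_{k=1}^K\EE\|\bz_k-\zstar\|^2$, and then telescopes using the partial-fraction identity $\tfrac{\eta_{K-1}\eta_K}{(K+1)(K+2)}=\tfrac{\sqrt{c}}{2L_H}\bigl(\tfrac{\eta_{K-1}}{K+1}-\tfrac{\eta_K}{K+2}\bigr)$. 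Only after bounding the full partial sum does the paper substitute back into~\eqref{eq:stoc_bound_midstep} to recover the same bound $R=\EE\|\bz_0-\zstar\|^2+\tfrac{\eta_0}{2}S$.

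Your strong-induction argument is more direct: the observation that $(k+1)/\eta_{k-1}=2/\eta_0+(k-1)\cdot 2L_H/\sqrt{c}$ lets the inductive contribution $\tfrac{2L_H}{\sqrt{c}}(k-1)R$ cancel exactly against the growth of the left-hand prefactor, reducing the step to the single inequality $\tfrac{2}{\eta_0}\EE\|\bz_0-\zstar\|^2+S\le\tfrac{2}{\eta_0}R$, which holds with equality. This is precisely the discrete-Gronwall shortcut that the paper's partial-fraction machinery achieves in a roundabout way; your version is shorter and makes the role of the stepsize condition more transparent. The paper's route does yield, as a byproduct, an explicit bound on the cumulative sum $\sum_{k=1}^K\EE\|\bz_k-\zstar\|^2$, but that quantity is not used downstream, so nothing is lost. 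Parts~(b) and~(c) are handled identically in both.
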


\paragraph{Step 4: Combining everything together.}
Combining the choice of stepsize $\eta_k$ in~\ref{item:a},~\ref{item:b} in Lemma~\ref{lem:stoc_boundedness} and $
\frac{k+2}{2\eta_k} \geq 2L + \frac{1}{\sqrt{c}} L_H(k+2)$, and bound~\eqref{eq:stoc_control_V} with Eq.~\eqref{eq:stoc_bounded_final}, by rearranging the terms again, we conclude that for $
\eta_k = \frac{k+2}{4L + D + 4\sqrt{2+\sqrt{2}} L_H(k+2)}
$,
\begin{align*}
(K + 1)^2 \EE V(\zag_{K}, \zstar)
&\leq 
\left(4L + 2\sqrt{2+\sqrt{2}}(K+1)\left(1+C^2\right) L_H\right)
\EE \norm{\bz_0 - \zstar}^2 
\\&~\quad
+
\left(C + \frac1C\right) \sigma A(K) \sqrt{\EE\norm{\bz_0  - \zstar}^2}
.
\end{align*}
Dividing both sides by $(K+1)^2$ and noting that $V(\zag_K, \zstar) \geq \frac{\mu}{2} \EE \norm{\zag_K - \zstar}^2$, we have
\begin{align*}
\EE \norm{\zag_K - \zstar}^2
\leq 
\left[
\frac{8L}{\mu(K+1)^2}
+
\frac{7.4(1+C^2)L_H}{\mu(K+1)}
\right]
\EE \norm{\bz_0 - \zstar}^2
+
\frac{2(C + \frac1C) \sigma }{\mu\sqrt{K+1}} \sqrt{\EE\norm{\bz_0 - \zstar}^2}
,
\end{align*}
hence concluding the entire proof of Theorem~\ref{theo:stoc_conv_AG-OG}.

\end{proof}

\section{Proof of Bilinear Game Cases}\label{sec:proof_bilinear}

\subsection{Proof of Theorem~\ref{theo:main_bilinear}}\label{sec:theo_main_bilinear}
\begin{proof}[Proof of Theorem~\ref{theo:main_bilinear}]\label{sec:proof_theo_main_bilinear}

\paragraph{Step 1: Non-expansiveness of OGDA last-iterate.}
We start by the non-expansiveness Lemma, whose proof is in~\S\ref{sec:proof_lem_deter_bound_bilinear}.
\begin{lemma}[Bounded Iterates]\label{lem:deter_bound_bilinear}
Following~\eqref{eq:AG-OG-I}, at any iterate $k < K$, $\bz_k$ stays within the region defined by the initialization $\bz_0$:
\begin{align*}
\norm{\bz_k - \zstar} 
\leq 
\norm{\bz_0 - \zstar}
,
\end{align*}
where we recall that $\zstar = [\xholder^*; \yholder^*]$ denotes the unique solution of Problem~\eqref{eq:Minmax_bilinear} with $\nabla f, \nabla g = 0$ and $I$ defined in~\eqref{eq:def_H_bilinear}.
\end{lemma}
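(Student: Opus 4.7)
The plan is to specialize and sharpen the non-expansiveness argument of Lemma~\ref{lem:deter_bound} by exploiting two features unique to the bilinear game setting: (i) $f=g=0$, so $\nabla F \equiv 0$ and Algorithm~\ref{alg:AG_OG} collapses to the OGDA iteration in~\eqref{eq:AG-OG-I} in which the averaged sequence $\zag_k$ decouples from $\bz_k, \bz_{k+\frac12}$; and (ii) the operator $H(\bz) = [\mathbf{B}\by+\bu_{\bx};\,-\mathbf{B}^\top\bx-\bu_{\by}]$ is affine with skew-symmetric linear part $\mathbf{J}$, so that $\langle H(\bz)-H(\bz'),\bz-\bz'\rangle = 0$ identically rather than merely $\geq 0$.

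First I would expand the squared distance from the update $\bz_{k+1} = \bz_k - \eta H(\bz_{k+\frac12})$ and use $H(\zstar) = 0$ together with the rewriting $\bz_k - \bz_{k+\frac12} = \eta H(\bz_{k-\frac12})$. Skew-symmetry kills $\langle H(\bz_{k+\frac12}),\bz_{k+\frac12}-\zstar\rangle$ exactly, and the algebraic identity $\|a\|^2 - 2\langle a,b\rangle = \|a-b\|^2 - \|b\|^2$ with $a = H(\bz_{k+\frac12})$, $b = H(\bz_{k-\frac12})$ yields the equality
\begin{align*}
\|\bz_{k+1}-\zstar\|^2
=
\|\bz_k-\zstar\|^2
+ \eta^2\|H(\bz_{k+\frac12})-H(\bz_{k-\frac12})\|^2
- \eta^2\|H(\bz_{k-\frac12})\|^2.
\end{align*}
Two successive OGDA updates give $\bz_{k+\frac12}-\bz_{k-\frac12} = \eta H(\bz_{k-\frac32}) - 2\eta H(\bz_{k-\frac12})$, so the Lipschitz bound $\|H(\bz_{k+\frac12})-H(\bz_{k-\frac12})\| \leq L_H\|\bz_{k+\frac12}-\bz_{k-\frac12}\|$ converts the middle term into quantities involving $\|H(\bz_{k-\frac12})\|^2$ and $\|H(\bz_{k-\frac32})\|^2$.

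Next I would introduce a Lyapunov potential of the form $\Psi_k = \|\bz_k-\zstar\|^2 + A\eta^2\|H(\bz_{k-\frac12})\|^2$ with constant $A>0$ tuned to $\eta L_H = \tfrac12$, so that the coefficients of $\|H(\bz_{k+\frac12})\|^2$ and $\|H(\bz_{k-\frac12})\|^2$ in the difference $\Psi_{k+1}-\Psi_k$ become simultaneously non-positive; this is the bilinear analogue of the $\mathrm{III}_1, \mathrm{III}_2$ cancellation used in the proof of Theorem~\ref{theo_convAG-OG}. The initialization $\bz_{-\frac12}=\bz_0$ produces $H(\bz_{-\frac12})=H(\bz_0)$, an initial slack that can be absorbed into $\|\bz_0-\zstar\|^2$ through the Lipschitz estimate $\|H(\bz_0)\|\leq L_H\|\bz_0-\zstar\|$ together with $\eta L_H = \tfrac12$. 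Telescoping the potential inequality from $k=0$ to $k-1$ then delivers $\|\bz_k-\zstar\|^2 \leq \Psi_k \leq \Psi_0 \leq \|\bz_0-\zstar\|^2$.

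The principal obstacle is obtaining the tight unity prefactor: the classical OGDA analysis~\citep{mokhtari2020convergence} loses a factor of $\sqrt{2}$ essentially because $\|a-b\|^2 \leq 2\|a\|^2 + 2\|b\|^2$ is too loose when applied to $\|H(\bz_{k+\frac12})-H(\bz_{k-\frac12})\|^2$. Saving this factor of $\sqrt{2}$ requires either a more refined Lyapunov function that retains both consecutive $H$-values with matched coefficients, or a direct exploitation of the skew-symmetry of $\mathbf{J}$ to upgrade a Cauchy--Schwarz bound to an identity. I expect the former route, in parallel with the general argument of Lemma~\ref{lem:deter_bound}, to be the more tractable one and to pinpoint $\eta = 1/(2L_H)$ as the precise threshold at which the coefficient comparison in $\Psi_{k+1}-\Psi_k$ closes.
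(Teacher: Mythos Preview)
Your opening identity
\[
\|\bz_{k+1}-\zstar\|^2
= \|\bz_k-\zstar\|^2 + \eta^2\|H(\bz_{k+\frac12})-H(\bz_{k-\frac12})\|^2 - \eta^2\|H(\bz_{k-\frac12})\|^2
\]
is correct and exploits the skew-symmetry of the bilinear operator nicely; after one Lipschitz step it reproduces exactly the inequality the paper extracts from Proposition~\ref{prop:PRecursion}. The gap is in the Lyapunov step. With $\Psi_k = \|\bz_k-\zstar\|^2 + A\eta^2\|H(\bz_{k-\frac12})\|^2$ the difference $\Psi_{k+1}-\Psi_k$ contains the term $+A\eta^2\|H(\bz_{k+\frac12})\|^2$ with a \emph{strictly positive} coefficient for every $A>0$; nothing in $\Psi_k$ cancels it. If you try to absorb it via Lipschitz and $\bz_{k+\frac12}-\bz_{k-\frac12}=\eta H(\bz_{k-\frac32})-2\eta H(\bz_{k-\frac12})$, you only push the problem to $\|H(\bz_{k-\frac32})\|^2$, which is again absent from $\Psi_k$, so the one-step telescope never closes. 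There is a second issue at the base case: your Lipschitz estimate yields $\Psi_0\le(1+A\eta^2L_H^2)\|\bz_0-\zstar\|^2=(1+A/4)\|\bz_0-\zstar\|^2$, not $\Psi_0\le\|\bz_0-\zstar\|^2$ as you wrote, so even a working telescope would lose the unity prefactor.

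The paper resolves both problems by tracking a \emph{half-integer} increment instead of $\eta^2\|H(\bz_{k-\frac12})\|^2$: its Lyapunov is $\Phi_k=\|\bz_k-\zstar\|^2+\tfrac14\|\bz_{k-\frac12}-\bz_{k-\frac32}\|^2$. The key observation is that subtracting the two updates emanating from $\bz_{k-1}$ gives $\|\bz_k-\bz_{k-\frac12}\|\le \eta L_H\|\bz_{k-\frac12}-\bz_{k-\frac32}\|$, so Young's inequality on $\|\bz_{k+\frac12}-\bz_{k-\frac12}\|^2\le 2\|\bz_{k+\frac12}-\bz_k\|^2+2\|\bz_k-\bz_{k-\frac12}\|^2$ produces exactly the index-shifted copy of the auxiliary term. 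At $\eta=1/(2L_H)$ the coefficients match to give $\Phi_{k+1}\le\Phi_k$, and the initialization $\bz_{-\frac12}=\bz_{-\frac32}=\bz_0$ makes the auxiliary term in $\Phi_0$ vanish identically, so $\Phi_0=\|\bz_0-\zstar\|^2$ exactly. This is the ``more refined Lyapunov'' you allude to, but the specific choice of the half-integer difference (rather than any combination of $H$-values) is what makes both the telescope and the initialization close with constant~$1$.
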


By Lemma~\ref{lem:deter_bound_bilinear}, for any $0 \leq k < K$, we have
\begin{align*}
\norm{\bz_k - \zstar} \leq \norm{\bz_0 - \zstar}
.
\end{align*}

\paragraph{Step 2:}
Recalling that we take $\alpha_k = \frac{2}{k+2}$in~\eqref{eq:agog_ag_step} of~\eqref{eq:AG-OG-I}. Thus, we obtain the following
\begin{align*}
\zag_{k+1}
&=
\frac{k}{k+2} \zag_k + \frac{2}{k+2} \zholder_{k+\frac12}
.
\end{align*}
Subtracting both sides by $\zstar$ and multiplying both sides by $(k+1)(k+2)$, we have
\begin{align*}
(k+1)(k+2) \left(\zag_{k+1} - \zstar\right) 
&=
k(k+1) \left(\zag_k - \zstar\right) + 2(k+1) \left(\zholder_{k+\frac12} - \zstar\right)
.
\end{align*}
Telescoping over $k = 0, \ldots, K-1$ and we conclude that
\begin{align}\label{eq:recurs}
K(K+1) \left(\zag_{K} - \zstar \right)
&= 
2 \sum_{k=0}^{K-1} (k+1) \left(\zholder_{k+\frac12} - \zstar \right)
.
\end{align}
Moreover, according to the update rule~\eqref{eq:agog_extra_step2}, we have that 
\begin{align}\label{eq:recurs_2}
K z_K - \sum_{k=0}^{K-1} z_k 
=
\sum_{k=0}^{K-1}(k+1)\zholder_{k+1} - (k+1) \zholder_k 
=
\sum_{k=0}^{K-1}\eta_k (k+1) H(\zholder_{k+\frac12})
.
\end{align}
Recalling that $\eta_k = \frac{1}{2L_H}$, combining~\eqref{eq:recurs_2} with~\eqref{eq:recurs} and taking the squared norm on both sides, we conclude that
\begin{align}
\norm{K(K+1) \left(\zag_{K} - \zstar\right)}^2
&= 
\norm{2 \sum_{k=0}^{K-1} (k+1) \left(\zholder_{k+\frac12} - \zstar\right)}^2
\leq 
\frac{1}{\lambda_{\min}(\mathbf{B}^\top \mathbf{B})}\norm{2 \sum_{k=0}^{K-1} (k+1) H(\zholder_{k+\frac12})}^2\notag
\\&=
\frac{16L_H^2}{\lambda_{\min}(\mathbf{B}^\top \mathbf{B})} 
\norm{K\bz_K - \sum_{k=0}^{K-1}\bz_k}^2
=
\frac{16L_H^2}{\lambda_{\min}(\mathbf{B}^\top \mathbf{B})} 
\norm{\sum_{k=0}^{K-1} \left[\left(z_K - \zstar\right) - \left(\bz_k - \zstar\right)\right]}^2\notag
\\&\leq 
\frac{16L_H^2}{\lambda_{\min}(\mathbf{B}^\top \mathbf{B})} 
K\cdot 
\sum_{k=0}^{K-1} 
\left[
2\norm{\bz_K - \zstar}^2
+
2\norm{\bz_k - \zstar}^2
\right]
.
\label{eq:OGDA_proof_final}
\end{align}
Applying non-expansiveness in Lemma~\ref{lem:deter_bound_bilinear} in~\eqref{eq:OGDA_proof_final}, bringing $L_H = \sqrt{\lambda_{\max}(\mathbf{B}^\top \mathbf{B})} $and rearranging, we conclude that 
\begin{align*}
\norm{\zag_K - \zstar}^2
\leq 
\frac{64 \lambda_{\max}(\mathbf{B}^\top \mathbf{B})}{\lambda_{\min}(\mathbf{B}^\top \mathbf{B}) (K+1)^2} \norm{\zholder_0 - \zstar}^2
.
\end{align*}
Restarting every $\left\lceil 8\sqrt{\frac{e\lambda_{\max}(\mathbf{B}^\top \mathbf{B})}{\lambda_{\min}(\mathbf{B}^\top \mathbf{B})}}\right\rceil$ iterates for a total of $\log\left( \frac{\norm{\zholder_0 - \zstar}}{\epsilon}\right)$ times, we obtain the final sample complexity of 
\begin{align*}
\cO \left(\sqrt{\frac{\lambda_{\max}(B^\top B)}{\lambda_{\min}(B^\top B)}}\log\left(\frac{1}{\epsilon} \right) \right)
\end{align*}
for the nonstochastic setting.
\end{proof}

\subsection{Stochastic Bilinear Game Case}\label{sec:stoc_bilinear}
For the stochastic AG-OG with restarting for bilinear case, our iteration spells
\begin{subequations}\label{eq:AG-OG-I_stoc}
\begin{numcases}{}
\bz_{k+\frac12}
&=
$
\bz_k - \eta_k \tilde{H}(\bz_{k-\frac12}; \zeta_{k-\frac12})
$,           \label{eq:agog_extra_step1_stoc}   
\\
\zag_{k+1}
&=
$
(1 - \alpha_k)\zag_{k} + \alpha_k \bz_{k+\frac12}
$,
\label{eq:agog_ag_step_stoc}
\\
\bz_{k+1}
&=
$
\bz_k - \eta_k\tilde{H}(\bz_{k+\frac12}; \zeta_{k+\frac12})
$.
\label{eq:agog_extra_step2_stoc} 
\end{numcases}
\end{subequations}
We are able to derive the following theorem.
\begin{theorem}[Convergence of stochastic AG-OG, bilinear case)]\label{theo:main_bilinear_stoc}
When specified to the stochastic bilinear game case, setting the parameters as $\alpha_k = \frac{2}{k+2}$ and $\eta_k = \frac{1}{2L_H}$, the output of update rules~\eqref{eq:AG-OG-I_stoc} satisfies for any $\gamma > 0$,
\begin{align*}
\EE \norm{\zag_K - \zstar}^2
&\leq 
(1 + \gamma) \left(\frac{128L_H^2}{\lambda_{\min}(\mathbf{B}^\top \mathbf{B}) (K+1)^2} \norm{\bz_0 - \zstar}^2 
+ \frac{48 }{\lambda_{\min}(\mathbf{B}^\top \mathbf{B}) (K+1)}
\sigma_H^2\right)
+
\frac{4(1+\frac{1}{\gamma})\sigma_H^2}{3\lambda_{\min}(\mathbf{B}^\top \mathbf{B})K}
.
\end{align*}
Moreover, by operating scheduled restarting technique, it yields a complexity result of
$$
O\left(\sqrt{\frac{\lambda_{\max }\left(\mathbf{B}^{\top} \mathbf{B}\right)}{\lambda_{\min }\left(\mathbf{B B}^{\top}\right)}} \log \left(\frac{\sqrt[4]{\lambda_{\min }\left(\mathbf{B B}^{\top}\right) \lambda_{\max }\left(\mathbf{B}^{\top} \mathbf{B}\right)}}{\sigma_{H}}\right)+\frac{\sigma_{H}^2}{\lambda_{\min }\left(\mathbf{B B}^{\top}\right) \varepsilon^2}\right)
$$
\end{theorem}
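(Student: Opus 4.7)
The plan is to parallel the proof of Theorem~\ref{theo:main_bilinear} while adding two stochastic ingredients: a variance-aware nonexpansiveness lemma for $\bz_k$, and a martingale bound on the accumulated gradient noise. The algebraic identity obtained by telescoping the averaging step~\eqref{eq:agog_ag_step_stoc} is purely deterministic, so it still yields
\begin{equation*}
K(K+1)\bigl(\zag_K - \zstar\bigr) \;=\; 2 \sum_{k=0}^{K-1}(k+1)\bigl(\bz_{k+\frac12} - \zstar\bigr).
\end{equation*}
Using linearity of $H$ with $H(\zstar)=0$, we have $\|\bz_{k+\frac12} - \zstar\|^2 \le \lambda_{\min}(\mathbf{B}^{\top}\mathbf{B})^{-1}\|H(\bz_{k+\frac12})\|^2$, which reduces the theorem to controlling $\EE\|\sum (k+1) H(\bz_{k+\frac12})\|^2$.

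Define the gradient noise $\xi_{k+\frac12} := \tilde{H}(\bz_{k+\frac12};\zeta_{k+\frac12}) - H(\bz_{k+\frac12})$, a martingale difference with $\EE\|\xi_{k+\frac12}\|^2 \le \sigma_H^2$ that is conditionally mean-zero given the past filtration. Rewriting update~\eqref{eq:agog_extra_step2_stoc} as $H(\bz_{k+\frac12}) = \eta_k^{-1}(\bz_k - \bz_{k+1}) - \xi_{k+\frac12}$ with $\eta_k = 1/(2L_H)$ and Abel-summing gives
\begin{equation*}
\sum_{k=0}^{K-1}(k+1)H(\bz_{k+\frac12}) \;=\; -2L_H\Big(K\bz_K - \sum_{k=0}^{K-1}\bz_k\Big) \;-\; \sum_{k=0}^{K-1}(k+1)\xi_{k+\frac12}.
\end{equation*}
Squaring, applying Young's inequality with an arbitrary parameter $\gamma>0$ to separate ``signal'' and ``noise,'' and using orthogonality of martingale differences to replace $\EE\|\sum(k+1)\xi_{k+\frac12}\|^2$ by $\sum (k+1)^2\EE\|\xi_{k+\frac12}\|^2 \le \sigma_H^2 K(K+1)(2K+1)/6$, the variance term produces the $(1+\gamma^{-1})\sigma_H^2/K$ summand in the bound. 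The signal term requires controlling $\EE\|K\bz_K - \sum \bz_k\|^2 \le 2K \sum_k(\EE\|\bz_K-\zstar\|^2 + \EE\|\bz_k-\zstar\|^2)$, which is exactly where the stochastic nonexpansiveness enters.

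The main obstacle is establishing the stochastic analog of Lemma~\ref{lem:deter_bound_bilinear}, namely a uniform-in-$k$ bound of the form $\EE\|\bz_k - \zstar\|^2 \lesssim \|\bz_0 - \zstar\|^2 + \sigma_H^2 /L_H^2$. The difficulty is that the past-extragradient structure in~\eqref{eq:agog_extra_step1_stoc} reinjects the noise from $\zeta_{k-\frac12}$ into the integer iterate $\bz_{k+1}$, so expanding $\|\bz_{k+1}-\zstar\|^2$ produces several cross terms. The plan is to mirror the deterministic bootstrapping argument of Lemma~\ref{lem:deter_bound_bilinear}: isolate the pure OGDA recursion $\|\bz_{k+1}-\zstar\|^2 = \|\bz_k-\zstar\|^2 - 2\eta_k\langle \tilde{H}(\bz_{k+\frac12};\zeta_{k+\frac12}), \bz_k - \zstar\rangle + \eta_k^2\|\tilde{H}(\bz_{k+\frac12};\zeta_{k+\frac12})\|^2$, exploit the skew-symmetry identity $\langle H(\bz), \bz-\zstar\rangle = 0$ (valid because $H$ is the bilinear operator at $\zstar$), take conditional expectations to annihilate the martingale cross term, and handle the remaining quadratic cross terms of OGDA exactly as in the deterministic case while adding a controlled $\sigma_H^2$ surplus at each step. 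Choosing $\eta_k L_H \le 1/2$ keeps this recursion contractive up to the variance floor.

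Once the bound on $\EE\|\zag_K-\zstar\|^2$ is established, the complexity claim follows by scheduled restarting as in Corollary~\ref{corr:determ_complexity}: choose an epoch length $K \asymp \sqrt{\lambda_{\max}(\mathbf{B}^{\top}\mathbf{B})/\lambda_{\min}(\mathbf{B}\mathbf{B}^{\top})}$ so that the bias term contracts geometrically, iterate the restarts $O(\log(\tfrac{1}{\varepsilon}))$ times until the bias is swamped by the variance floor, and then run a long final epoch whose length is dictated by setting the variance term $\sigma_H^2/(\lambda_{\min}(\mathbf{B}\mathbf{B}^{\top})K) = \varepsilon^2$. Balancing gives the claimed $O\bigl(\sqrt{\lambda_{\max}/\lambda_{\min}}\log(\cdot) + \sigma_H^2/(\lambda_{\min}\varepsilon^2)\bigr)$ complexity.
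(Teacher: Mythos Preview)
Your overall architecture matches the paper's proof: the telescoping of~\eqref{eq:agog_ag_step_stoc}, the passage through $\lambda_{\min}(\mathbf{B}^\top\mathbf{B})^{-1}$, the Abel summation of~\eqref{eq:agog_extra_step2_stoc} to extract $K\bz_K-\sum_k\bz_k$, the Young split with parameter $\gamma$ separating signal from martingale noise, and the restarting argument are all exactly what the paper does in \S\ref{sec:stoc_bilinear}. Where your sketch needs repair is the stochastic nonexpansiveness step, in two places.

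First, the bound you aim for, $\EE\|\bz_k-\zstar\|^2 \lesssim \|\bz_0-\zstar\|^2 + \sigma_H^2/L_H^2$ uniformly in $k$, is too strong: with constant stepsize $\eta_k=\Theta(1/L_H)$ each step contributes an irreducible $\eta_k^2\sigma_H^2$ term, and these accumulate linearly. The paper's Lemma~\ref{lem:stoc_bound_bilinear} gives $\EE\|\bz_k-\zstar\|^2 \le \|\bz_0-\zstar\|^2 + \eta_k^2(2+\tfrac1\beta)K\sigma_H^2$, i.e.\ a floor growing like $K\sigma_H^2/L_H^2$. This extra $K$ is harmless downstream (it gets divided by $(K+1)^2$ and produces the $\sigma_H^2/(K+1)$ summand in the final bound), but you should not claim the $K$-free version.

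Second, the skew-symmetry identity $\langle H(\bz),\bz-\zstar\rangle=0$ does not directly kill the cross term in your expansion: what appears is $\langle H(\bz_{k+\frac12}),\bz_k-\zstar\rangle$, where the argument of $H$ and the displacement disagree, so the inner product is not zero. You would have to split $\bz_k-\zstar=(\bz_{k+\frac12}-\zstar)+(\bz_k-\bz_{k+\frac12})$ and then control the residual $\eta_k\langle H(\bz_{k+\frac12}),\tilde H(\bz_{k-\frac12};\zeta_{k-\frac12})\rangle$, which reintroduces precisely the past-extrapolation coupling. The paper sidesteps this by invoking Proposition~\ref{prop:PRecursion} (the three-point identity) applied to the pair of updates~\eqref{eq:agog_extra_step1_stoc}--\eqref{eq:agog_extra_step2_stoc}: this places $\langle \tilde H(\bz_{k+\frac12}),\bz_{k+\frac12}-\zstar\rangle$ on the left (where monotonicity, or in the bilinear case exact skew-symmetry, applies directly) and packages the extrapolation error as $\tfrac{\eta_k^2}{2}\|\tilde H(\bz_{k+\frac12})-\tilde H(\bz_{k-\frac12})\|^2$, which is then handled by Lemma~\ref{lem:stoc_nablaH} and the recursive trick $\|\bz_{k+\frac12}-\bz_{k-\frac12}\|^2\le 4\|\bz_{k+\frac12}-\bz_k\|^2+4\eta_{k-1}^2L_H^2\|\bz_{k-\frac12}-\bz_{k-\frac32}\|^2-\|\bz_{k+\frac12}-\bz_{k-\frac12}\|^2$. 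Once you replace your direct expansion by this route, the rest of your argument goes through unchanged.
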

We begin by proving the following lemma, which is the stochastic version of Lemma~\ref{lem:deter_bound_bilinear}. It is worth noting that when setting $\sigma_H = 0$ and $\beta = 0$ in Lemma~\ref{lem:stoc_bound_bilinear} below, it reduces to the non-expansiveness of deterministic iterates~\eqref{eq:AG-OG-I}.
Moreover, Lemma~\ref{lem:stoc_bound_bilinear} holds for any monotonic $H(\cdot)$.

\begin{lemma}[Bounded Iterates (Stochastic)]\label{lem:stoc_bound_bilinear}
Following~\eqref{eq:AG-OG-I_stoc}, for any $\beta > 0$, taking $\eta_k = \frac{1}{2L_H\sqrt{(1 + \beta)}}$ at any iterate $k < K$, $\bz_k$ stays within the region defined by the initialization $\bz_0$:
\begin{align*}
\norm{\bz_k - \zstar}^2 
\leq 
\norm{\bz_0 - \zstar}^2
+
\eta_k^2\left(2 + \frac{1}{\beta}\right)K\sigma_H^2,
\end{align*}
where we recall that $\zstar$ denotes the unique solution of Problem~\eqref{eq:Minmax_separable} with $\nabla f, \nabla g = 0$ and $I$ defined in~\eqref{eq:def_H_bilinear}.
\end{lemma}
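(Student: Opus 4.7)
The plan is to adapt the deterministic non-expansiveness proof of Lemma~\ref{lem:deter_bound_bilinear}, inserting noise-handling steps powered by a variant of Lemma~\ref{lem:stoc_nablaH}. Write $\tilde{H}_{j+1/2} := \tilde{H}(\bz_{j+1/2}; \zeta_{j+1/2})$ and $\delta_{j+1/2} := \tilde{H}_{j+1/2} - H(\bz_{j+1/2})$, so that $\delta_{j+1/2}$ is unbiased given the natural filtration with second moment at most $\sigma_H^2$. The first step is to start from $\bz_{k+1} = \bz_k - \eta_k \tilde{H}_{k+1/2}$, substitute the identity $\bz_k = \bz_{k+1/2} + \eta_k \tilde{H}_{k-1/2}$, expand $\|\bz_{k+1} - \zstar\|^2$, and use monotonicity of $H$ at $\bz_{k+1/2}$ (together with $H(\zstar) = 0$) to discard the inner product $-2\eta_k \langle \bz_{k+1/2} - \zstar, H(\bz_{k+1/2}) \rangle$. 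Taking conditional expectation kills the unbiased cross term involving $\delta_{k+1/2}$ and yields
\[
\EE \|\bz_{k+1} - \zstar\|^2 \le \EE \|\bz_k - \zstar\|^2 + \eta_k^2 \EE \| H(\bz_{k+1/2}) - \tilde{H}_{k-1/2} \|^2 - \eta_k^2 \EE \| \tilde{H}_{k-1/2} \|^2 + \eta_k^2 \sigma_H^2.
\]

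The second step is to decompose $H(\bz_{k+1/2}) - \tilde{H}_{k-1/2} = (H(\bz_{k+1/2}) - H(\bz_{k-1/2})) - \delta_{k-1/2}$ and apply Young's inequality with parameter $\beta$ to upper-bound its expected squared norm by $(1+\beta) L_H^2 \EE\|\bz_{k+1/2} - \bz_{k-1/2}\|^2 + (1 + 1/\beta) \sigma_H^2$. With $\eta_k = \frac{1}{2L_H\sqrt{1+\beta}}$ one has $\eta_k^2 (1+\beta) L_H^2 = 1/4$, so the Lipschitz inflation is absorbed by the favorable $-\eta_k^2 \EE\|\tilde{H}_{k-1/2}\|^2$ term once $\|\bz_{k+1/2} - \bz_{k-1/2}\|^2$ is related back to $\|\tilde{H}_{k-1/2}\|^2$ and $\|\tilde{H}_{k-3/2}\|^2$ via the update rule. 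What remains after this cancellation is only the variance contribution $\eta_k^2 (2 + 1/\beta) \sigma_H^2$, and telescoping from $k = 0$ to $k-1$ produces the claimed bound $\|\bz_0 - \zstar\|^2 + \eta_k^2 (2 + 1/\beta) K \sigma_H^2$.

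The main obstacle I anticipate is engineering a clean telescoping despite the coupling between consecutive stochastic gradient norms: a naive triangle-inequality bound on $\|\bz_{k+1/2} - \bz_{k-1/2}\|^2$ brings in $\|\tilde{H}_{k-3/2}\|^2$ terms whose contributions could grow faster than linearly in $K$ if handled carelessly. The resolution, mirroring the deterministic proof of Lemma~\ref{lem:deter_bound_bilinear}, is to define a Lyapunov potential of the form $\Phi_k := \|\bz_k - \zstar\|^2 + c\, \eta_k^2 \|\tilde{H}_{k-1/2}\|^2$ with constant $c$ chosen so that the squared stochastic gradient at step $k$ cancels exactly its counterpart appearing in the Lipschitz expansion at step $k+1$. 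The initialization $\bz_0 = \bz_{-1/2}$ ensures $\Phi_0 = \|\bz_0 - \zstar\|^2$, so bounding $\|\bz_k - \zstar\|^2 \le \Phi_k$ at the end recovers the stated inequality, and the rest is routine bookkeeping.
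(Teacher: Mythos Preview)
Your first two steps are correct and in fact reproduce the paper's one-step inequality exactly: once you note $\|\bz_k-\bz_{k+\frac12}\|^2=\eta^2\|\tilde H_{k-\frac12}\|^2$, your display coincides with what the paper obtains from Proposition~\ref{prop:PRecursion} plus Lemma~\ref{lem:stoc_nablaH}, namely
\[
\EE\|\bz_{k+1}-\zstar\|^2 \le \EE\|\bz_k-\zstar\|^2 + \tfrac14\,\EE\|\bz_{k+\frac12}-\bz_{k-\frac12}\|^2 - \eta^2\,\EE\|\tilde H_{k-\frac12}\|^2 + \eta^2\bigl(2+\tfrac1\beta\bigr)\sigma_H^2 .
\]

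The gap is in the Lyapunov you propose for the telescoping. The update rule gives $\bz_{k+\frac12}-\bz_{k-\frac12}=-\eta(2\tilde H_{k-\frac12}-\tilde H_{k-\frac32})$, so the residual after cancellation is \emph{exactly}
\[
\tfrac14\|\bz_{k+\frac12}-\bz_{k-\frac12}\|^2 - \eta^2\|\tilde H_{k-\frac12}\|^2
= -\eta^2\langle\tilde H_{k-\frac12},\tilde H_{k-\frac32}\rangle + \tfrac{\eta^2}{4}\|\tilde H_{k-\frac32}\|^2 .
\]
A potential $\Phi_k=\|\bz_k-\zstar\|^2+c\,\eta^2\|\tilde H_{j}\|^2$ (for any fixed lag $j\in\{k-\tfrac12,k-\tfrac32\}$ and any $c>0$) cannot absorb the indefinite cross term: demanding $\EE\Phi_{k+1}\le\EE\Phi_k+\eta^2(2+\tfrac1\beta)\sigma_H^2$ forces the quadratic form $c\|a\|^2-\langle a,b\rangle+(\tfrac14-c)\|b\|^2$ to be nonpositive for all $a,b$, which fails because the $\|a\|^2$-coefficient is positive. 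With $c\le 0$ the potential no longer dominates $\|\bz_k-\zstar\|^2$. So this choice does not close, and the anticipated ``routine bookkeeping'' does not go through.

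The paper's resolution is to put a \emph{half-iterate difference}, not a gradient norm, into the potential: it uses $\Phi_k=\|\bz_k-\zstar\|^2+\tfrac14\|\bz_{k-\frac12}-\bz_{k-\frac32}\|^2$. Starting from the same one-step inequality, it splits $\|\bz_{k+\frac12}-\bz_{k-\frac12}\|^2\le 2\|\bz_{k+\frac12}-\bz_k\|^2+2\|\bz_k-\bz_{k-\frac12}\|^2$; the first piece is $2\eta^2\|\tilde H_{k-\frac12}\|^2$ and is killed by the negative $-\|\bz_k-\bz_{k+\frac12}\|^2$, while the second piece $\|\bz_k-\bz_{k-\frac12}\|^2$ is handled by one more application of Lemma~\ref{lem:stoc_nablaH}, producing $\tfrac14\|\bz_{k-\frac12}-\bz_{k-\frac32}\|^2$ plus variance. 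That is what makes $\Phi_k$ monotone up to the per-step $\eta^2(2+\tfrac1\beta)\sigma_H^2$ term. In your notation, the right object to store in the potential is $\eta^2\|\tilde H_{k-\frac12}-\tilde H_{k-\frac32}\|^2$ (equivalently $\|\bz_{k-\frac12}-\bz_{k-\frac32}\|^2$), not $\eta^2\|\tilde H_{k-\frac12}\|^2$.
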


\begin{proof}[Proof of Lemma~\ref{lem:stoc_bound_bilinear}]
The optimal condition of the problem yields 
$H(\zstar) = 0$ for $\zstar$ being the solution of the VI.
By the monotonicity of $H(\cdot)$, let $\zholder = \bz_{k+\frac12}$ and $\zholder' =  \omega_{\bz} $ in~\eqref{eq:H_prop}, we have that 
\begin{align}\label{eq:H_prop_yields_1_stoc}
\left\langle 
H(\bz_{k+\frac12}) - H(\omega_{\bz}), \bz_{k+\frac12}- \omega_{\bz}
\right\rangle
\geq 0
,\qquad
\forall \omega_{\bz} \in \cZ
.
\end{align}
Let $\foneholder = \zholder_{k+\frac12}$, $\ftwoholder = \zholder_{k+1}$, $\thetaholder = \zholder_k$, $\boneholder = \eta_k \tilde{H} (\zholder_{k-\frac12}; \zeta_{k-\frac12})$, $\btwoholder = \eta_k \tilde{H} (\zholder_{k+\frac12}; \zeta_{k+\frac12})$ and $\zholder = \omega_z$ in Proposition~\ref{prop:PRecursion}, we have
\begin{align}
&
\eta_k \left\langle 
\tilde{H}(\zholder_{k+\frac12}; \zeta_{k+\frac12}),
\zholder_{k+\frac12} - \omega_z
\right\rangle
\notag
\\&\leq 
\frac{\eta_k^2}{2} \norm{\tilde{H}(\bz_{k+\frac12}; \zeta_{k+\frac12}) - \tilde{H}(\bz_{k-\frac12}; \zeta_{k-\frac12})}^2
+
\frac12 \left[ 
\norm{\zholder_k - \omega_z}^2 - \norm{\zholder_{k+1} - \omega_z}^2 - \norm{\zholder_k - \zholder_{k+\frac12}}^2 
\right]
.
\label{eq:ttt_stoc}
\end{align}
Recalling the results in Lemma~\ref{lem:stoc_nablaH} that 
\begin{align*}
\EE \norm{\tilde{H}(\zhp; \zeta_{\ph}) - \tilde{H}(\zhm; \zeta_{\mh})}^2
\leq 
(1 + \beta) L_H^2 \EE\norm{\zhp - \zhm}^2
+
\left(2 + \frac{1}{\beta}\right)\sigma_H^2
.
\end{align*}
Taking expectation over~\eqref{eq:ttt_stoc}, combining it with~\eqref{eq:H_prop_yields_1_stoc} and letting $\omega_{\bz} = \zstar$, we obtain
\begin{align}\label{eq:before}
&0 = \eta_k 
\EE \left\langle 
H (\zstar), \bz_{k+\frac12} - \zstar
\right\rangle \notag
\\&\leq 
\frac{\eta_k^2 (1 + \beta) L_H^2}{2} \EE \norm{\bz_{k+\frac12} - \bz_{k-\frac12}}^2
+
\frac12 \EE \left[ 
\norm{\zholder_k - \zstar}^2 - \norm{\zholder_{k+1} - \zstar}^2 - \norm{\zholder_k - \zholder_{k+\frac12}}^2 
\right]
+
\frac{\eta_k^2\left(2 + \frac{1}{\beta}\right)}{2} \sigma_H^2
.
\end{align}
Next, we move on to estimate $\norm{\zholder_{k+\frac12} - \zholder_{k-\frac12}}^2$. As we know that via Young’s and Cauchy-Schwarz’s inequalities and the update rules~\eqref{eq:agog_extra_step1} and~\eqref{eq:agog_extra_step2}, for all $k \geq 1$
\begin{align*}
\norm{\bz_{k + \frac12} - \bz_{k - \frac12}}^2
&\leq
2\norm{\bz_{k + \frac12} - \bz_k}^2
+
2\norm{\bz_{k} - \bz_{k - \frac12}}^2
\\&\leq
2 \norm{\bz_{k + \frac12} - \bz_{k}}^2
+
2 \eta_{k-1}^2 L_H^2\norm{\bz_{k - \frac12} - \bz_{k - \frac32}}^2
.
\end{align*}
Multiplying both sides by 2 and moving one term to the right hand gives for all $k\ge 1$
\begin{align*}
\norm{\bz_{k + \frac12} - \bz_{k - \frac12}}^2
&\leq 
4 \norm{\bz_{k + \frac12} - \bz_{k}}^2
+
4 \eta_{k-1}^2 L_H^2\norm{\bz_{k - \frac12} - \bz_{k - \frac32}}^2
-
\norm{\bz_{k + \frac12} - \bz_{k - \frac12}}^2
.
\end{align*}
Bringing this into~\eqref{eq:before} and noting that $\eta_{k-1} \leq \frac{1}{2L_H}$ as well as $\eta_k \leq \frac{1}{2L_H}$, we have
\begin{align*}
0
&\leq 
\frac{\eta_k^2 (1 + \beta) L_H^2}{2} \EE \norm{\bz_{k+\frac12} - \bz_{k-\frac12}}^2
+
\frac12 \EE \left[ 
\norm{\zholder_k - \zstar}^2 - \norm{\zholder_{k+1} - \zstar}^2 - \norm{\zholder_k - \zholder_{k+\frac12}}^2    
\right]
+
\frac{\eta_k^2 \left(2 + \frac{1}{\beta}\right)}{2}\sigma_H^2
\\&\leq 
\frac12 \EE \left[ 
\norm{\bz_k - \zstar}^2 - \norm{\bz_{k+1} -\zstar}^2    
\right] 
+
\frac{\eta_k^2(1 + \beta) L_H^2}{2}
\EE \left[ 
\norm{\bz_{k-\frac12} - \bz_{k - \frac32}}^2 - \norm{\bz_{k+\frac12} - \bz_{k - \frac12}}^2    
\right]
\\&~\quad -
\left(
\frac12 - 
2 \eta_k^2 (1 + \beta) L_H^2
\right) \EE \norm{\zholder_k - \zholder_{k+\frac12}}^2 
+
\frac{\eta_k^2\left(2 + \frac{1}{\beta}\right)}{2}\sigma_H^2
.
\end{align*}
Taking $\eta_k$ satisfying $\eta_k \leq \frac{1}{L_H\sqrt{4(1 + \beta)}}$ and by rearraing the above inequality, we have
\begin{align*}
0   &\leq
\frac12 \EE \left[ 
\norm{\bz_k - \zstar}^2 - \norm{\bz_{k+1} - \zstar}^2    
\right] 
+
\frac18
\EE \left[ 
\norm{\bz_{k-\frac12} - \bz_{k - \frac32}}^2 - \norm{\bz_{k+\frac12} - \bz_{k - \frac12}}^2    
\right]
\\&~\quad -
\left(
\frac12 - 
\frac12 
\right) \EE \norm{\zholder_k - \zholder_{k+\frac12}}^2 
+
\frac{\eta_k^2\left(2 + \frac{1}{\beta}\right)}{2}\sigma_H^2
\\&\leq 
\frac12 \EE \left[ 
\norm{\bz_k - \zstar}^2 + \frac14 \norm{\bz_{k-\frac12} - \bz_{k - \frac32}}^2 
-
\norm{\bz_{k+1} - \zstar}^2 - \frac14 \norm{\bz_{k+\frac12} - \bz_{k - \frac12}}^2
\right]
+
\frac{\eta_k^2\left(2 + \frac{1}{\beta}\right)}{2}\sigma_H^2
.
\end{align*}
Rearranging the above inequality and we conclude that 
\begin{align*}
\EE \left[\norm{\zholder_{k+1} - \zstar}^2
+ 
\frac14 \norm{\zholder_{k+\frac12} - \zholder_{k-\frac12}}^2 
\right]
\leq 
\EE \left[
\norm{\zholder_k - \zstar}^2 + \frac14\norm{\zholder_{k-\frac12} - \zholder_{k-\frac32}}^2
\right]
+
\eta_k^2\left(2 + \frac{1}{\beta}\right)\sigma_H^2
.
\end{align*}
Telescoping over $k = 0, 1, \ldots, K-1$ and noting that $\zholder_{-\frac12} = \zholder_{-\frac32} = \zholder_0$ and $\eta_k$ is taken as a constant for the bilinear case, we have 
\begin{align*}
\norm{\zholder_K - \zstar}^2 \leq \norm{\zholder_K - \zstar}^2
+
\frac14 \norm{\zholder_{K-\frac12} - \zholder_{K-\frac32}}^2 
\leq 
\norm{\zholder_0 - \zstar}^2
+
\eta_k^2\left(2 + \frac{1}{\beta}\right)K\sigma_H^2
,
\end{align*}
which concludes our proof of Lemma~\ref{lem:stoc_bound_bilinear}.
\end{proof}
Recalling that we take $\alpha_k = \frac{2}{k+2}$ in~\eqref{eq:agog_ag_step_stoc} of~\eqref{eq:AG-OG-I_stoc}. Thus, we obtain the following
\begin{align*}
\zag_{k+1} &= \frac{k}{k+2} \zag_k + \frac{2}{k+2} \zholder_{k+\frac12}
.
\end{align*}
Subtracting both sides by $\zstar$ and multiplying both sides by $(k+1)(k+2)$, we have
\begin{align*}
(k+1)(k+2) \left(\zag_{k+1} - \zstar\right) 
&=
k(k+1) \left(\zag_k - \zstar\right)
+
2(k+1) \left(\zholder_{k+\frac12} - \zstar\right)
.
\end{align*}
Telescoping over $k = 0, \ldots, K-1$ and we conclude that
\begin{align}\label{eq:recurs_stoc}
K(K+1) \left(\zag_{K} - \zstar \right)
&= 
2 \sum_{k=0}^{K-1} (k+1) \left(\zholder_{k+\frac12} - \zstar \right)
.
\end{align}
Moreover, according to the update rule~\eqref{eq:agog_extra_step2_stoc}, we have that
\begin{align}\label{eq:recurs_2_stoc}
K z_K - \sum_{k=0}^{K-1} z_k 
&=
\sum_{k=0}^{K-1}(k+1)\zholder_{k+1} - (k+1) \zholder_k 
=
\sum_{k=0}^{K-1}\eta_k (k+1) \tilde{H}(\zholder_{k+\frac12}, \zeta_{k+\frac12})\notag
\\&=
\sum_{k=0}^{K-1}\eta_k (k+1) 
\left[H(\bz_{k+\frac12}) + \Delta_h^{k+\frac12}\right]
.
\end{align}
Recalling that $\eta_k = \frac{1}{L_H\sqrt{4(1+\beta)}}$, combining~\eqref{eq:recurs_2_stoc} with~\eqref{eq:recurs_stoc} and taking the squared norm on both sides, we conclude that
\begin{align}
&
\norm{K(K+1) \left(\zag_{K} - \zstar\right)}^2
\notag
\\&= 
\norm{2 \sum_{k=0}^{K-1} (k+1) \left(\zholder_{k+\frac12} - \zstar\right)}^2
\leq 
\frac{1}{\lambda_{\min}(\mathbf{B}^\top \mathbf{B})}\norm{2 \sum_{k=0}^{K-1} (k+1) H(\zholder_{k+\frac12})}^2\notag
\\&=
\frac{16(1 + \beta)(1 + \gamma)L_H^2}{\lambda_{\min}(\mathbf{B}^\top \mathbf{B})} 
\norm{K\bz_K - \sum_{k=0}^{K-1}\bz_k}^2
+
\frac{4(1 + \frac{1}{\gamma})}{\lambda_{\min}(\mathbf{B}^\top \mathbf{B})}
\norm{\sum_{k=0}^{K-1}(k+1) \Delta_h^{k+\frac12}}^2
\notag
\\&\leq 
\frac{16(1+\beta)(1+\gamma)L_H^2}{\lambda_{\min}(\mathbf{B}^\top \mathbf{B})} 
K\cdot 
\sum_{k=0}^{K-1} 
\left[
2\norm{\bz_K - \zstar}^2
+
2\norm{\bz_k - \zstar}^2
\right]
+
\frac{4(1 + \frac{1}{\gamma})}{\lambda_{\min}(\mathbf{B}^\top \mathbf{B})}
\norm{\sum_{k=0}^{K-1}(k+1) \Delta_h^{k+\frac12}}^2
.
\label{eq:OGDA_proof_final}
\end{align}
Dividing both sides of~\eqref{eq:OGDA_proof_final} by $K^2(K+1)^2$ and taking expectation, we have
\begin{align*}
\EE \norm{\zag_K - \zstar}^2 
&\leq 
\frac{16(1+\beta)(1+\gamma)L_H^2}{\lambda_{\min}(\mathbf{B}^\top \mathbf{B}) K(K+1)^2} 
\sum_{k=0}^{K-1} 
\EE \left[
2\norm{\bz_K - \zstar}^2
+
2\norm{\bz_k - \zstar}^2
\right]
\\&~\quad
+
\frac{4(1+\frac{1}{\gamma})}{\lambda_{\min}(\mathbf{B}^\top \mathbf{B})K^2 (K+1)^2}\sum_{k=0}^{K-1} (k+1)^2\EE \norm{\Delta_h^{k+\frac12}}^2
\\&\leq 
\frac{64(1+\beta)(1+\gamma)L_H^2}{\lambda_{\min}(\mathbf{B}^\top \mathbf{B}) (K+1)^2} \left[ \norm{\bz_0 - \zstar}^2 
+ \eta_k^2 (2+\frac{1}{\beta}) K \sigma_H^2
\right]
+
\frac{4(1+\frac{1}{\gamma})\sigma_H^2}{3\lambda_{\min}(\mathbf{B}^\top \mathbf{B})K}
\\&=
\frac{64(1+\beta)(1+\gamma)L_H^2}{\lambda_{\min}(\mathbf{B}^\top \mathbf{B}) (K+1)^2} \norm{\bz_0 - \zstar}^2 
+ \frac{16(1+\gamma) (2+\frac{1}{\beta})}{\lambda_{\min}(\mathbf{B}^\top \mathbf{B}) (K+1)}
\sigma_H^2
+
\frac{4(1+\frac{1}{\gamma})\sigma_H^2}{3\lambda_{\min}(\mathbf{B}^\top \mathbf{B})K}
,
\end{align*}
Minimize over $\beta$, we have
$\beta = \frac{\sigma_H\sqrt{K+1}}{2L_H\norm{\bz_0 - \zstar}}$ and the first two terms become
\begin{align*}
\frac{64(1+\gamma)L_H^2}{\lambda_{\min}(\mathbf{B}^\top \mathbf{B}) (K+1)^2} \norm{\bz_0 - \zstar}^2 
+ \frac{32(1+\gamma)} {\lambda_{\min}(\mathbf{B}^\top \mathbf{B}) (K+1)}
\sigma_H^2
+
\frac{32 (1+\gamma)L_H \sigma_H}{\lambda_{\min}(\mathbf{B}^\top \mathbf{B}) (K+1)^{3/2}} \norm{\bz_0 - \zstar} 
\end{align*}

If we take $\beta = 1$, the above reduces to
\begin{align*}
\EE \norm{\zag_K - \zstar}^2
&\leq 
\frac{128(1+\gamma)L_H^2}{\lambda_{\min}(\mathbf{B}^\top \mathbf{B}) (K+1)^2} \norm{\bz_0 - \zstar}^2 
+ \frac{48(1+\gamma) }{\lambda_{\min}(\mathbf{B}^\top \mathbf{B}) (K+1)}
\sigma_H^2
+
\frac{4(1+\frac{1}{\gamma})\sigma_H^2}{3\lambda_{\min}(\mathbf{B}^\top \mathbf{B})K}
\\&=
(1 + \gamma) \left(\frac{128L_H^2}{\lambda_{\min}(\mathbf{B}^\top \mathbf{B}) (K+1)^2} \norm{\bz_0 - \zstar}^2 
+ \frac{48 }{\lambda_{\min}(\mathbf{B}^\top \mathbf{B}) (K+1)}
\sigma_H^2\right)
+
\frac{4(1+\frac{1}{\gamma})\sigma_H^2}{3\lambda_{\min}(\mathbf{B}^\top \mathbf{B})K}
\end{align*}
Further minimizing over $\gamma$, we conclude that 
so
\begin{align*}
\sqrt{\EE\norm{\zag_K - \zstar}^2}
&\leq 
\sqrt{
\frac{128L_H^2}{\lambda_{\min}(\mathbf{B}^\top \mathbf{B}) (K+1)^2} \norm{\bz_0 - \zstar}^2 
+ \frac{48 }{\lambda_{\min}(\mathbf{B}^\top \mathbf{B}) (K+1)}
\sigma_H^2
}
+
\sqrt{
\frac{4\sigma_H^2}{3\lambda_{\min}(\mathbf{B}^\top \mathbf{B})K}
}
\\&\leq 
\sqrt{
\frac{128L_H^2}{\lambda_{\min}(\mathbf{B}^\top \mathbf{B}) (K+1)^2} \norm{\bz_0 - \zstar}^2
}
+
\sqrt{
\frac{48\sigma_H^2}{\lambda_{\min}(\mathbf{B}^\top \mathbf{B}) (K+1)}
}
+
\sqrt{
\frac{4\sigma_H^2}{3\lambda_{\min}(\mathbf{B}^\top \mathbf{B})K}
}
\\&\leq 
\frac{1}{\sqrt{\lambda_{\min}(\mathbf{B}^\top \mathbf{B})}}\left(
\frac{8\sqrt{2}L_H\norm{\bz_0 - \zstar}}{K+1}
+
\frac{8.083\sigma_H}{\sqrt{K}}
\right)
\end{align*}

By operating restarting techniques the same way as in the explanation of Corollary 3.2 in~\citet{du2022optimal}, we conclude Theorem~\ref{theo:main_bilinear_stoc}.

\section{Proof of Auxiliary Lemmas}

\subsection{Proof of Lemma~\ref{lemm_QuanBdd}}\label{sec_proof,lemm_QuanBdd}
\begin{proof}[Proof of Lemma~\ref{lemm_QuanBdd}]

Since $F(\zholder)$ is $L$-smooth and $\mu$-strongly convex.
For the rest of this proof, we observe that the saddle definition of $\zstar$ satisfies the first-order stationary condition:
\begin{align}\label{FOSC}
\begin{split}
\nabla F(\zstar) 
+  H(\zstar)
=
0
.
\end{split}
\end{align}
Furthermore,  we have
$$\begin{aligned}
&%%%%
F(\zholder) - F(\zstar) 
+ \left\langle H(\zstar), \zholder - \zstar \right\rangle 
\\&\ge%%%%
\left\langle
\nabla F(\zstar), \zholder - \zstar
\right\rangle
+
\frac{\mu}{2}\left\| \zholder - \zstar\right\|^2
+
\left\langle
 H(\zstar), \zholder - \zstar
\right\rangle
%%%%%%%%
\\&=
\left\langle
\nabla F(\zstar) + H(\zstar), \zholder - \zstar
\right\rangle
+
\frac{\mu}{2}\left\| \zholder - \zstar\right\|^2
=
\frac{\mu}{2}\left\| \zholder - \zstar\right\|^2
,
\end{aligned}$$
where in both of the two displays, the inequality holds due to the $\mu$-strong convexity of $F$, and the equality holds due to the first-order stationary condition \eqref{FOSC}.
This completes the proof.
\end{proof}

\subsection{Proof of Lemma~\ref{lem:deter_bound}}\label{sec:proof_lem_deter_bound}
\begin{proof}[Proof of Lemma~\ref{lem:deter_bound}]
Following~\eqref{eq:recurs_solve}, we let $\omega_{\bz} = \zstar$. Due to the non-negativity of $V(\cdot, \zstar)$, we can eliminate the $V$ terms and have:
\begin{align*}
&
\left(2L + \sqrt{\frac{2}{c}}L_H(K+1)\right)
\|\bz_{K} - \zstar\|^2
\leq 
\left(2L + \sqrt{\frac{2}{c}} L_H \right) \norm{\bz_0 - \zstar}^2 
+
\sqrt{\frac{2}{c}} L_H \sum_{k = 0}^{K-1} \left\|\bz_k - \zstar \right\|^2
.
\end{align*}
We adopt a "bootstrapping" argument.%, similar as the Gronwall-type analysis in the Proof of Theorem~\ref{theoConvergence}.
We define $M_K = \max_{0 \leq k \leq K-1} \norm{\bz_k - \zstar}^2$ and taking a maximum on each term on the right hand side of the above inequality, we conclude that
\begin{align*}
\left(2L + \sqrt{\frac{2}{c}}L_H(K+1)\right)
\|\bz_{K} - \zstar\|^2
&\leq 
\left(2L + \sqrt{\frac{2}{c}} L_H \right) M_{K-1}
+
\sqrt{\frac{2}{c}} L_H \sum_{k = 0}^{K-1} M_{K-1}
\\&=
\left(2L + \sqrt{\frac{2}{c}}L_H(K+1)\right)
M_{K-1}
.
\end{align*}
Thus, we know that $\norm{\bz_K - \zstar}^2 \leq M_{K-1}$ and hence $M_{K} = M_{K-1}$ always holds.
That yields $M_K = M_0$, and we conclude the proof of Lemma~\ref{lem:deter_bound}.
\end{proof}

\subsection{Proof of Lemma~\ref{lem:stoc_boundedness}}\label{sec:proof_lem_stoc_boundedness}
\begin{proof}[Proof of Lemma~\ref{lem:stoc_boundedness}]
Starting from~\eqref{eq:stoc_control_V} that
\begin{align*}
&\left[(K + 1)^2 - 1 \right] \EE V(\zag_{K}, \zstar)
+
\frac{K+1}{\eta_{K-1}}
\EE \norm{\bz_{K} - \zstar}^2\notag
\\&\leq 
\frac{2}{\eta_0} \EE \norm{\bz_0 - \zstar}^2 
+
\frac{2}{\sqrt{c}} L_H \sum_{k = 1}^{K-1} \EE \left\|\bz_k - \zstar \right\|^2
+
\sum_{k=0}^{K-1} (k+2) \eta_k \sigma^2
-
\sum_{k = 0}^{K-1} \EE V(\zag_{k+1}, \zstar)
.
\end{align*}
We first omit the $V(\cdot, \cdot)$ terms and have
\begin{align}
&
\frac{K+1}{\eta_{K-1}}
\EE \norm{\bz_{K} - \zstar}^2
\leq 
\frac{2}{\eta_0} \EE \norm{\bz_0 - \zstar}^2 
+
\frac{2}{\sqrt{c}} L_H \sum_{k = 1}^{K-1} \norm{\bz_k - \zstar}^2
+
\sum_{k=0}^{K-1} (k+2) \eta_k \sigma^2
.\label{eq:stoc_bound_midstep}
\end{align}
Rewrite $\norm{\bz_K - \zstar}^2$ as the difference between two summations, we obtain:
\begin{align*}
&
\frac{K+1}{\eta_{K-1}}
\left(\sum_{k=1}^K - \sum_{k=1}^{K-1}\right)
\EE \norm{\bz_k - \omega_{\bz}}^2
\leq 
\frac{2}{\eta_0} \EE \norm{\bz_0 - \zstar}^2 
+
\frac{2}{\sqrt{c}} L_H \sum_{k = 1}^{K-1} \EE \norm{\bz_k - \zstar}^2
+
\sum_{k=0}^{K-1} (k+2) \eta_k \sigma^2
.
\end{align*}
Rearranging the terms and by the first condition~\ref{item:a} that $\frac{k+2}{\eta_k} - \frac{k+1}{\eta_{k-1}} = \frac{2}{\sqrt{c}}L_H$, we have:
\begin{align*}
&
\frac{K+1}{\eta_{K-1}}
\sum_{k=1}^K \EE \norm{\bz_k - \zstar}^2
\leq 
\frac{2}{\eta_0} \EE \norm{\bz_0 - \zstar}^2 
+ 
\frac{K+2}{\eta_K}\sum_{k=1}^{K-1}
\EE \norm{\bz_k - \omega_{\bz}}^2
+
\sum_{k=0}^{K-1} (k+2) \eta_k \sigma^2
.
\end{align*}
To construct a valid iterative rule, we divide both sides of the above inequality with $\frac{(K+1)(K+2)}{\eta_{K-1} \eta_K}$ and obtain the following:
\begin{align*}
&
\frac{\eta_K}{K+2}
\sum_{k=1}^K \EE \norm{\bz_k - \zstar}^2
\leq 
\frac{\eta_{K-1}}{K+1}\sum_{k=1}^{K-1}
\EE \norm{\bz_k - \omega_{\bz}}^2
+
\frac{\eta_{K-1}\eta_{K}}{(K+1)(K+2)}
\left[
\frac{2}{\eta_0} \EE \norm{\bz_0 - \zstar}^2 
+
\sum_{k=0}^{K-1} (k+2) \eta_k \sigma^2
\right]
.
\end{align*}
Here we slightly abuse the notations and use $K$ to denote an arbitrary iteration during the process of the algorithm and use $\cK$ to denote the fixed total number of iterates.
Thus, $
\sum_{k=0}^{K-1}(k+2)\eta_k\sigma^2 \leq \sum_{k=0}^{\cK -1}(k+2)\eta_k\sigma^2
$ is an upper bound that does not change with the choice of $K$. It follows that:
\begin{align*}
\frac{\eta_K}{K+2}
\sum_{k=1}^K \EE \norm{\bz_k - \zstar}^2
&\leq 
\frac{\eta_{K-1}}{K+1}\sum_{k=1}^{K-1}
\EE \norm{\bz_k - \omega_{\bz}}^2
+
\frac{\sqrt{c}}{2L_H}
\left[
\frac{\eta_{K-1}}{K+1} - \frac{\eta_K}{K+2}
\right]\left[
\frac{2}{\eta_0} \EE \norm{\bz_0 - \zstar}^2 
+
\sum_{k=0}^{\cK -1} (k+2) \eta_k \sigma^2
\right]
\\&\leq 
\frac{\sqrt{c}}{2L_H}
\left[
\frac{\eta_0}{2} - \frac{\eta_K}{K+2}
\right]\left[
\frac{2}{\eta_0} \EE \norm{\bz_0 - \zstar}^2 
+
\sum_{k=0}^{\cK-1} (k+2) \eta_k \sigma^2
\right]
.
\end{align*}
Dividing both sides by $\frac{\eta_K}{K+2}$, the result follows:
\begin{align*}
\sum_{k=1}^K \EE \norm{\bz_k - \zstar}^2
&\leq 
\frac{\sqrt{c}}{2L_H}
\left[
\frac{\eta_0(K+2)}{2\eta_K} - 1
\right]\left[
\frac{2}{\eta_0} \EE \norm{\bz_0 - \zstar}^2 
+
\sum_{k=0}^{\cK-1} (k+2) \eta_k \sigma^2
\right]
.
\end{align*}
Bringing this into Eq.~\eqref{eq:stoc_bound_midstep}, we conclude that
\begin{align*}
&
\frac{K+1}{\eta_{K-1}}
\EE\norm{\bz_{K} - \zstar}^2
\leq 
\frac{\eta_0(K+1)}{2\eta_{K-1}}\left[
\frac{2}{\eta_0} \EE\norm{\bz_0 - \zstar}^2 
+
\sum_{k=0}^{\cK -1} (k+2) \eta_k \sigma^2\right]
.
\end{align*}
Dividing both sides by $\frac{K+1}{\eta_{K-1}}$ and we have:
\begin{align*}
&
\EE\norm{\bz_{K} - \zstar}^2
\leq 
\frac{\eta_0}{2}\left[
\frac{2}{\eta_0} \EE \norm{\bz_0 - \zstar}^2 
+
\sum_{k=0}^{\cK-1} (k+2) \eta_k \sigma^2\right]
.
\end{align*}
Now we change back using the notation $K$ to denote the total iterates and $k$ is the iterates indexes, we have
\begin{align*}
&
\EE \norm{\bz_{k} - \zstar}^2
\leq 
\EE \norm{\bz_0 - \zstar}^2 
+
\frac{\eta_0}{2} \sum_{k=0}^{K-1} (k+2) \eta_k \sigma^2
,
\end{align*}
which concludes the proof of~\ref{item:a} of Lemma~\ref{lem:stoc_boundedness}.
Additionally, if $\eta_k \leq \frac{k+2}{D}$ for some quantity $D$, we have
\begin{align*}
\sum_{k=0}^{K-1} (k+2) \eta_k
\leq 
\sum_{k=0}^{K-1} \frac{(k+2)^2}{D}
\leq 
\frac{(K + 1)(K + 2)(2K + 3)}{6D}
.
\end{align*}
We use $A(K) = \sqrt{(K+1)(K+2)(2K+3)/6}$ and noting that $\eta_0 \leq \frac{2}{D}$, we have
\begin{align*}
&
\EE \norm{\bz_{k} - \zstar}^2
\leq 
\EE \norm{\bz_0 - \zstar}^2 
+
\frac{A(K)^2\sigma^2}{D^2}
,
\end{align*}
which concludes our proof of~\ref{item:b}.
And~\ref{item:c} follows by straightforward calculations.
\end{proof}

\subsection{Proof of Lemma~\ref{lem:basic}}\label{sec:proof_lem:basic}
\begin{proof}[Proof of Lemma~\ref{lem:basic}]
Recalling that $F$ is $L$-smooth.
To upper-bound the difference in pointwise primal-dual gap between iterates, we first estimate the difference in function values of $f$ via gradients at the extrapolation point $\zmd_{k}$.
For any given $\bu \in \cZ$, the convexity and $L$-smoothness of $F(\cdot)$ implies that
\begin{align*}
F(\zag_{k+1}) - F(\bu) 
&= 
F(\zag_{k+1}) - F(\zmd_{k}) 
-
\left(F(\bu) - F(\zmd_{k})\right)
\\&\leq 
\left\langle \nabla F(\zmd_{k}), \zag_{k+1} - \zmd_{k}\right\rangle 
+
\frac{L}{2} \left\|\zag_{k+1} - \zmd_{k} \right\|^2
-
\left\langle 
\nabla F(\zmd_{k}), \bu - \zmd_{k}
\right\rangle 
.
\end{align*}
Taking $\bu = \omega_{\bz}$ and $\bu = \zag_k$ respectively, we conclude that
\begin{align}
F(\zag_{k+1}) - F(\omega_{\bz}) 
&\leq 
\left\langle \nabla F(\zmd_{k}), \zag_{k+1} - \zmd_{k}\right\rangle 
+
\frac{L}{2} \left\|\zag_{k+1} - \zmd_{k} \right\|^2
-
\left\langle 
\nabla F(\zmd_{k}), \omega_{\bz} - \zmd_{k}
\right\rangle
\label{eq:first}
,\\
F(\zag_{k+1}) - F(\zag_{k}) 
&\leq 
\left\langle \nabla F(\zmd_{k}), \zag_{k+1} - \zmd_{k}\right\rangle 
+
\frac{L}{2} \left\|\zag_{k+1} - \zmd_{k} \right\|^2
-
\left\langle \nabla F(\zmd_{k}), \zag_{k} - \zmd_{k}\right\rangle 
.
\label{eq:second}
\end{align}
Multiplying~\eqref{eq:first} by $\alpha_k$ and~\eqref{eq:second} by $(1 - \alpha_k)$ and adding them up, we have
\begin{align}
F(\zag_{k+1}) - \alpha_k F(\omega_{\bz}) - (1 - \alpha_k) F(\zag_{k}) 
&\leq 
\left\langle 
\nabla F(\zmd_k), 
\zag_{k+1} - (1 - \alpha_k) \zag_k 
-
\alpha_k\omega_{\bz}
\right\rangle 
+
\frac{L}{2} \norm{\zag_{k+1} - \zmd_k}^2 \nonumber
\\&=
\underbrace{\alpha_k
\left\langle 
\nabla F(\zmd_{k}), \bz_{k + \frac12} - \omega_{\bz}
\right\rangle 
}_{\mbox{I(a)}}
+
\underbrace{
\frac{L \alpha_k^2}{2} \left\|\bz_{k + \frac12} - \bz_{k} \right\|^2
}_{\mbox{II}}
,
\label{eq:diff}
\end{align}
where by substracting Line~\ref{line:mdupd} from Line~\ref{line:agupd} of Algorithm~\ref{alg:AG_OG} and by Line~\ref{line:agupd} itself, the last equality of~\eqref{eq:diff} follows.

Recalling that $\zag_k$ corresponds to regular iterates and $\zmd_k$ corresponds to the extrapolated iterates of Nesterov's acceleration scheme.
The squared error term $\mbox{II}$ in~\eqref{eq:diff} is brought by gradient evaluated at the extrapolated point instead of the regular point.
Note that if we do an implicit version of Nesterov such that $\zmd_{k - 1} = \zag_k$, this squared term goes to zero, and the convergence analysis would be the same as in OGDA. This could potentially result in a new implicit algorithm with better convergence guarantee.

On the other hand, for the coupling term of the updates, we have
\begin{align}
&
\left\langle 
\Matrix (\omega_{\bz}), \zag_{k+1} - \omega_{\bz}
\right\rangle 
-
(1 - \alpha_k)
\left\langle 
\Matrix (\omega_{\bz}), \zag_{k} - \omega_{\bz}
\right\rangle
=
\alpha_k
\left\langle 
H(\omega_{\bz}), \bz_{k+\frac12} - \omega_{\bz}
\right\rangle 
\leq 
\underbrace{\alpha_k
\left\langle 
\Matrix (\bz_{k + \frac12}), \bz_{k + \frac12} - \omega_{\bz}
\right\rangle
}_{\mbox{I(b)}}
,
\label{eq:diff_H}
\end{align}
where the last equality comes from the monotonicity property of $\Matrix(\cdot)$ that
\begin{align*}
\left\langle 
H(\bz_{k+\frac12}) - H(\omega_{\bz}), 
\bz_{k+\frac12} - \omega_{\bz}
\right\rangle \geq 0
.
\end{align*}
Summing both sides of Eq.~\eqref{eq:diff} and Eq.~\eqref{eq:diff_H} we obtain the following:
\begin{align*}
&
F(\zag_{k+1}) - \alpha_k F(\omega_{\bz}) - (1 - \alpha_k) F(\zag_{k}) 
+
\left\langle 
\Matrix(\omega_{\bz}), \zag_{k+1} - \omega_{\bz}
\right\rangle 
-
(1 - \alpha_k)\left\langle 
\Matrix(\omega_{\bz}), \zag_{k} - w_z
\right\rangle 
\\&\leq 
\underbrace{\alpha_k
\left\langle 
\nabla F(\zmd_{k}) + \Matrix(\bz_{k + \frac12}),
\bz_{k + \frac12} - \omega_{\bz}
\right\rangle 
}_{\mbox{I}}
+
\underbrace{\frac{L \alpha_k^2}{2} 
\left\| \bz_{k + \frac12} - \bz_{k} \right\|^2
}_{\mbox{II}}
,
\end{align*}
where $\mbox{I}$ is the summation of $\mbox{I(a)}$ and $\mbox{I(b)}$.
This concludes our proof of Lemma~\ref{lem:basic} by bringing in the definitions of $V(\zag_{k+1}, \zstar)$ and $V(\zag_k, \zstar)$.
\end{proof}

\subsection{Proof of Lemma~\ref{lem:OGDA_recurs}}\label{sec:proof_lem:OGDA_recurs}
\begin{proof}[Proof of Lemma~\ref{lem:OGDA_recurs}]
We focus on $k=1,\dots,K-1$ since the $k=0$ case holds automatically.
By Young's inequality and Cauchy-Schwarz inequality, we have that
\begin{equation}\label{recursion_init}
\begin{aligned}
&
\left\|\bz_{k+\frac12} - \bz_{k-\frac12}\right\|^2
\leq 
2 \left\|\bz_{k+\frac12} - \bz_k \right\|^2
+
2 \left\|\bz_k - \bz_{k-\frac12} \right\|^2
\\&\overset{(a)}{\leq}
2 \left\|\bz_{k+\frac12} - \bz_k \right\|^2
+
2 \eta_{k - 1}^2 L_H^2\left\|\bz_{k-\frac12} - \bz_{k - \frac32} \right\|^2
\overset{(b)}{\leq} 
2 \left\|\bz_{k+\frac12} - \bz_k \right\|^2
+
c\left\|\bz_{k-\frac12} - \bz_{k - \frac32} \right\|^2
,
\end{aligned}\end{equation}
where $(a)$ is due to Lines~\ref{line:halfupd} and~\ref{line:kupd} of Algorithm~\ref{alg:AG_OG} and the definition of $L_H$, and $(b)$ is due to the condition in Lemma~\ref{lem:OGDA_recurs} that $\eta_k L_H \leq \sqrt{\frac{c}{2}}$.
Recursively applying the above gives~\eqref{eq:OGDA_recurs} which is repeated as:
\begin{equation}\tag{\ref{eq:OGDA_recurs}}
\left\|\bz_{k+\frac12} - \bz_{k-\frac12}\right\|^2
\le
2c^k\sum_{\ell=0}^k c^{-\ell}\left\|\bz_{\ell+\frac12} - \bz_{\ell}\right\|^2
.
\end{equation}
Indeed, from \eqref{recursion_init}
$$\begin{aligned}
c^{-k}\left\|\bz_{k+\frac12} - \bz_{k-\frac12}\right\|^2
-
c^{-(k-1)}\left\|\bz_{k-\frac12} - \bz_{k - \frac32} \right\|^2
&\le
2c^{-k} \left\|\bz_{k+\frac12} - \bz_k \right\|^2
,
\end{aligned}$$
so telescoping over $k = 1, \ldots, K$ gives
$$\begin{aligned}
c^{-K}\left\|\bz_{K+\frac12} - \bz_{K-\frac12}\right\|^2
-
\left\|\bz_{\frac12} - \bz_{-\frac12} \right\|^2
&\le
2\sum_{k=1}^K c^{-k} \left\|\bz_{k+\frac12} - \bz_k \right\|^2
,
\end{aligned}$$
which simply reduces to (due to $\bz_{0} = \bz_{-\frac12}$)
$$\begin{aligned}
c^{-K}\left\|\bz_{K+\frac12} - \bz_{K-\frac12}\right\|^2
&\le
2\sum_{k=1}^K c^{-k} \left\|\bz_{k+\frac12} - \bz_k \right\|^2
+
\left\|\bz_{\frac12} - \bz_0 \right\|^2
\le
2\sum_{k=0}^K c^{-k} \left\|\bz_{k+\frac12} - \bz_k \right\|^2
.
\end{aligned}$$
This gives~\eqref{eq:OGDA_recurs} and the entire Lemma~\ref{lem:OGDA_recurs}.
\end{proof}

\subsection{Proof of Lemma~\ref{lem:stoc_nablaH}}\label{sec:proof_lem:stoc_nablaH}
\begin{proof}[Proof of Lemma~\ref{lem:stoc_nablaH}]
We recall that we denote
\begin{align*}
\Delta_h^{\ph}
=
\tilde{H}(\zhp; \zeta_{\ph})
-
H(\zhp)
,\qquad 
\Delta_f^{k}
=
\nabla \tilde{F}(\zmd_k; \xi_{k})
-
\nabla F(\zmd_k)
.
\end{align*}
Then, we have
\begin{align*}
&
\EE\norm{\tilde{H}(\zhp; \zeta_{\ph}) - \tilde{H}(\zhm; \zeta_{\mh})}^2
=
\EE \norm{H(\bz_{k+\frac12}) - H(\bz_{k-\frac12}) + \Delta_h^{k+\frac12} - \Delta_h^{k - \frac12}}^2
.
\end{align*}
By first taking expectation over $\zeta_{k+\frac12}$ condition on $\bz_{k+\frac12}$ given, we have
\begin{align*}
\text{LHS}&\leq 
\EE \norm{H(\bz_{k+\frac12}) - H(\bz_{k-\frac12}) - \Delta_h^{k-\frac12}}^2
+
\EE \norm{\Delta_h^{k+\frac12}}^2 \notag
\\&\leq
(1+\beta)\EE \norm{H(\bz_{k+\frac12}) - H(\bz_{k-\frac12})}^2 + (1+\frac{1}{\beta})\EE\norm{\Delta_h^{k-\frac12}}^2
+
\EE \norm{\Delta_h^{k+\frac12}}^2  \notag
\\&\leq 
(1+\beta)L_H^2\EE \norm{\bz_{k+\frac12} - \bz_{k-\frac12}}^2 + (1+\frac{1}{\beta})\EE\norm{\Delta_h^{k-\frac12}}^2
+
\EE \norm{\Delta_h^{k+\frac12}}^2
.
\end{align*}
Recalling that by Assumption~\ref{assu:bounded_variance}, $\EE \norm{\Delta_h^{k+\frac12}}^2 \leq \sigma_H^2$ and $\EE \norm{\Delta_h^{k-\frac12}}^2 \leq \sigma_H^2$, we conclude our proof of Lemma~\ref{lem:stoc_nablaH}.
\end{proof}

\subsection{Proof of Lemma~\ref{lem:ineq_stoc}}\label{sec:proof_lem:ineq_stoc}
\begin{proof}[Proof of Lemma~\ref{lem:ineq_stoc}]
By inequality~\eqref{eq:stoc_starting_from}, we have
\begin{align*}
&\EE V(\zag_{k+1}, \omega_{\bz})
-
(1 - \alpha_k) \EE V(\zag_{k}, \omega_{\bz})\notag
\\&\leq 
\frac{\alpha_k\eta_k}{2} \left[2L_H^2 \EE\norm{\bz_{k+\frac12}-\bz_{k-\frac12}}^2 
+ 
3\sigma_H^2\right]
+
\alpha_k\EE \left\langle 
\Delta_f^k
+
\Delta_h^{k+\frac12},
\bz_{k+\frac12} - \omega_{\bz}
\right\rangle \notag
\\&~\quad
+
\frac{L \alpha_k^2}{2} 
\EE \norm{\bz_{k + \frac12} - \bz_{k}}^2
+
\frac{\alpha_k}{2\eta_k} \EE \left[ 
\norm{\bz_{k} - \omega_{\bz}}^2 - \norm{\bz_{k+1} - \omega_{\bz}}^2 - \norm{\bz_{k+\frac12} - \bz_{k}}^2
\right]
\end{align*}
The inner product term can be decomposed into
\begin{align*}
&\EE \left\langle 
\Delta_f^k
+
\Delta_h^{k+\frac12},
\bz_{k+\frac12} - \omega_{\bz}
\right\rangle
\\&=
\EE \left\langle 
\Delta_h^{k+\frac12}, \bz_{k+\frac12} - \omega_{\bz}
\right\rangle 
+
\EE \left\langle 
\Delta_f^k, \bz_k - \omega_{\bz}
\right\rangle 
+
\EE \left\langle 
\Delta_f^k, \bz_{k+\frac12} - \bz_k
\right\rangle 
=
\EE \left\langle 
\Delta_f^k, \bz_{k+\frac12} - \bz_k
\right\rangle 
,
\end{align*}
Where the expectation of the first two terms all equals $0$. Thus, we obtain
\begin{align*}
&\EE V(\zag_{k+1}, \omega_{\bz})
-
(1 - \alpha_k) \EE V(\zag_{k}, \omega_{\bz})\notag
\\&\leq 
\frac{\alpha_k\eta_k}{2} \left[2L_H^2 \EE\norm{\bz_{k+\frac12}-\bz_{k-\frac12}}^2 
+ 
3\sigma_H^2\right]
+
\alpha_k\EE \left\langle 
\Delta_f^k
,
\bz_{k+\frac12} - \bz_k
\right\rangle \notag
\\&~\quad
+
\frac{\alpha_k}{2\eta_k} \EE \left[ 
\norm{\bz_{k} - \omega_{\bz}}^2 - \norm{\bz_{k+1} - \omega_{\bz}}^2 
\right]
- 
\left(\frac{\alpha_k}{2\eta_k}
-
\frac{L \alpha_k^2}{2} 
\right)
\EE \norm{\bz_{k+\frac12} - \bz_{k}}^2
.
\end{align*}
For any $r > 0$, we pair up
\begin{align*}
-\frac{(1 - r) \alpha_k}{2\eta_k} 
\EE \norm{\bz_{k+\frac12} - \bz_k}^2 
+
\alpha_k 
\EE 
\left\langle 
\Delta_f^k, \bz_{k+\frac12} - \bz_k
\right\rangle
\leq 
\frac{\alpha_k \eta_k}{2(1-r)}\EE \norm{\Delta_f^k}^2 
,
\end{align*}
and thus
\begin{align}
&\EE V(\zag_{k+1}, \omega_{\bz})
-
(1 - \alpha_k) \EE V(\zag_{k}, \omega_{\bz})\notag
\\&\leq 
\frac{\alpha_k\eta_k}{2} \left[2L_H^2 \EE\norm{\bz_{k+\frac12}-\bz_{k-\frac12}}^2 
+ 
3\sigma_H^2\right]
+
\frac{\alpha_k\eta_k}{2(1-r)} \EE \norm{\Delta_f^k}^2 \notag
\\&~\quad
+
\frac{\alpha_k}{2\eta_k} \EE \left[ 
\norm{\bz_{k} - \omega_{\bz}}^2 - \norm{\bz_{k+1} - \omega_{\bz}}^2 
\right]
- 
\left(\frac{r\alpha_k}{2\eta_k}
-
\frac{L \alpha_k^2}{2} 
\right)
\EE \norm{\bz_{k+\frac12} - \bz_{k}}^2
.\label{eq:last_18}
\end{align}

Next, we connect $\norm{\bz_{k+\frac12} - \bz_{k-\frac12}}^2$ with the squared norms $\norm{\bz_{\ell+\frac12} - \bz_{\ell}}^2$.
For $\eta_k$ satisfying $\eta_k L_H \leq \frac{\sqrt{c}}{2}$, we have
\begin{equation}\label{eq:recursion_init_stoc}
\begin{aligned}
&\EE \left\|\zhp - \zhm\right\|^2
\leq 
2 \EE \norm{\zhp - \bz_k}^2
+
2 \EE \norm{\bz_k - \zhm}^2
\\&=
2 \EE \norm{\zhp - \bz_k}^2
+
2 \eta_{k-1}^2 \EE \norm{\tilde{H}(\zhm) - \tilde{H}(\bz_{k-\frac32})}^2
\\&=
2 \EE \norm{\zhp - \bz_k}^2
+
2 \eta_{k-1}^2 \EE \norm{H(\zhm) - H(\bz_{k-\frac32}) + \Delta_h^{k-\frac32}}^2     +
2\eta_{k-1}^2\EE \norm{\Delta_h^{k-\frac12}}^2
\\&\leq 
2 \EE \norm{\zhp - \bz_k}^2
+
4 \eta_{k-1}^2 L_H^2 \EE \norm{\zhm - \bz_{k-\frac32}}^2 +
6 \eta_{k-1}^2 \sigma_H^2
\\&=
2 \sum_{\ell=0}^{k} 
c^{k-\ell} 
\EE \norm{\bz_{\ell+\frac12} - \bz_{\ell}}^2
+
6 \sum_{\ell=0}^k c^{k-\ell}\eta_{\ell-1}^2 \sigma_H^2
.
\end{aligned}\end{equation}
Bringing Eq.~\eqref{eq:recursion_init_stoc} into~\eqref{eq:last_18}, we have
\begin{align*}
&\EE V(\zag_{k+1}, \omega_{\bz})
-
(1 - \alpha_k) \EE V(\zag_{k}, \omega_{\bz})
\\&\leq 
\frac{\alpha_k\eta_k}{2} \left[4L_H^2 
\sum_{\ell=0}^{k} 
c^{k-\ell} 
\EE \norm{\bz_{\ell+\frac12} - \bz_{\ell}}^2
+
12L_H^2 \sum_{\ell=0}^k c^{k-\ell}\eta_{\ell-1}^2 \sigma_H^2
+ 
3\sigma_H^2\right]
+
\frac{\alpha_k\eta_k}{2(1-r)} \sigma_F^2
\notag
\\&~\quad
+
\frac{\alpha_k}{2\eta_k} \EE \left[ 
\norm{\bz_{k} - \omega_{\bz}}^2 - \norm{\bz_{k+1} - \omega_{\bz}}^2 
\right]
- 
\left(\frac{r\alpha_k}{2\eta_k}
-
\frac{L \alpha_k^2}{2} 
\right)
\EE \norm{\bz_{k+\frac12} - \bz_{k}}^2
\\&\leq 
\frac{\alpha_k\eta_k}{2} \left[4L_H^2 
\sum_{\ell=0}^{k} 
c^{k-\ell} 
\EE \norm{\bz_{\ell+\frac12} - \bz_{\ell}}^2
+
3 \frac{c}{1-c} \sigma_H^2  
+ 
3\sigma_H^2\right]
+
\frac{\alpha_k\eta_k}{2(1-r)} \sigma_F^2 \notag
\\&~\quad
+
\frac{\alpha_k}{2\eta_k} \EE \left[ 
\norm{\bz_{k} - \omega_{\bz}}^2 - \norm{\bz_{k+1} - \omega_{\bz}}^2 
\right]
- 
\left(\frac{r\alpha_k}{2\eta_k}
-
\frac{L \alpha_k^2}{2} 
\right)
\EE \norm{\bz_{k+\frac12} - \bz_{k}}^2
\\&\leq 
2\alpha_k\eta_k L_H^2 \sum_{\ell=0}^{k} 
c^{k-\ell} 
\EE \norm{\bz_{\ell+\frac12} - \bz_{\ell}}^2
+
\frac{3\alpha_k\eta_k}{2(1-c)} \sigma_H^2 
+
\frac{\alpha_k\eta_k}{2(1-r)}\sigma_F^2
\\&~\quad
+
\frac{\alpha_k}{2\eta_k} \EE \left[ 
\norm{\bz_{k} - \omega_{\bz}}^2 - \norm{\bz_{k+1} - \omega_{\bz}}^2 
\right]
- 
\left(\frac{r\alpha_k}{2\eta_k}
-
\frac{L \alpha_k^2}{2} 
\right)
\EE \norm{\bz_{k+\frac12} - \bz_{k}}^2
,
\end{align*}
which concludes our proof of Lemma~\ref{lem:ineq_stoc}.
\end{proof}

\subsection{Proof of Lemma~\ref{lem:deter_bound_bilinear}}\label{sec:proof_lem_deter_bound_bilinear}
\begin{proof}[Proof of Lemma~\ref{lem:deter_bound_bilinear}]
The optimal condition of the problem yields 
$H(\zstar) = 0$ for $\zstar$ being the solution of the VI.
By the monotonicity of $H(\cdot)$, let $\zholder = \bz_{k+\frac12}$ and $\zholder' =  \omega_{\bz} $ in~\eqref{eq:H_prop}, we have that 
\begin{align}\label{eq:H_prop_yields_1}
\left\langle         H(\bz_{k+\frac12}) - H(\omega_{\bz}), \bz_{k+\frac12}- \omega_{\bz}
\right\rangle \geq 0
,\quad
\forall \omega_{\bz} \in \cZ
.
\end{align}
Let $\foneholder = \zholder_{k+\frac12}$, $\ftwoholder = \zholder_{k+1}$, $\thetaholder = \zholder_k$, $\boneholder = \eta_k H (\zholder_{k-\frac12})$, $\btwoholder = \eta_k H (\zholder_{k+\frac12})$ and $\zholder = \omega_z$ in Proposition~\ref{prop:PRecursion}, we have
\begin{align}
\eta_k \left\langle    H(\zholder_{k+\frac12}),
\zholder_{k+\frac12} - \omega_z
\right\rangle
&\leq 
\frac{\eta_k^2}{2} \norm{H(\bz_{k+\frac12}) - H(\bz_{k-\frac12})}^2
+
\frac12 \left[ 
\norm{\zholder_k - \omega_z}^2 - \norm{\zholder_{k+1} - \omega_z}^2 - \norm{\zholder_k - \zholder_{k+\frac12}}^2 
\right]\notag
\\&\leq 
\frac{\eta_k^2 L_H^2}{2} \norm{\bz_{k+\frac12} - \bz_{k-\frac12}}^2
+
\frac12 \left[ 
\norm{\zholder_k - \omega_z}^2 - \norm{\zholder_{k+1} - \omega_z}^2 - \norm{\zholder_k - \zholder_{k+\frac12}}^2 
\right]\label{eq:ttt}
,
\end{align}
where the last inequality follows by the $L_H$-Lipschitzness of the $H$ operator.
Combining~\eqref{eq:ttt} with~\eqref{eq:H_prop_yields_1}, we obtain
\begin{align}\label{eq:before}
0
=
\eta_k \left\langle 
H(\omega_{\bz}), \bz_{k+\frac12} - \omega_{\bz}
\right\rangle 
\leq 
\frac{\eta_k^2 L_H^2}{2} \norm{\bz_{k+\frac12} - \bz_{k-\frac12}}^2
+
\frac12 \left[ 
\norm{\zholder_k - \omega_z}^2 - \norm{\zholder_{k+1} - \omega_z}^2 - \norm{\zholder_k - \zholder_{k+\frac12}}^2 
\right]
.
\end{align}
Next, we move on to estimate $\norm{\zholder_{k+\frac12} - \zholder_{k-\frac12}}^2$. As we know that via Young’s and Cauchy-Schwarz’s inequalities and the update rules~\eqref{eq:agog_extra_step1} and~\eqref{eq:agog_extra_step2}, for all $k \geq 1$
\begin{align*}
\norm{\bz_{k + \frac12} - \bz_{k - \frac12}}^2
&\leq 
2\norm{\bz_{k + \frac12} - \bz_k}^2
+
2\norm{\bz_{k} - \bz_{k - \frac12}}^2
\\&\leq 
2 \norm{\bz_{k + \frac12} - \bz_{k}}^2
+
2 \eta_{k-1}^2 L_H^2\norm{\bz_{k - \frac12} - \bz_{k - \frac32}}^2
.
\end{align*}
Multiplying both sides by 2 and moving one term to the right hand gives for all $k\ge 1$
\begin{align*}
\norm{\bz_{k + \frac12} - \bz_{k - \frac12}}^2
&\leq 
4 \norm{\bz_{k + \frac12} - \bz_{k}}^2
+
4 \eta_{k-1}^2 L_H^2\norm{\bz_{k - \frac12} - \bz_{k - \frac32}}^2
-
\norm{\bz_{k + \frac12} - \bz_{k - \frac12}}^2
.
\end{align*}
Bringing this into~\eqref{eq:before} and noting that $\eta_{k-1} \leq \frac{1}{2L_H}$ as well as $\eta_k \leq \frac{1}{2L_H}$, we have
\begin{align*}
&
0
\leq 
\frac{\eta_k^2 L_H^2}{2} \norm{\bz_{k+\frac12} - \bz_{k-\frac12}}^2
+
\frac12 \left[ 
\norm{\zholder_k - \omega_z}^2 - \norm{\zholder_{k+1} - \omega_z}^2 - \norm{\zholder_k - \zholder_{k+\frac12}}^2    
\right]
\\&\leq 
\frac12 \left[ 
\norm{\bz_k - \omega_{\bz}}^2 - \norm{\bz_{k+1} - \omega_{\bz}}^2    
\right] 
+
\frac{\eta_k^2 L_H^2}{2}
\left[ 
\norm{\bz_{k-\frac12} - \bz_{k - \frac32}}^2 - \norm{\bz_{k+\frac12} - \bz_{k - \frac12}}^2    
\right]
-
\left(
\frac12 - 
2 \eta_k^2 L_H^2
\right) \norm{\zholder_k - \zholder_{k+\frac12}}^2
\\&\leq 
\frac12 \left[ 
\norm{\bz_k - \omega_{\bz}}^2 + \frac14 \norm{\bz_{k-\frac12} - \bz_{k - \frac32}}^2 
-
\norm{\bz_{k+1} - \omega_{\bz}}^2 - \frac14 \norm{\bz_{k+\frac12} - \bz_{k - \frac12}}^2
\right]
.
\end{align*}
Rearranging the above inequality and take $\omega_{\bz} = \zstar$ and we conclude that 
\begin{align*}
\norm{\zholder_{k+1} - \zstar}^2
+ 
\frac14 \norm{\zholder_{k+\frac12} - \zholder_{k-\frac12}}^2 
\leq 
\norm{\zholder_k - \zstar}^2 + \frac14\norm{\zholder_{k-\frac12} - \zholder_{k-\frac32}}^2
.
\end{align*}
Telescoping over $k = 0, 1, \ldots, K-1$ and noting that $\zholder_{-\frac12} = \zholder_{-\frac32} = \zholder_0$, we have 
\begin{align*}
\norm{\zholder_K - \zstar}^2 \leq \norm{\zholder_K - \zstar}^2
+
\frac14 \norm{\zholder_{K-\frac12} - \zholder_{K-\frac32}}^2 
\leq 
\norm{\zholder_0 - \zstar}
,
\end{align*}
which concludes our proof of Lemma~\ref{lem:deter_bound_bilinear}.
\end{proof}

%%%%%%%%%%%%%%%%%%%%%%%%%%%%%%%%%%%%%%%%%%%%%%%%%%%%%%%%%%%%%%%%%%%%%%%%%%%%%%%
%%%%%%%%%%%%%%%%%%%%%%%%%%%%%%%%%%%%%%%%%%%%%%%%%%%%%%%%%%%%%%%%%%%%%%%%%%%%%%%
\end{document}